\newtheorem*{rep@theorem}{\rep@title}
\newcommand{\newreptheorem}[2]{%
\newenvironment{rep#1}[1]{%
 \def\rep@title{#2 \ref{##1}}%
 \begin{rep@theorem}}%
 {\end{rep@theorem}}}
\newtheorem{theorem}{Theorem}[section]
\newtheorem{proposition}[theorem]{Proposition} 
\newtheorem{observation}[theorem]{Observation} 
\newtheorem{lemma}[theorem]{Lemma}
\newtheorem{cor}[theorem]{Corollary}
\newtheorem{fact}[theorem]{Fact}
\theoremstyle{definition}
\newtheorem{definition}[theorem]{Definition}
\newtheorem{question}[theorem]{Question}
\theoremstyle{remark}
\newtheorem{example}[theorem]{Example} 
\newtheorem{remark}[theorem]{Remark}
\newcommand{\zfc}{\tn{ZFC}}
\newcommand{\zf}{\tn{ZF}}
\newcommand{\ad}{\tn{AD}}
\newcommand{\mc}{\mathcal}
\newcommand{\tn}{\textnormal}
\newcommand{\mbb}{\mathbb}
\newcommand{\proves}{\vdash}
\newcommand{\baire}{{{^\omega}\omega}}
\title[On $(\Sigma^2_1)^{\tn{uB}}$
 absoluteness between V and HOD]
 {On $(\Sigma^2_1)^{\tn{uB}}$
 absoluteness between V and HOD}
\author{Gabriel Goldberg}
\address{
Gabriel Goldberg \\
Evans Hall \\
UC Berkeley\\
Berkeley, CA 94702, U.S.A.}
\email{ggoldberg@berkeley.edu}
\author{Dan Hathaway}
\address{
Dan Hathaway \\
Mathematics Department \\
University of Vermont\\
Burlington, VT 05401, U.S.A.}
\email{Daniel.Hathaway@uvm.edu}
\begin{document}

\begin{abstract}
We put together Woodin’s $\Sigma^2_1$ basis theorem of $\ad^+$
 and Vop\v{e}nka's theorem to conclude the following:
If there is a proper class of Woodin cardinals,
 then every $(\Sigma^2_1)^{\tn{uB}}$ statement that is true in $V$
 is true in $\tn{HOD}$ (Theorem~\ref{intro_main_downwards_result_no_param}).
Moreover, this is true even if we allow a parameter $C \subseteq \mbb{R}$
 such that $C$ and its complement have scales that are $\tn{OD}$
 and universally Baire
 (Theorem~\ref{downward_to_hod_without_pc_woodins}).
We also investigate whether \((\Sigma^2_1)^{\tn{uB}}\) statements are upwards absolute from \(\tn{HOD}\) to \(V\) under large cardinal hypotheses, observing that this is true if \(\tn{HOD}\) has a proper class of Woodin cardinals.
Finally, we discuss
 $(\forall^{\baire})\, (\Sigma^2_1)^{\tn{uB}}$ absoluteness and conclude
 that this much absoluteness between $\tn{HOD}$ and $V$ cannot be
 implied by any large cardinal axiom consistent with the
 axiom ``$V = \tn{Ultimate}$ $L$''
 (Corollary~\ref{lc_con_with_ultl_cannot_imply_super_abs}).
\end{abstract}

\maketitle

\section{Introduction}
Shoenfield's absoluteness theorem states that
any \(\Sigma^1_2\)-property is absolute between the constructible universe and the whole universe of sets. This paper studies the analogous question for the inner model \(\tn{HOD}\).

We begin by proving the folklore result that
assuming Projective Determinacy, HOD is projectively correct. We then seek out the strongest absoluteness result that can be proved for HOD under large cardinal hypotheses in \(V\).

A set $A\subseteq \baire$ is universally Baire if for all compact Hausdorff spaces \(X\) and continuous functions \(\pi: X\to \baire\), $\pi^{-1}[A]$ has the property of Baire. Under large cardinal hypotheses, the universally Baire sets are the sets that are absolutely definable in all generic extensions of the universe of sets. (See the Tree Production Lemma
 \cite[Theorem 4.2]{steel_dm} for a precise statement).

Our main theorem is that assuming the existence of a proper class of Woodin cardinals, 
any \((\Sigma^2_1)^{\tn{uB}}\)-statement is downwards absolute to HOD. 
If HOD itself has a proper class of Woodin cardinals, then upwards absoluteness is true as well. 

Let $\Gamma^\infty$ be the pointclass
 of all universally Baire sets of reals.
Given a set of reals $B$,
 we say that the statement $\varphi$
 is $(\Sigma^2_1)^{\tn{uB}}$
 using $B$ as a parameter iff
 there is a formula $\psi$ such that
 $\varphi$ is equivalent to
 $$(\exists A \in \Gamma^\infty)
 \bigg[
 \langle H(\omega_1), \in, A, B \rangle
 \models \psi(A,B)
 \bigg].$$
We say that a statement $\varphi$
 is $(\Sigma^2_1)^{\tn{uB}}$
 iff it is $(\Sigma^2_1)^{\tn{uB}}$
 using the empty set as a parameter.

Woodin showed that assuming a proper
 class of Woodin cardinals,
 $(\Sigma^2_1)^{\tn{uB}}$ statements
 are absolute between $V$ and every
 (set) forcing extension of $V$.
Our main result is the following:
\begin{reptheorem}
{intro_main_downwards_result_no_param}
Assume there is a proper class
 of Woodin cardinals.
Let $\varphi$ be a true
 $(\Sigma^2_1)^{\tn{uB}}$ statement.
Then $\tn{HOD}$ satisfies $\varphi$.
\end{reptheorem}

The other direction is much easier,
 provided that now $\tn{HOD}$
 has a proper class of Woodin cardinals
 instead of $V$:

\begin{reptheorem}{upwards_theorem}
Assume that $\tn{HOD}$ has a proper class
 of Woodin cardinals.
Let $\varphi$ be a $(\Sigma^2_1)^{\tn{uB}}$
 statement that is true in $\tn{HOD}$.
Then $V$ satisifes $\varphi$.
\end{reptheorem}

\begin{question}
\label{intro_question_upwards_abs_from_woodins_in_v}
If $V$ has a proper class of
 Woodin cardinals and
 $\varphi$ is a
 $(\Sigma^2_1)^{\tn{uB}}$ statement
 that is true in $\tn{HOD}$,
 must $V$ satisfy $\varphi$?
\end{question}

Our main theorem is actually stronger,
 because we allow a certain kind of parameter:
\begin{reptheorem}
{downwards_with_strongly_od_param_with_woodins}
Assume there is a proper class of Woodin
 cardinals.
Let $C$ be a set of reals such that it
 and its complement have scales
 which are both $\tn{OD}$ and $\tn{uB}$.
Let $\varphi(C)$ be a true
 $(\Sigma^2_1)^{\tn{uB}}$ statement
 that uses $C$ as a parameter.
Then $\varphi(C \cap \tn{HOD})$ is true in $\tn{HOD}$.
\end{reptheorem}

The hypothesis that $C$ and its complement
 have scales which are
 $\tn{OD}$ and $\tn{uB}$ is conspicuous.
It is a central concept of this paper.
Here is the main open question we have
 about this notion:

\begin{question}
Assume there is a proper class of
 Woodin cardinals.
Let $C$ be a set of reals that is
 $\tn{uB}$ as witnessed by
 trees in $\tn{HOD}$.
Must $C$ and its complement have
 scales which are \tn{OD} and \tn{uB}?
\end{question}

By Theorem~\ref{conj_down_thm_iff_ub_closed_in_hod},
 if $\tn{HOD}$ has a proper class of
 Woodin cardinals,
 then if $C$ is a set of reals that is
 uB via $\tn{OD}$ trees,
 then $C$ and its complent have scales which
 are \tn{OD} and \tn{uB}.
Thus the question of whether $\tn{HOD}$
 has a proper class of Woodin cardinals
 is related to our open questions
 in this paper.
Let us elaborate:

If $V$ has an extendible cardinal $\kappa$,
 then there is a proper class of Woodin
 cardinals.
If in addition
 the HOD hypothesis holds,
 then each Woodin cardinal greater than
 $\kappa$ is Woodin in HOD.
Hence \tn{HOD}
 has a proper class of Woodin cardinals.

Thus, if there is a universe $V$ with
 an extendible cardinal where
 Theorem~\ref{downwards_with_strongly_od_param_with_woodins}
 \textit{fails} when we replace the hypothesis
 that $C$ and its complement have scales which
 are \tn{OD} and \tn{uB}
 with the hypothesis that $C$ is $\tn{uB}$ via $\tn{OD}$ trees,
 then the HOD hypothesis must fail
 in $V$.

Similarly, if there is a universe $V$
 with an extendible cardinal where
 $(\Sigma^2_1)^{\tn{uB}}$ statements
 are \textit{not} upwards absolute from $\tn{HOD}$ to $V$,
 then the HOD Hypothesis must fail
 in $V$.

Our Theorem~\ref{downward_to_hod_without_pc_woodins}
 implies that
 \tn{HOD} is close to $V$
 in a certain sense,
 even in the case that the
 HOD Hypothesis fails!
The theorem shows that if $V$ has a proper class
 of Woodin cardinals, then
 $$\mbox{``$V$ does not
 transcend $\tn{HOD}$ in terms of what
 mice exist''.}$$
For example, if $V$ has 
 a countable premouse $\mathcal{M}$
 satisfying a certain theory
 $T \in \tn{HOD}$ and there is a
 universally Baire
 $\omega_1$-iteration strategy
 $\Sigma \subseteq \mbb{R}$ for $\mathcal{M}$
 in $V$,
 then there is a premouse
 $\mathcal{M}' \in \tn{HOD}$
 that is countable in $\tn{HOD}$
 such that $\mathcal{M}' \models T$
 and $\tn{HOD}$
 satisfies that there exists a
 universally Baire
 $\omega_1$-iteration strategy
 $\Sigma' \subseteq \mbb{R}$ for $\mathcal{M}'$.

A relative of a statement of the form
 ``the universe contains a certain countable premouse
 with a universally Baire iteration strategy''
 is a statement of the form
 ``$\zfc \proves_\Omega \varphi$''
 (in the sense of $\Omega$ logic).
Let us briefly review $\Omega$-logic here
 (see \cite{omega_logic_primer} for more details).
Let $T$ be a theory
 and $\varphi$ be a statement in the language
 of set theory.
We say that $\varphi$ is an $\Omega$-validity of $T$
 (or is an $\Omega$-truth of $T$)
 and write $T \models_\Omega \varphi$ iff every
 rank initial segment of every (set) forcing
 extension of $V$ which satsifies $T$ also
 satisfies $\varphi$.
Also, we say that a $\Pi_2$ statement $\psi$
 is $\Omega$-valid iff
 $\psi$ is true in every (set) forcing
 extension of $V$.
Note that given a definable theory $T$
 and a statement $\varphi$,
 the statement ``$T \models_\Omega \varphi$''
 is asserting that a certain $\Pi_2$
 statement holds in every (set) forcing
 extension of $V$.

Given a universally Baire set of reals $A \subseteq \mbb{R}$
 and a countable transitive model $M$ of $\zfc$,
 we say that $M$ is strongly $A$-closed iff
 for every $\mbb{P} \in M$ and $G$ that is
 $\mbb{P}$-generic over $M$, we have
 $A \cap M[G] \in M[G]$.
We say that
 $\varphi$ is an $\Omega$-theorem of $T$
 (or $\varphi$ is $\Omega$-provable from $T$)
 and write $T \proves_\Omega \varphi$
 iff there is a universally Baire set of reals $A$
 such that every countable transitive model of $T$
 that is strongly $A$-closed is such that
 $M \models (T \models_\Omega \varphi)$.
The notion of $\Omega$-provability is meant to
 capture the notion of a statement being true in all
 models that are closed under certain mouse operators.

It is true that if $T \proves_\Omega \varphi$,
 then $T \models_\Omega \varphi$;
 this is the \textit{soundness of $\Omega$-logic}.
The $\Omega$ conjecture says that the converse holds,
 assuming there is a proper class of Woodin cardinals.
More precisely, the $\Omega$ conjecture says that
 if there is a proper class of Woodin cardinals, then
 $\zfc \models_\Omega \varphi$ iff
 $\zfc \proves_\Omega \varphi$
 for any statement $\varphi$ in the language of set theory.
The $\Omega$ conjecture is important because it attempts
 to explain why certian $\Pi_2$ statements
 are true in every (set) forcing extension of $V$.
One could also say that the $\Omega$ conjecture
 attempts to show that the statements
 that are provable from large cardinals are exactly
 the statements that follow from mouse existence
 principles.
The $\Omega$-Conjecture is a specific way of saying
 that the universally Baire sets of reals
 capture essential information about $V$.

Theorem~\ref{downward_to_hod_without_pc_woodins}
 shows that if a statement
 is $\Omega$-provable in $V$,
 then it is $\Omega$-provable
 in $\tn{HOD}$
 (provided that $V$ has a proper class
 of Woodin cardinals).
This immediately tells us something
 about the $\Omega$ conjecture:
\begin{repcorollary}
{omega_conj_goes_down}
Assume there is a proper class of Woodin
 cardinals.
If the $\Omega$ conjecture is true in $V$,
 then it is true in $\tn{HOD}$.
\end{repcorollary}

The downwards absoluteness of
 $\Omega$-provability also relates
 to the concept of $V$ being $\varphi$-closed,
 in the sense of \cite{woodin_ch2}
 (when $V$ satisfies the
 $\Omega$ conjecture).
For example, we get the following:
\begin{repcorollary}
{omega_huge_go_down}
Assume there is a proper class of huge cardinals
 and the $\Omega$ conjecture holds.
Then $\tn{HOD}$ satisfies that every set belongs to
 an inner model with a huge cardinal.
\end{repcorollary}

Going beyond
 $(\Sigma^2_1)^{\tn{uB}}$ statements
 we have
 $$(\forall^\baire)(\Sigma^2_1)^{\tn{uB}}$$
 statements.
A statement $\varphi$ is of this form
 iff there is a $(\Sigma^2_1)^{\tn{uB}}$
 formula $\psi$ such that
 $$\varphi :\Leftrightarrow
 (\forall x \in \baire)\, \psi(x).$$
Let us define
 $$\mbox{generic $(\forall^\baire)(\Sigma^2_1)^{\tn{uB}}$ absoluteness}$$
 to mean that
 $(\forall^\baire)(\Sigma^2_1)^{\tn{uB}}$
 statements are absolute between $V$ and its
 (set) forcing extensions.

Because the
 $(\Sigma^2_1)^{\tn{uB}}$ generic absoluteness theorem relativizes to a real parameter,
 it follows that
 if there is a proper class of Woodin
 cardinals and some (set) forcing
 extension of $V$ satisfies a given
 $(\forall^\baire)(\Sigma^2_1)^{\tn{uB}}$
 statement $\varphi$, then $V$
 satisfies $\varphi$.

On the other hand the upwards direction
 need not hold.
Here is a typical example of this:
Let $\Phi_1$ be the statement
 that for every real $x$,
 there exists a countable premouse
 $\mathcal{M}$ containing $x$ such that
 there exists a universally Baire
 $\omega_1$-iteration strategy for $\mathcal{M}$.
Now $\Phi_1$ implies
 $\mbb{R} \subseteq \tn{HOD}$.

We claim that $\Phi_1$ implies that
 generic $(\forall^\baire)(\Sigma^2_1)^{\tn{uB}}$
 absoluteness fails.
Suppose that $V$ satisfies $\Phi_1$
 but $V[G]$ is a set forcing extension
 of $V$ by some $\tn{OD}$
 homogeneous forcing which
 adds a real number
 (such as Cohen forcing).
Since the forcing is OD and homogeneous,
 $\tn{HOD}^{V[G]} \subseteq \tn{HOD} \subseteq V$,
 so every real in $\tn{HOD}^{V[G]}$ is in $V$.
It must be that $V[G]$ does not
 satisfy $\Phi_1$, otherwise
 if $x$ was a real in $V[G]$ but not in
 $V$ then $x$ would be in
 $\tn{HOD}^{V[G]}$ but not in $V$,
 which is impossible.

We would like a weaker statement
 that also implies that
 $(\forall^\baire)(\Sigma^2_1)^{\tn{uB}}$ absoluteness fails.
Here it is:
 Let $\Phi_2$ be the statement that
 for every real number $x$,
 $$\zfc + \mbox{``there exists a
 Woodin cardinal''} \proves_\Omega
 (x \in \tn{HOD}).$$
It is true that
 $\Phi_2$ is $(\forall^\baire)(\Sigma^2_1)^{\tn{uB}}$
 and it also implies that
 generic
 $(\forall^\baire)(\Sigma^2_1)^{\tn{uB}}$ absoluteness fails
 for basically the same reason that
 $\Phi_1$ does
 (see Proposition~\ref{phi2_implies_super_abs_fail}).
Note that $\Phi_1$ implies $\Phi_2$
 (see Proposition~\ref{reals_in_mice_are_omega_reals}).
However, what makes $\Phi_2$ so useful
 is that the axiom ``$V$ = Ultimate $L$''
 implies $\Phi_2$.
That is due to Woodin,
 but for completeness we provide
 a proof.

This tells us information about the
 absoluteness of
 $(\forall^\baire)(\Sigma^2_1)^{\tn{uB}}$
 statements between
 $\tn{HOD}$ and $V$.
\begin{cor}
\label{lc_con_with_ultl_cannot_imply_super_abs}
If the existence of a proper class
 of some kind of large cardinal
 is consistent with the axiom ``$V$ = \tn{Ultimate} $L$'',
 then the existence of such a proper class
 of large cardinals
 in both $V$ and $\tn{HOD}$ cannot imply 
 $(\forall^\baire)(\Sigma^2_1)^{\tn{uB}}$ absoluteness
 between $V$ and $\tn{HOD}$.
\end{cor}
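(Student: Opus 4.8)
The plan is to build, under the consistency hypothesis, a single model in which a proper class of the given large cardinals exists both in $V$ and in $\tn{HOD}$, yet the $(\forall^\baire)(\Sigma^2_1)^{\tn{uB}}$ statement $\Phi_2$ holds in $\tn{HOD}$ while failing in $V$; exhibiting such a model shows (at the level of consistency) that no proof of the implication can exist. Note first that in a model of ``$V = \tn{Ultimate}\ L$'' itself one has $V = \tn{HOD}$, so there is no room for a discrepancy; we therefore force. Let $W$ be a model of ``$V = \tn{Ultimate}\ L$'' together with a proper class of the large cardinals in question (available by the consistency hypothesis), and let $V^\ast = W[G]$ where $G$ is $W$-generic for Cohen forcing. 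Cohen forcing is lightface ordinal definable and weakly homogeneous, and it adds a new real $c \notin W$.

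The first task is to compute $\tn{HOD}^{V^\ast}$. By weak homogeneity and ordinal definability of the forcing, the truth of any ordinal-parameter formula in $V^\ast$ is decided by the weakest condition, so every set of ordinals that is ordinal definable in $V^\ast$ already lies in $\tn{HOD}^W = W$; thus $\tn{HOD}^{V^\ast} \subseteq W$. For the reverse inclusion I would use that Ultimate $L$, like $L$, is parameter-free definable by a formula whose interpretation is generically absolute, so that the class this formula defines in $V^\ast$ is again $W$; relativizing the $\tn{HOD}^W$-definition of each $x \in W$ to this definable copy of $W$ then witnesses $x \in \tn{HOD}^{V^\ast}$, giving $\tn{HOD}^{V^\ast} = W$. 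Now, since $W \models {}$``$V = \tn{Ultimate}\ L$'', Woodin's theorem gives $W \models \Phi_2$, so $\Phi_2$ holds in $\tn{HOD}^{V^\ast}$. On the other hand $\Phi_2$ fails in $V^\ast$: if it held, then applying $\Phi_2$ to the Cohen real $c$ and invoking the soundness of $\Omega$-logic (using that $V^\ast$ has a Woodin cardinal) would yield $c \in \tn{HOD}^{V^\ast} = W$, contradicting $c \notin W$. Finally, Cohen forcing is small, so by the L\'evy--Solovay phenomenon the proper class of large cardinals of $W$ is preserved into $V^\ast$, and it is present in $\tn{HOD}^{V^\ast} = W$ by construction. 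Thus $V^\ast$ witnesses that a proper class of the large cardinals in both $V$ and $\tn{HOD}$ is compatible with the failure of $(\forall^\baire)(\Sigma^2_1)^{\tn{uB}}$ absoluteness between $\tn{HOD}$ and $V$, which is exactly what is required.

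The main obstacle is the computation $\tn{HOD}^{V^\ast} = W$, and specifically the reverse inclusion $W \subseteq \tn{HOD}^{V^\ast}$: for this one must know that Ultimate $L$ is parameter-free definable and that its defining formula is computed the same way in the Cohen extension, so that $W$ reappears as an ordinal-definable class of $V^\ast$. This is the one place where genuine properties of Ultimate $L$ beyond ``$V = \tn{HOD}$'' are used. A secondary point is that the soundness argument refuting $\Phi_2$ in $V^\ast$ requires $V^\ast$ to contain a Woodin cardinal; this is automatic when the large cardinals under consideration are at least Woodin cardinals (as in the cases of interest, such as a proper class of supercompact or extendible cardinals), and in general one simply takes $W$ to carry a proper class of Woodin cardinals in addition to the given large cardinals.
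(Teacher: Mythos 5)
Your proposal is correct and follows essentially the same route as the paper: force with Cohen forcing over a model of ``$V = \tn{Ultimate}$ $L$'' carrying the large cardinals, use Woodin's theorem to get $\Phi_2$ in the ground model, use homogeneity plus the rigidity of Ultimate $L$ to identify $\tn{HOD}$ of the extension with the ground model, and refute $\Phi_2$ in the extension via soundness of $\Omega$-logic applied to the Cohen real. The only difference is cosmetic: where you argue $W \subseteq \tn{HOD}^{V^{\ast}}$ via generic absoluteness of the definition of Ultimate $L$, the paper simply invokes that the axiom implies $\tn{HOD}^{V[G]} = \tn{HOD} = V$; both rest on the same underlying fact about Ultimate $L$, and your remark about ensuring a Woodin cardinal survives into the extension matches the paper's choice of huge cardinals as the working example.
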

\begin{proof}
We will do a proof by example.
Suppose that $V$ satisfies the
 axiom ``$V$ = Ultimate $L$'' but also
 has a proper class of huge cardinals.
Now $V$ satisfies $\Phi_2$.
Let $V[G]$ be a (set) forcing extension of $V$
 by an OD homogeneous forcing $\mbb{P}$
 which adds a real
 (such as Cohen forcing).
Now $\tn{HOD}^{V[G]} \subseteq V$.
In fact, the axiom ``$V$ = Ultimate $L$'' implies that
 $\tn{HOD}^{V[G]} = \tn{HOD} = V$.
By the same argument as the last paragraph,
 $V[G]$ does not satisfy $\Phi_2$.
So now $V$ and $V[G]$ both have a proper
 class of huge cardinals,
 but $V[G]$ satisfies the statement
 $$\mbox{``$\Phi_2$ holds in $\tn{HOD}$ but fails in \tn{V}.''}$$

The reader can see that the fact that we
 had a ``proper class'' of huge cardinals
 was not important, because our $\mbb{P}$
 can be chosen to be Cohen forcing, and
 forcing with $\mbb{P}$ preserves
 \textit{all} large cardinals.
\end{proof}

\section{Preliminaries}

See \cite{feng_ub} for information
 about universally Baire sets of reals.
We will review some of that here.

\begin{definition}
Let $\lambda$ be an ordinal.
A set of reals $A \subseteq \baire$
 is ${<}\lambda$-\textit{universally Baire}
 (or ${<}\lambda$-uB) iff there is
 an ordinal $\eta$ and trees
 $T,S$ on $\omega \times \eta$
 such that $p[T] = A$
 and whenever $G$ is generic over $V$
 by a poset of size $<\lambda$,
 then $$V[G] \models p[T] = \baire\, \backslash\, p[S].$$
\end{definition}

In the definition above, we say that
 $T$ and $S$ project to complements in $V[G]$.
The definition of a set of reals being
 ${\le}\lambda$-universally Baire is similar but
 it holds for all $G$ that are generic
 over $V$ by posets of size $\le \lambda$.

\begin{remark}
\label{remark_equiv_less_lambda_ub}
An equivalent definition $A$ being
 ${<}\lambda$-universally Baire
 is as follows: for every $\gamma < \lambda$
 there is an ordinal $\eta$ and
 are trees $T,S$ on $\omega \times \eta$
 such that $p[T] = A$ and whenever $G$ is generic over $V$
 by a poset of size ${\le}\gamma$, then
 $T$ and $S$ project to complements in $V[G]$.
This equivalence follows by using the Axiom of Choice
 to take the ``disjoint union'' of trees.
See Theorem 4.2 of \cite{steel_dm}
 for an example of this argument.
\end{remark}

\begin{definition}
A set of reals is universally Baire (or \tn{uB})
 iff it is ${<}\lambda$-universally Baire
 for every ordinal $\lambda$.
We write $\Gamma^\infty$ for the collection
 of all universally Baire sets of reals.
\end{definition}

We will abuse notation and talk about subsets $B$
 of $\mbb{R}$ being universally Baire,
 by which we mean $B$ is coded in some standard way
 by some $A \subseteq \baire$ that is universally Baire.

To check if a set of reals is ${\le}\lambda$-universally Baire,
 it might seen that we need to quantify over all posets.
However the homogeneity of the collapsing poset simplies
 matters:
\begin{theorem}
Let $\lambda$ be an infinite cardinal.
A set of reals $A \subseteq \baire$ is
 ${\le}\lambda$-universally Baire iff
 there is an ordinal $\eta$ and trees $S,T$ on
 $\omega \times \eta$ such that $p[T] = A$ and
 $\tn{Col}(\omega, \lambda)$ forces that
 $T$ and $S$ project to complements.
\end{theorem}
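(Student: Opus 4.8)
The plan is to prove the two directions separately, with essentially all of the content lying in the reverse implication.

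For the forward direction, suppose $A$ is ${\le}\lambda$-universally Baire and fix trees $T,S$ on $\omega\times\eta$ witnessing this, so $p[T]=A$ and $T,S$ project to complements after forcing with any poset of size ${\le}\lambda$. Since $\lambda$ is infinite, $\tn{Col}(\omega,\lambda)$ consists of the finite partial functions from $\omega$ to $\lambda$ and hence has cardinality $\lambda$; in particular it is a poset of size ${\le}\lambda$, so by hypothesis it forces that $T$ and $S$ project to complements, which is exactly the right-hand side.

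For the reverse direction, fix $\eta$ and trees $T,S$ on $\omega\times\eta$ with $p[T]=A$ such that $\tn{Col}(\omega,\lambda)$ forces $p[T]=\baire\m p[S]$. I must show that these \emph{same} trees witness that $A$ is ${\le}\lambda$-uB, i.e. that every poset $\mbb{P}$ with $|\mbb{P}|\le\lambda$ forces $T$ and $S$ to project to complements. Let $H$ be $\mbb{P}$-generic over $V$. The engine of the argument is the standard absorption property of the L\'evy collapse: since $|\mbb{P}|\le\lambda$, the product $\mbb{P}\times\tn{Col}(\omega,\lambda)$ is forcing equivalent to $\tn{Col}(\omega,\lambda)$. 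Consequently, if $\bar G$ is $\tn{Col}(\omega,\lambda)$-generic over $V[H]$ (passing to a further forcing extension if necessary), then $V[G]:=V[H][\bar G]$ is a $\tn{Col}(\omega,\lambda)$-generic extension of $V$ with $V[H]\subseteq V[G]$. By the hypothesis on the collapse, $V[G]\models p[T]=\baire\m p[S]$, and I will transfer both halves of this statement down to $V[H]$ using the absoluteness of wellfoundedness between transitive models.

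Concretely, disjointness is coded by a tree: let $U$ be the tree on $\omega\times\eta\times\eta$ whose body projects to $p[T]\cap p[S]$, so that in any transitive model containing $U$ one has $p[T]\cap p[S]=\emptyset$ iff $U$ is wellfounded there. Since $U\in V\subseteq V[H]$ and $U$ is wellfounded in $V[G]$, and wellfoundedness of a relation (equivalently, the existence of an ordinal rank function) is absolute between transitive models containing it, $U$ is wellfounded in $V[H]$; hence $p[T]$ and $p[S]$ are disjoint in $V[H]$. For covering, fix $x\in\baire\cap V[H]$. In $V[G]$ we have $x\in p[T]\cup p[S]$, so one of the section trees $T_x,S_x\in V[H]$ is illfounded in $V[G]$; by the same absoluteness of wellfoundedness that section tree is illfounded already in $V[H]$, giving $x\in p[T]\cup p[S]$ as computed in $V[H]$. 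Thus $V[H]\models p[T]=\baire\m p[S]$, which is what was required.

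The main obstacle is the absorption step, namely that an arbitrary poset of size ${\le}\lambda$ can be swallowed by $\tn{Col}(\omega,\lambda)$, so that every $\mbb{P}$-generic extension sits inside a collapse-generic extension. This is precisely where the homogeneity and universality of the collapsing poset are used, and it is the reason a single poset suffices to verify ${\le}\lambda$-universal Baireness. Once it is in hand, the descent to $V[H]$ is routine, relying only on the fact that wellfoundedness is preserved between $V[H]$ and the larger model $V[G]$; no appeal to the homogeneity of the collapse over $V[H]$ is needed, since a witnessing branch for $x\in p[T]\cup p[S]$ is recovered directly in $V[H]$ from the failure of wellfoundedness there.
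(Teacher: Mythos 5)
Your argument is correct and is essentially the standard proof of this fact: the paper itself does not prove the theorem but cites Theorem 2.1 of Feng--Magidor--Woodin, and the argument there is exactly your absorption-plus-absoluteness scheme (any poset of size at most $\lambda$ is absorbed into $\tn{Col}(\omega,\lambda)$, and then wellfoundedness of the section trees and of the ``common branch'' tree transfers the complementation down to the intermediate extension). The only input you leave unproved, the forcing equivalence of $\mbb{P}\times\tn{Col}(\omega,\lambda)$ with $\tn{Col}(\omega,\lambda)$ for $|\mbb{P}|\le\lambda$, is indeed the standard folklore uniqueness property of the collapse, so there is no gap.
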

\begin{proof}
See Theorem 2.1 of \cite{feng_ub}.
\end{proof}

So to determine if $A \subseteq \baire$
 is ${\le}\lambda$-universally Baire,
 we only need to find \textit{some}
 $G$ that is $\tn{Col}(\omega,\lambda)$-generic over $V$
 that is appropriate.
Here is another property of the $\tn{Col}(\omega,\lambda)$
 poset we will exploit:

\begin{fact}
Let $\lambda$ be an ordinal.
Let $M$ be a transitive model of $\zf$.
Then $\tn{Col}(\omega,\lambda)^M = \tn{Col}(\omega,\lambda)$.
Let $G$ be $\tn{Col}(\omega,\lambda)$-generic over $V$.
Then $G$ is $\tn{Col}(\omega,\lambda)$-generic over $M$.

Also, if $\eta$ is an ordinal and $T,S$ are trees
 on $\omega \times \eta$ in $M$ that project to
 complements in $V[G]$, then they project to
 complements in $M[G]$.
\end{fact}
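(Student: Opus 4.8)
The plan is to treat the three assertions in turn, with essentially all of the content concentrated in the last one.

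For the first assertion I would simply unwind the definition: a condition in $\mathrm{Col}(\omega,\lambda)$ is a finite partial function from $\omega$ into $\lambda$, and ``being a finite partial function from $\omega$ into $\lambda$'' is a $\Delta_0$ (hence absolute) property. Since $M$ is transitive and contains $\omega$ and $\lambda$, every such function lies in $M$ and $M$ recognizes no spurious ones; the ordering (reverse inclusion) is likewise absolute. Hence $\mathrm{Col}(\omega,\lambda)^M$ and $\mathrm{Col}(\omega,\lambda)$ are literally the same partial order.

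For the second assertion I would use the standard fact that genericity over a smaller model is easier to achieve. Given the identity of the two posets, it suffices to check that $G$ meets every dense $D \in M$. For such $D$, density is the statement $(\forall p)(\exists q \le p)\, q \in D$, which is absolute between $M$ and $V$ because $D \in M \subseteq V$ and the poset is the same; so $D$ is dense in $V$ as well. As $G$ is generic over $V$ it meets $D$, and since $G$ is already a filter it is generic over $M$. In particular $M[G]$ is a well-defined transitive model of $\mathrm{ZF}$ with $M[G] \subseteq V[G]$.

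The heart of the argument is the third assertion, and the key tool is the absoluteness of wellfoundedness (Mostowski absoluteness) for trees on ordinals between the transitive $\mathrm{ZF}$-models $M[G]$ and $V[G]$. For a real $x$ and a tree $U$ on $\omega \times \eta$, write $U_x = \{ s \in {}^{<\omega}\eta : (x\restriction|s|, s) \in U \}$, a tree on $\eta$ with the property that $x \in p[U]$ iff $U_x$ is illfounded. Disjointness of $p[T]$ and $p[S]$ in $M[G]$ is immediate: a witnessing pair of branches in $M[G]$ also lies in $V[G]$, contradicting disjointness there. For covering, take $x \in \baire^{M[G]}$ with $x \notin p[T]^{M[G]}$. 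Then $T_x \in M[G]$ is wellfounded in $M[G]$, so by Mostowski absoluteness $T_x$ is wellfounded in $V[G]$, i.e.\ $x \notin p[T]^{V[G]}$. Since $T,S$ project to complements in $V[G]$, we get $x \in p[S]^{V[G]}$, so $S_x$ is illfounded in $V[G]$; by absoluteness again $S_x$ is illfounded in $M[G]$, whence $x \in p[S]^{M[G]}$. Thus in $M[G]$ the sets $p[T]$ and $p[S]$ partition the reals.

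The only point requiring care is the repeated appeal to Mostowski absoluteness: one must confirm that $M[G]$ and $V[G]$ are both transitive models of enough $\mathrm{ZF}$ to run the rank-function argument, and that the trees $T_x, S_x$ belong to $M[G]$ (which holds since $T,S \in M \subseteq M[G]$ and $x \in M[G]$). I expect this to be the only genuine obstacle; once it is in place, the disjointness and covering computations are routine.
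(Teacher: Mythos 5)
Your proof is correct, and it fills in exactly the standard argument the paper has in mind: the paper states this as a Fact with no proof at all, taking for granted the absoluteness of the collapse poset, the density argument for genericity over the inner model, and Mostowski absoluteness of wellfoundedness applied to the section trees $T_x$, $S_x$. Your handling of the covering direction (rank functions upward, branches downward) is the right way to make the last clause precise, so there is nothing to add.
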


So universally Baire sets of reals can be
 reinterpreted in appropriate forcing extensions.
Here is how to reinterpret \textit{any} set of reals,
 but there is no guarantee that this is useful
 if the set of reals is not universally Baire:

\begin{definition}
If \(M\subseteq N\) are models of ZF
and \(A\in M\) is a subset of $\baire$, then the
\textit{canonical extension
 (or reinterpretation) of \(A\) to \(N\)}
is the set \[A^{N}_M = \bigcup \{(p[T])^N : T\in M\text{ and }(p[T])^M = A\}\]
\end{definition}

Changing gears,
 here are some properties of $\tn{HOD}$
 we will use.

\begin{theorem}[Vop\v{e}nka]
\label{theorem_vopenka}
Let $X$ be a set.
Then $X$ is (set) generic over $\tn{HOD}$.
\end{theorem}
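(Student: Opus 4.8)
The plan is to carry out Vop\v{e}nka's classical construction of a Boolean algebra inside $\tn{HOD}$. First I would reduce to the case $X = A \subseteq \kappa$ for some ordinal $\kappa$. Given an arbitrary set $X$, form $t = \tc(\{X\})$, fix in $V$ a bijection between $t$ and some ordinal, and let $A$ be a set of ordinals coding both the membership relation of $t$ and the point corresponding to $X$. If $A \in \tn{HOD}[G]$ for some $G$ set-generic over $\tn{HOD}$, then inside $\tn{HOD}[G]$ one decodes a well-founded extensional relation (well-foundedness being absolute to the transitive model $\tn{HOD}[G]$) whose Mostowski collapse returns $t$, and hence recovers $X$; so $X \in \tn{HOD}[G]$ as well.

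Next I would define the forcing. Working in $V$, let $\mc A$ be the collection of all $\tn{OD}$ subsets of $\p(\kappa)$. As a subcollection of $\p(\p(\kappa))$ it is a set, and since the predicate ``is $\tn{OD}$'' is definable from the ordinal $\kappa$, the collection $\mc A$ is itself $\tn{OD}$; the canonical $\tn{OD}$ well-ordering of the ordinal-definable sets then yields an $\tn{OD}$ bijection between $\mc A$ and some ordinal. Transferring the Boolean operations of intersection, union, and complementation relative to $\p(\kappa)$ along this bijection produces an $\tn{OD}$ Boolean algebra $\mbb B$ whose underlying set and operations all lie in $\tn{HOD}$. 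For a code $b \in \mbb B$ write $\dot b \in \mc A$ for the $\tn{OD}$ subset of $\p(\kappa)$ that it names. The step I expect to be the main obstacle is precisely this bookkeeping: each $\dot b$ is a genuine subset of $\p(\kappa)^V$ and typically does \emph{not} belong to $\tn{HOD}$, so throughout I must keep the structure $\mbb B$ (which lives in $\tn{HOD}$) carefully separated from its intended interpretation $b \mapsto \dot b$ (which lives in $V$).

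Then, given $A \subseteq \kappa$, I would set $G = \{\, b \in \mbb B : A \in \dot b \,\}$. That $G$ is a filter is immediate. For genericity over $\tn{HOD}$, let $\mc D \in \tn{HOD}$ be a maximal antichain of $\mbb B$. Since $\mc D$ is a set and each $\dot b$ is $\tn{OD}$, the set $s = \bigcup\{\, \dot b : b \in \mc D \,\}$ is an $\tn{OD}$ subset of $\p(\kappa)$. If $s \neq \p(\kappa)$, then $\p(\kappa) \m s$ is a nonempty $\tn{OD}$ set, hence a nonzero element of $\mbb B$ disjoint from every $\dot b$ with $b \in \mc D$, contradicting the maximality of the antichain; so $s = \p(\kappa)$. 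As $A \in \p(\kappa)$, there is some $b \in \mc D$ with $A \in \dot b$, that is, $b \in G \cap \mc D$. Hence $G$ meets every maximal antichain of $\mbb B$ lying in $\tn{HOD}$, so $G$ is $\tn{HOD}$-generic.

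Finally I would recover $A$ inside $\tn{HOD}[G]$. For each $\alpha < \kappa$ the set $\{\, Y \in \p(\kappa) : \alpha \in Y \,\}$ is $\tn{OD}$ and so has a code $b_\alpha \in \mbb B$, and by construction $\alpha \in A$ iff $b_\alpha \in G$. Since the map $\alpha \mapsto b_\alpha$ belongs to $\tn{HOD}$, this shows $A \in \tn{HOD}[G]$, which together with the reduction of the first paragraph completes the argument.
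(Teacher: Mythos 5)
Your proposal is correct and is essentially the paper's own proof: the paper simply cites the proof of Theorem 15.46 of Jech, and what you have written out is precisely that classical Vop\v{e}nka argument (code $X$ by a set of ordinals $A\subseteq\kappa$, form the $\tn{OD}$ algebra of $\tn{OD}$ subsets of $\p(\kappa)$ transferred to an ordinal, take $G=\{b: A\in\dot b\}$, and verify genericity via maximal antichains). All the steps, including the key separation of $\mbb B\in\tn{HOD}$ from the interpretation $b\mapsto\dot b\in V$ and the recovery of $A$ from the sets $\{Y:\alpha\in Y\}$, check out.
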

\begin{proof}
See the proof of Theorem 15.46 of
 Jech \cite{jech_book}.
\end{proof}

\begin{remark}
By inspecting the proof of Vop\v{e}nka's Theorem
 we see that for every real $x$,
 there exists a poset $\mbb{P} \in \tn{HOD}$
 such that $x$ is in a $\mbb{P}$-generic extension
 of $\tn{HOD}$,
 and $|\mbb{P}| \le 2^{2^\omega}$.
 \end{remark}

\begin{fact}[Folklore]
\label{when_v_grows_homog_hod_shrinks}
Let $V[G]$ be a (set) forcing extension of $V$
 by a homogeneous $\tn{OD}$ poset.
Then $\tn{HOD}^{V[G]}$ is a ground of $\tn{HOD}$.
\end{fact}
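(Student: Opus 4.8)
The plan is to prove the fact in two movements. First I would establish the inclusion $\tn{HOD}^{V[G]} \subseteq \tn{HOD}$, and then I would sandwich $\tn{HOD}$ between $\tn{HOD}^{V[G]}$ and a single set-generic extension of it, so that the intermediate model theorem (Grigorieff) identifies $\tn{HOD}$ as a forcing extension of $\tn{HOD}^{V[G]}$. Write $W = \tn{HOD}^{V[G]}$ and $H = \tn{HOD} = \tn{HOD}^V$ throughout.

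For the inclusion I would use that $\mbb{P}$ is both $\tn{OD}$ and homogeneous. If $x$ is a set of ordinals with $x \in W$, then $x$ is ordinal definable in $V[G]$, say $x = \{\xi : V[G] \models \phi(\xi, \bar\alpha)\}$. Homogeneity of $\mbb{P}$ forces $\mathbb{1}$ to decide each $\phi(\check\xi, \check{\bar\alpha})$, so $\xi \in x$ iff $\mathbb{1} \Vdash_{\mbb{P}} \phi(\check\xi, \check{\bar\alpha})$; since this forcing relation is definable in $V$ from $\mbb{P}$ and $\mbb{P}$ is $\tn{OD}$, the set $x$ is $\tn{OD}$ in $V$ and hence lies in $H$. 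As $W$ is determined by its sets of ordinals, this gives $W \subseteq H$.

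For the ground direction I would invoke the ground-model definability theorem (Laver, Woodin) to fix a set $P = \p(\gamma)^V$ (for suitable $\gamma \geq |\mbb{P}|$) from which $V$, and therefore the class $H = \tn{HOD}^V$, is uniformly definable in $V[G]$. Choosing in $V[G]$ a set of ordinals $\bar P$ coding $P$, Vop\v{e}nka's Theorem~\ref{theorem_vopenka} applied inside $V[G]$ makes $\bar P$ generic over $W$ for the Vop\v{e}nka algebra $\mathbb{B}_{\bar P} \in W$, so that $W[\bar P]$ is a genuine set-forcing extension of $W$. It then remains to show $H \subseteq W[\bar P]$. Given a set of ordinals $x \in H$, it is $\tn{OD}$ in the inner model $H$ (which satisfies $V = \tn{HOD}$), so there is a formula $\Theta$ with $\xi \in x$ iff $V[G] \models \Theta(\xi, \bar P, \bar\beta)$, using that $H$ is definable in $V[G]$ from $\bar P$. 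The relativized Vop\v{e}nka computation then places $x$ in $W[\bar P]$: for each $\xi$ the set $A_\xi = \{S : V[G] \models \Theta(\xi, S, \bar\beta)\}$ is $\tn{OD}$ in $V[G]$, hence lies in $W$, and $\xi \in x$ iff $A_\xi \in G_{\bar P}$, where $G_{\bar P}$ is the Vop\v{e}nka generic of $\bar P$; thus $x$ is read off from $G_{\bar P}$ by a name in $W$. Since $H$ is generated by its sets of ordinals, $W \subseteq H \subseteq W[\bar P]$, and the intermediate model theorem supplies a complete subalgebra of $\mathbb{B}_{\bar P}$ witnessing $H = W[h]$ for some $W$-generic $h$; that is, $W$ is a ground of $H$.

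The main obstacle I anticipate is the ground direction, and specifically arranging the parameter correctly: the natural ground-model-definability parameter $\p(\gamma)^V$ is a set of sets rather than a set of ordinals, so I must pass to a set-of-ordinals code $\bar P$ before Vop\v{e}nka's theorem applies, and then verify that the ``$A_\xi$'' absoluteness trick still recovers $x$ from the generic even though $\bar P$ itself carries no ordinal-definability. A secondary point requiring care is that $H$ is a proper-class intermediate model, so I must use the class version of the intermediate model theorem rather than its more familiar set-generated form. I note that homogeneity and ordinal-definability of $\mbb{P}$ enter only in the first movement; the sandwich uses merely that $V[G]$ is a set-forcing extension of $V$.
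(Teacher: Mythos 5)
The paper does not prove this fact itself (it cites Corollary 7 of the HOD Dichotomy notes), and your argument is essentially the standard proof behind that citation: homogeneity plus ordinal definability of $\mbb{P}$ gives $\tn{HOD}^{V[G]}\subseteq\tn{HOD}$, and then Laver--Woodin ground-model definability, Vop\v{e}nka's theorem relativized to a set-of-ordinals parameter, and the class form of the intermediate model theorem sandwich $\tn{HOD}$ between $\tn{HOD}^{V[G]}$ and a set-generic extension of it. The only imprecision worth fixing is the claim that $A_\xi$ ``lies in $W$'': as an ordinal-definable subset of $\p(\kappa)^{V[G]}$ it is OD but not in general hereditarily OD in $V[G]$, so it is not literally an element of $W$; rather it corresponds, under the canonical OD isomorphism of the Vop\v{e}nka algebra with a structure in $W$, to a condition in $W$, which is all your argument actually needs.
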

\begin{proof}
See Corollary 7 of \cite{davis_hod_dichotomy}.
\end{proof}


An important concept for this paper
 is sets of reals being universally Baire,
 but ``witnessed by trees in $\tn{HOD}$''.
Here is the precise definition:

\begin{definition}
Let $\lambda$ be an ordinal.
A set of reals \(A\) is \({<}\lambda\)-uB 
 \textit{via \tn{OD} trees} iff
 there are ordinal definable trees \(T\) and \(S\) on
 \(\omega\times \eta\) for some ordinal \(\eta\) 
 such that $A = p[T]$ and
 whenever $G$ is generic over $V$ by a poset
 of size $< \lambda$, then $T$ and $S$
 project to complements in $V[G]$.
In other words for every $\gamma < \lambda$,
 the trees \(T\) and \(S\) project to complements in \(V^{\tn{Col}(\omega,\gamma)}\).

We say a set of reals $A$ is \tn{uB} \textit{via} $\tn{OD}$ \textit{trees}
 iff $A$ is ${<}\lambda$-\tn{uB} via \tn{OD} trees for every
 ordinal $\lambda$.
\end{definition}

\begin{remark}
Just as we commented in
 Remark~\ref{remark_equiv_less_lambda_ub},
 where is an equivalence of a set $B$
 being ${<}\lambda$-uB via OD trees:
 for every $\gamma < \lambda$
 there is an ordinal $\eta$ and
 ordinal definable trees $T$ and $S$
 such that $A = p[T]$
 on $\omega \times \eta$ such that whenever $G$
 is generic over $V$ by a poset of size $\le \gamma$,
 then $T$ and $S$ project to complements in $V[G]$.
Note that here what is relevant is we use the
 Axiom of Choice in $\tn{HOD}$.
\end{remark}

If a set of reals in $\tn{HOD}$
 is ${<}\lambda$-universally Baire in $\tn{HOD}$,
 then that set of reals can be reinterpreted
 in $V$ (using Vop\v{e}nka's theorem)
 and it is ${<}\lambda$-universally Baire in $V$:

\begin{lemma}
\label{ub_in_hod_extends_to_ub_in_v}
    If \(\bar{A}\in \tn{HOD}\) is a \({<}\lambda\)-\tn{uB} set of reals in \(\tn{HOD}\)
    where \(\lambda\) is a strong limit cardinal,
    then \(\bar{A}^{V}_\tn{HOD}\)
    is \({<}\lambda\)-\tn{uB} via $\tn{OD}$ trees.
    \begin{proof}
        Fix \(\gamma < \lambda\)
        and let \(T\) and \(S\) witness that
        \(\bar{A}\) is \(\rho\)-uB in \(\tn{HOD}\)
        where \(\rho = |\mc{P}^{\text{OD}}(\mc{P}(\gamma))|\).
        We claim that \(T\) and \(S\) witness
        that \(A = \bar{A}^{V}_\tn{HOD}\) is 
        \(\gamma\)-uB in \(V\).

        Let us first check that
        \(p[T]= A\). By definition,
        \(p[T]\subseteq A\).
        Suppose \(x\in A\setminus p[T]\).
        Then \(x\in p[S]\) since \(T\) and
        \(S\) project to complements. 
        Since \(x\in A\), there is some \(T'\in \tn{HOD}\)
        such that
        \((p[T'])^{\tn{HOD}} = \bar{A}\) and \(x\in p[T']\).
        Since \(p[S]\cap p[T']\neq \emptyset\),
        by absoluteness \((p[S]\cap p[T'])^\tn{HOD}\neq \emptyset\), which contradicts that \(T'\) and \(S\) project to complements in \(\tn{HOD}\).

        Let us finish by verifying that
        \(T\) and \(S\) project to complements in
        \(V^{\tn{Col}(\omega,\gamma)}\).
        Let \(G\subseteq \tn{Col}(\omega,\gamma)\) be 
        \(V\)-generic. By Vop\v{e}nka's
        Theorem~\ref{theorem_vopenka}, every \(x\in \mathbb R^{V[G]}\) is
        generic over \(\tn{HOD}\) for a forcing of size 
        at most \(\rho\). Since \(T\) and \(S\) witness that \(\bar{A}\) is
        \(\rho\)-uB in \(\tn{HOD}\), it follows that
        \(x\in (p[T]\cup p[S])^{V[G]}\). Thus
        \(T\) and \(S\) project to complements in 
        \(V^{\tn{Col}(\omega,\gamma)}\), as desired.
    \end{proof}
\end{lemma}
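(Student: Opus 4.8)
The plan is to fix $\gamma < \lambda$ and produce, for this single $\gamma$, ordinal definable trees $T$ and $S$ with $p[T] = A := \bar{A}^V_{\tn{HOD}}$ that project to complements in $V^{\tn{Col}(\omega,\gamma)}$; by the equivalent formulation of being $<\lambda$-uB via $\tn{OD}$ trees noted after the definition, carrying this out for every $\gamma < \lambda$ yields the conclusion. I would set $\rho = |\mc{P}^{\tn{OD}}(\mc{P}(\gamma))|$. Since $\gamma < \lambda$ and $\lambda$ is a strong limit cardinal we have $\rho < \lambda$, so the hypothesis that $\bar{A}$ is $<\lambda$-uB in $\tn{HOD}$ furnishes trees $T, S \in \tn{HOD}$ witnessing that $\bar{A}$ is $\le\rho$-uB there. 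Being elements of $\tn{HOD}$, these trees are ordinal definable, so it remains only to verify the two tree equations in the relevant extensions of $V$.

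The heart of the argument, and the step I expect to be the main obstacle, is the claim that every real $x \in \mbb{R}^{V[G]}$, where $G$ is $\tn{Col}(\omega,\gamma)$-generic over $V$, lies in a generic extension of $\tn{HOD}$ (computed in $V$) by a poset of size at most $\rho$. The naive move of applying Vop\v{e}nka's Theorem~\ref{theorem_vopenka} inside $V[G]$ is not quite what we want, since it produces a filter generic over $\tn{HOD}^{V[G]}$ rather than over $\tn{HOD} = \tn{HOD}^V$; and as $\tn{Col}(\omega,\gamma)$ is homogeneous and $\tn{OD}$, the model $\tn{HOD}^{V[G]}$ is merely a ground of $\tn{HOD}$, so genericity over it is strictly weaker than what $T$ and $S$ require. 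Instead I would fix a nice $\tn{Col}(\omega,\gamma)$-name $\dot{x} \in V$ with $\dot{x}^G = x$. Because $|\tn{Col}(\omega,\gamma)| = |\gamma|$, such a name is coded by a subset $a \subseteq \gamma$ lying in $V$, so Vop\v{e}nka's theorem applies to $a$ \emph{in $V$}: letting $\mbb{B}$ be the Vop\v{e}nka algebra of $\tn{OD}$ subsets of $\mc{P}(\gamma)$ (so $|\mbb{B}| = \rho$ and $\mbb{B} \in \tn{HOD}$), the filter $G_a \in V$ is $\tn{HOD}$-generic and $\dot{x} \in \tn{HOD}[G_a]$. Since $G$ is $\tn{Col}(\omega,\gamma)$-generic over $V \supseteq \tn{HOD}[G_a]$, it is generic over $\tn{HOD}[G_a]$, whence $x = \dot{x}^G \in \tn{HOD}[G_a][G]$, a generic extension of $\tn{HOD}$ by $\mbb{B} * \tn{Col}(\omega,\gamma)$, a poset of size $\le \rho \cdot |\gamma| = \rho$. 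This proves the claim.

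With the claim in hand the remaining steps are routine absoluteness arguments. Since $T, S$ witness $\le\rho$-uB in $\tn{HOD}$, they project to complements in $\tn{HOD}[G_a][G]$, so $x \in (p[T] \cup p[S])^{\tn{HOD}[G_a][G]}$, and because having a branch through a fixed tree is upward absolute, $x \in (p[T] \cup p[S])^{V[G]}$; thus $p[T] \cup p[S] \supseteq \mbb{R}^{V[G]}$. Disjointness of $p[T]$ and $p[S]$ in $V[G]$ follows because the tree searching for a common branch is wellfounded in $\tn{HOD}$ and wellfoundedness is absolute. Applying the same covering argument with $G$ trivial (the bound $\rho$ still suffices because $\gamma \ge \omega$ gives $\mc{P}^{\tn{OD}}(\mc{P}(\omega)) \subseteq \mbb{B}$) shows $T$ and $S$ project to complements in $V$ itself; from this one obtains $p[T]^V = A$, the inclusion $p[T]^V \subseteq A$ being immediate from the definition of the canonical extension, and the reverse inclusion following because any $x \in A \setminus p[T]^V$ would lie in $p[S]^V$ and in $p[T']^V$ for some $T' \in \tn{HOD}$ with $(p[T'])^{\tn{HOD}} = \bar{A}$, making $p[S] \cap p[T']$ nonempty in $V$ and hence, by downward absoluteness, in $\tn{HOD}$, contradicting that $S$ and $T'$ project to complements there. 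This completes the verification for the fixed $\gamma$, and hence the proof.
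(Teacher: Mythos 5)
Your proof is correct and follows essentially the same route as the paper's: the same choice of $\rho = |\mc{P}^{\tn{OD}}(\mc{P}(\gamma))|$, the same $T'$-based absoluteness argument for $p[T]=A$, and the same Vop\v{e}nka covering of the reals of $V^{\tn{Col}(\omega,\gamma)}$ by small generic extensions of $\tn{HOD}$. Your extra care in passing to a name $\dot{x}\in V$ coded by a subset of $\gamma$ and applying Vop\v{e}nka in $V$ (rather than in $V[G]$, which would only give genericity over $\tn{HOD}^{V[G]}$) correctly fills in the one step the paper leaves implicit.
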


\begin{lemma}
Fix $A \subseteq \baire$.
The following are equivalent:
\item[1)] $A$ is $\tn{uB}$ via $\tn{OD}$ trees.
\item[2)] There is a set of reals $\bar{A}$
 in $\tn{HOD}$ that is $\tn{uB}$ in $\tn{HOD}$
 such that $A$ is the reinterpretation
 of $\bar{A}$ in $V$.
\end{lemma}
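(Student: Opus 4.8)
The plan is to prove the two implications separately, the backward direction $(2)\Rightarrow(1)$ being immediate from Lemma~\ref{ub_in_hod_extends_to_ub_in_v}. Given $\bar{A}\in\tn{HOD}$ that is \tn{uB} in $\tn{HOD}$ with $A=\bar{A}^V_\tn{HOD}$, observe that $\bar{A}$ is ${<}\lambda$-\tn{uB} in $\tn{HOD}$ for every ordinal $\lambda$, in particular for every strong limit cardinal. For each strong limit cardinal $\lambda$, Lemma~\ref{ub_in_hod_extends_to_ub_in_v} gives that $\bar{A}^V_\tn{HOD}=A$ is ${<}\lambda$-\tn{uB} via \tn{OD} trees. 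Since there is a proper class of strong limit cardinals and ${<}\lambda'$-\tn{uB} via \tn{OD} trees implies ${<}\lambda$-\tn{uB} via \tn{OD} trees whenever $\lambda\le\lambda'$ (the same trees work for the smaller posets), $A$ is ${<}\lambda$-\tn{uB} via \tn{OD} trees for every $\lambda$, i.e. \tn{uB} via \tn{OD} trees.

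For $(1)\Rightarrow(2)$, I would first, for each ordinal $\gamma$, fix \tn{OD} trees $T_\gamma,S_\gamma$ witnessing that $A$ is ${<}\lambda$-\tn{uB} via \tn{OD} trees for some $\lambda>\gamma$; then $T_\gamma,S_\gamma\in\tn{HOD}$, $p[T_\gamma]=A$ in $V$, and $T_\gamma,S_\gamma$ project to complements in $V^{\tn{Col}(\omega,\gamma)}$. The proposed witness is $\bar{A}:=(p[T_\gamma])^{\tn{HOD}}$, and the first point to check is that it is independent of $\gamma$ and of the chosen trees. This rests on the absoluteness of well-foundedness between the models $\tn{HOD}$ and $V$: if $x\in(p[T_\gamma])^{\tn{HOD}}$, then $x\in\tn{HOD}$ and $x\in(p[T_\gamma])^V=A=(p[T_{\gamma'}])^V$, so the section tree $(T_{\gamma'})_x\in\tn{HOD}$ is ill-founded in $V$ and hence, being a set tree, ill-founded in $\tn{HOD}$, giving $x\in(p[T_{\gamma'}])^{\tn{HOD}}$; by symmetry the two $\tn{HOD}$-projections agree. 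To see that $\bar{A}$ is \tn{uB} in $\tn{HOD}$, fix $\gamma$ and take $G$ that is $\tn{Col}(\omega,\gamma)$-generic over $V$: by the Fact on $\tn{Col}(\omega,\gamma)$ above, $G$ is generic over $\tn{HOD}$ and $T_\gamma,S_\gamma$ project to complements in $\tn{HOD}[G]$; by homogeneity of the collapse (Theorem 2.1 of \cite{feng_ub}) this single appropriate generic already shows that $\tn{Col}(\omega,\gamma)^{\tn{HOD}}$ forces $T_\gamma,S_\gamma$ to project to complements, so $\bar{A}$ is ${\le}\gamma$-\tn{uB} in $\tn{HOD}$; as $\gamma$ was arbitrary, $\bar{A}$ is \tn{uB} in $\tn{HOD}$. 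Finally, $\bar{A}^V_\tn{HOD}=A$: the inclusion $A\subseteq\bar{A}^V_\tn{HOD}$ holds since $p[T_\gamma]=A$ in $V$ and $(p[T_\gamma])^{\tn{HOD}}=\bar{A}$, while for the reverse, if $T''\in\tn{HOD}$ has $(p[T''])^{\tn{HOD}}=\bar{A}$ and some $x\in(p[T''])^V\setminus A$, then $x\in(p[S_\gamma])^V$, so $p[T'']\cap p[S_\gamma]\neq\emptyset$ in $V$ and hence in $\tn{HOD}$ by absoluteness, contradicting that $(p[T''])^{\tn{HOD}}=\bar{A}=(p[T_\gamma])^{\tn{HOD}}$ is disjoint from $(p[S_\gamma])^{\tn{HOD}}$.

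I expect the main obstacle to be the forward direction, specifically the fact that a \emph{single} set $\bar{A}\in\tn{HOD}$ must serve as a \tn{uB} witness for \emph{all} collapse sizes $\gamma$ simultaneously, whereas the hypothesis only supplies a separate pair of trees for each $\gamma$. The independence argument via absoluteness of well-foundedness is what glues these pairs together, and the descent of ``projecting to complements'' from $V[G]$ to $\tn{HOD}[G]$ (the Fact) together with the homogeneity of $\tn{Col}(\omega,\gamma)$ is what upgrades complementation in one generic extension to genuine \tn{uB}-ness inside $\tn{HOD}$. Some care is needed to ensure that ``projecting to complements'' is expressed through well-foundedness of set trees lying in $\tn{HOD}$ and that the relevant reals lie in $\tn{HOD}$, so that the absoluteness is applied legitimately.
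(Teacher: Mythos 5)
Your proof is correct and follows essentially the same route as the paper: the backward direction is delegated to Lemma~\ref{ub_in_hod_extends_to_ub_in_v} (applied at cofinally many strong limit cardinals), and the forward direction rests on the descent of ``projecting to complements'' from $V[G]$ to $\tn{HOD}[G]$ together with the homogeneity of $\tn{Col}(\omega,\gamma)$. The paper's own proof of the forward direction is a single sentence; your additional verifications (that $\bar{A}$ is independent of $\gamma$ via absoluteness of well-foundedness, and that $\bar{A}^V_{\tn{HOD}}=A$) correctly fill in details the paper leaves implicit.
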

\begin{proof}
The 2) $\Rightarrow$ 1) direction is the harder one,
 and it follows from Lemma~\ref{ub_in_hod_extends_to_ub_in_v}.

For the 1) $\Rightarrow$ 2) direction,
 if $T,S$ are trees in $\tn{HOD}$ that project to complements
 in a $\tn{Col}(\omega,\lambda)$
 extension $V[G]$ of $V$, then they also project to complements
 in the $\tn{Col}(\omega,\lambda)$ extension
 $\tn{HOD}[G]$ of $\tn{HOD}$.
\end{proof}

The following is a trivial but important observation:
\begin{fact}
Fix $A \subseteq \baire$
 and suppose it is $\tn{uB}$ via $\tn{OD}$ trees.
Suppose $\bar{A}$ is a set of reals in $\tn{HOD}$
 which is $\tn{uB}$ in $\tn{HOD}$ such that
 $A$ is the reinterpretation of $\bar{A}$ in $V$.
Then
 $$\bar{A} = A \cap \tn{HOD}.$$
\end{fact}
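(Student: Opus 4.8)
The plan is to exhibit a single pair of witnessing trees and then reduce both inclusions to the absoluteness of branches through a tree. Since $\bar{A}$ is $\tn{uB}$ in $\tn{HOD}$, fix trees $T, S \in \tn{HOD}$ witnessing this at a sufficiently high level, so that $(p[T])^{\tn{HOD}} = \bar{A}$, $(p[S])^{\tn{HOD}} = \baire^{\tn{HOD}} \m \bar{A}$, and $T, S$ project to complements in $\tn{HOD}$ and in its small generic extensions. Applying the proof of Lemma~\ref{ub_in_hod_extends_to_ub_in_v} to this pair, $(p[T])^V = \bar{A}^V_{\tn{HOD}} = A$ and $T, S$ project to complements in the corresponding small generic extensions of $V$. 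In particular $(p[T])^V$ and $(p[S])^V$ are disjoint in $V$: any common point would be witnessed by branches lying in $V$, hence in any such extension $V[G]$, contradicting that $T, S$ project to complements there.

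The key observation, used in both directions, is that membership in the projection of a tree is upward absolute from $\tn{HOD}$ to $V$. Indeed, if $x \in (p[T])^{\tn{HOD}}$ then a witnessing branch $f \in \tn{HOD}$ also lies in $V$, so $x \in (p[T])^V$; the same applies to $S$. For the inclusion $\bar{A} \subseteq A \cap \tn{HOD}$, take $x \in \bar{A}$: then $x \in \tn{HOD}$ because $\tn{HOD}$ is transitive and $\bar{A} \in \tn{HOD}$, and $x \in (p[T])^{\tn{HOD}} \subseteq (p[T])^V = A$.

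For the reverse inclusion $A \cap \tn{HOD} \subseteq \bar{A}$, suppose $x \in A \cap \tn{HOD}$ but $x \notin \bar{A}$. Then $x$ is a real of $\tn{HOD}$ outside $(p[T])^{\tn{HOD}} = \bar{A}$, so, as $T$ and $S$ project to complements in $\tn{HOD}$, we get $x \in (p[S])^{\tn{HOD}}$; by upward absoluteness $x \in (p[S])^V$. But $x \in A = (p[T])^V$, and $(p[T])^V \cap (p[S])^V = \emptyset$, a contradiction. Hence $x \in \bar{A}$. I do not expect a serious obstacle here; the only point requiring care is that disjointness of the projections must be read off in $V$ itself rather than in a generic extension, which is exactly the downward-absoluteness remark recorded in the first paragraph.
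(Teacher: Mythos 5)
The paper states this Fact without proof (calling it a trivial observation), and your argument supplies exactly the intended details: both inclusions follow from the absoluteness of illfoundedness/branch-existence for trees in $\tn{HOD}$, together with the identification $(p[T])^V = A$ from the proof of Lemma~\ref{ub_in_hod_extends_to_ub_in_v}. Your proof is correct as written.
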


Next, Woodin proved that if there is a proper class of
 Woodin cardinals, then if $G$ is generic over $V$,
 then there exists an elementary embedding
 $j : L(\mbb{R}) \to L(\mbb{R})^{V[G]}$.
We need a generalization of this result
 (Theorem~\ref{sem_engine}),
 also do to Woodin.

\begin{definition}
\label{def_enriched}
Consider the collection $\Gamma^\infty$
 of all universally Baire sets of reals.
We say the following:
\begin{itemize}
\item[1)] $\Gamma^\infty$ is \textit{closed under scales}
 iff every set in $\Gamma^\infty$ has a scale
 each norm of which is in $\Gamma^\infty$.
\item[2)] $\Gamma^\infty$ is \textit{closed under sharps} iff for each $A \in \Gamma^\infty$,
 $(A,\mathbb{R})^\# \in \Gamma^\infty$.
\item[3)] $\Gamma^\infty$ is \textit{closed under projections} iff 
 every set of reals that is projective
 in some $A \in \Gamma^\infty$ is in $\Gamma^\infty$.
\item[4)] $\Gamma^\infty$ is \textit{strongly determined}
 iff for every $A \in \Gamma^\infty$,
 $L(A,\mathbb{R}) \models \tn{AD}^+$.
\end{itemize}
\end{definition}

\begin{fact}[Martin, Steel, Woodin]
If there is a proper class of Woodin cardinals,
 then $\Gamma^\infty$ is strongly determined
 and closed under scales, sharps, and projections.
\end{fact}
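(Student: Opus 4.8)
The plan is to reduce all four assertions to the foundational representation theorem of Martin, Steel, and Woodin: under a proper class of Woodin cardinals a set of reals is universally Baire if and only if it is $\infty$-homogeneously Suslin, i.e.\ $\kappa$-homogeneously Suslin for every $\kappa$. Write $\tn{Hom}_\infty$ for the latter pointclass. The forward direction combines the Martin--Steel construction of homogeneous trees from Woodin cardinals with Woodin's genericity iterations, which upgrade homogeneity so that it persists in all generic extensions; the converse is the easier tree-absoluteness argument. Granting $\Gamma^\infty = \tn{Hom}_\infty$, the three closure properties become instances of the classical calculus of homogeneous Suslin representations, and strong determinacy follows from the derived model theorem.

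First I would establish closure under projections (item 3), which is really closure of $\tn{Hom}_\infty$ under the basic projective operations. Complementation is built into the symmetric definition of uB. For real quantification, I would invoke the Martin--Steel propagation lemma: if $A$ is homogeneously Suslin with trees below a Woodin cardinal $\delta$, then $\neg\exists^{\baire}A$ is again homogeneously Suslin with trees below $\delta$. Since the proper class of Woodin cardinals supplies arbitrarily large such $\delta$, a set that is $\kappa$-homogeneously Suslin for all $\kappa$ has complement and real-quantifier images that are again $\kappa$-homogeneously Suslin for all $\kappa$; hence $\tn{Hom}_\infty$ is closed under $\neg$ and $\exists^{\baire}$. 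As the projective hierarchy over $A$ uses only finitely many alternations, every set projective in a uB set lies in $\tn{Hom}_\infty = \Gamma^\infty$.

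For closure under scales (item 1), I would read the canonical scale off a homogeneous tree $T$ with $p[T]=A$: the norms given by the leftmost-branch ordering of $T$ are each $\Delta$-definable from $A$ together with $T$ and its homogeneity system, hence projective in $A$, and so each norm lies in $\Gamma^\infty$ by the previous paragraph. Closure under sharps (item 2) I would handle by Woodin's theorem that, under a proper class of Woodins, the sharp of a uB set is uB: $(A,\mbb{R})^\#$ is captured by trees built from the indiscernibles of $L(A,\mbb{R})$, whose correctness in every generic extension is guaranteed by the persistence of the homogeneous representation of $A$. These two items are comparatively routine once the representation theorem and closure under projections are in hand.

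The main obstacle, and by far the deepest ingredient, is strong determinacy (item 4): $L(A,\mbb{R}) \models \tn{AD}^+$ for every uB $A$. Mere determinacy of games whose payoff is projective in $A$ already follows from the homogeneous Suslin representation via Martin's theorem, but $\tn{AD}^+$ is a genuinely stronger hypothesis---it packages ordinal determinacy and $\Theta$-reflection, and it is not in general downward absolute. Here I would run Woodin's derived model theorem: fixing a Woodin cardinal $\delta$ above the trees representing $A$, I pass to the symmetric collapse $\tn{Col}(\omega,{<}\delta)$ with its reals $\mbb{R}^*$ and reinterpreted homogeneously Suslin sets $\tn{Hom}^*$, and use that the derived model $L(\mbb{R}^*,\tn{Hom}^*)$ satisfies $\tn{AD}^+$. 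The reinterpretation $A^*$ lies in $\tn{Hom}^*$, so it is Suslin and co-Suslin there; the $\tn{AD}^+$ structure theory then yields $L(A^*,\mbb{R}^*) \models \tn{AD}^+$, and an elementary embedding $L(A,\mbb{R})^V \to L(A^*,\mbb{R}^*)$ arising from the genericity iterations transfers $\tn{AD}^+$ back down to $V$. Verifying that $L(A,\mbb{R})$ is itself a model of $\tn{AD}^+$---rather than merely contained in or elementarily mapped into one---is exactly where the serious content sits, and it is the step I expect to dominate the argument.
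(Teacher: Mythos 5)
The paper does not prove this Fact at all: it is stated as a citation of known theorems of Martin, Steel, and Woodin, and the rest of the paper treats it as a black box. So there is no in-paper argument to compare yours against; what I can do is assess your sketch on its own terms. As an outline of the standard proofs it is essentially correct: the identification of \(\Gamma^\infty\) with the \(\infty\)-homogeneously Suslin sets under a proper class of Woodin cardinals, the Martin--Steel propagation theorem for closure under projections, leftmost-branch scales read off absolutely complemented (homogeneous) tree pairs for closure under scales, Woodin's theorem on sharps of uB sets, and the derived model theorem plus the \(\tn{AD}^+\) structure theory (every \(L(A,\mathbb{R})\) inside a model of \(\tn{AD}^+\) containing \(A\) and all the reals again satisfies \(\tn{AD}^+\)) for strong determinacy. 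Two caveats. First, some details are stated imprecisely: in the Martin--Steel theorem the hypothesis is that \(A\) is \(\delta^+\)-\emph{weakly} homogeneously Suslin (measures of completeness above \(\delta\)), with the complement of \(\exists^{\baire}A\) coming out \({<}\delta\)-homogeneously Suslin, not ``trees below \(\delta\)'' on both sides; and the norms of the leftmost-branch scale are projective in the Suslin \emph{representations} of \(A\) and its complement rather than in \(A\) itself, which is why one needs closure under projections of \(\tn{Hom}_\infty\) first. Neither slip is fatal under a proper class of Woodins. Second, an organizational point: your argument for item (4) invokes the elementary embedding \(j : L(A,\mathbb{R}) \to L(A^*,\mathbb{R}^*)\), which in this paper is Theorem \ref{sem_engine} and is stated with the conclusion of the present Fact as its \emph{hypothesis}; quoting it here would be circular in the paper's own architecture, so you would need to construct the embedding directly from the stationary tower or genericity iterations (as in the proof of the derived model theorem) rather than cite it.
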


This following is a fundamental tool of our paper:

\begin{theorem}[Woodin]
\label{sem_engine}
Assume that $\Gamma^\infty$ is
 closed under scales, sharps, and projections
 (which happens if there is a proper class
 of Woodin cardinals).
Suppose that $G$ is (set) generic over $V$
 and let $A \in \Gamma^\infty$.
Let $A^*$ be the canonical reinterpretation
 of $A$ in $V[G]$.
Then there exists an elementary embedding
 $$j : L(A,\mbb{R}) \to L(A^*,\mbb{R})^{V[G]}$$
 such that $j(A) = A^*$.
\end{theorem}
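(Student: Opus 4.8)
The plan is to construct $j$ from the sharp of $L(A,\mbb{R})$, exploiting the hypothesis that $\Gamma^\infty$ is closed under sharps, so that $(A,\mbb{R})^\#$ is itself universally Baire and therefore reinterprets canonically into $V[G]$. Recall that $(A,\mbb{R})^\#$ encodes a closed unbounded class $I$ of Silver indiscernibles for $L(A,\mbb{R})$ together with the complete first-order theory of the structure $\langle L(A,\mbb{R}); \in, A, x\rangle_{x \in \mbb{R}}$ in those indiscernibles. Two facts about $L(A,\mbb{R})$ will drive the argument: (i) every element of $L(A,\mbb{R})$ has the form $t^{L(A,\mbb{R})}[A,\vec\xi,\vec r]$ for a Skolem term $t$, finitely many indiscernibles $\vec\xi\in I$, and finitely many reals $\vec r$; and (ii) the sharp both computes and is computed from this theory-with-indiscernibles. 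Fact (i) is the standard relative-constructibility analysis of $L(A,\mbb{R})$ and uses no large cardinals.

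First I would fix the tree representations. Let $T,S$ witness that $A$ is universally Baire and $T_\#,S_\#$ witness the same for $(A,\mbb{R})^\#$, each pair projecting to complements in all set-generic extensions. In $V[G]$ we have $A^* = p[T]^{V[G]}$, and writing $\mbb{R}^* = \mbb{R}^{V[G]}$ I claim that the reinterpretation of the sharp is again a sharp, namely the sharp of the reinterpreted structure as computed in $V[G]$:
$$\big((A,\mbb{R})^\#\big)^{V[G]}_V = (A^*,\mbb{R}^*)^\#.$$
Granting this, fact (ii) gives that $\langle L(A,\mbb{R});\in,A,x\rangle_{x\in\mbb{R}}$ and $\langle L(A^*,\mbb{R});\in,A^*,x\rangle_{x\in\mbb{R}^*}^{V[G]}$ carry the same theory in their respective indiscernibles, over $A$, $A^*$, and the reals of $V$.

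The crux, and the main obstacle, is exactly the displayed commutation of reinterpretation with the sharp operation. To establish it I would argue that, since $T_\#,S_\#$ continue to project to complements in $V[G]$, the reinterpreted set is a well-defined set of reals in $V[G]$ coding a consistent ``theory-plus-indiscernibles'' blueprint; the wellfoundedness of the associated Ehrenfeucht--Mostowski model persists to $V[G]$ because it is witnessed by the universally Baire tree representation rather than by the reals of $V$ alone, and the remarkability conditions are arithmetic in the sharp and hence preserved under the tree reinterpretation. By uniqueness of the sharp, the reinterpreted object is forced to equal $(A^*,\mbb{R}^*)^\#$. Here closure under scales supplies the requisite norms and uniformizations making the tree analysis go through, and closure under projections guarantees that the auxiliary sets of reals involved (the coded theory, the satisfaction relation, the norms) are themselves in $\Gamma^\infty$, so that every absoluteness transfer above is legitimate.

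Finally I would assemble the embedding. Using the order-isomorphism of the $V$-indiscernibles onto an initial segment of the $V[G]$-indiscernibles (both classes being canonically enumerated by the ordinals), define
$$j\big(t^{L(A,\mbb{R})}[A,\vec\xi,\vec r]\big) = t^{L(A^*,\mbb{R})^{V[G]}}[A^*,\vec\eta,\vec r],$$
where $\vec\eta$ is the image of $\vec\xi$ and $\vec r \in \mbb{R}^V \subseteq \mbb{R}^*$. Well-definedness and elementarity are immediate: both structures realize the same complete type over their predicates and the real parameters in their indiscernibles, which is precisely the content of the shared theory coded by the reinterpreted sharp. Taking $t$ to name the distinguished predicate yields $j(A) = A^*$, and by fact (i) the map is total on $L(A,\mbb{R})$. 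It is of course not onto, since $V[G]$ contributes new reals, but an elementary embedding \emph{into} $L(A^*,\mbb{R})^{V[G]}$ with $j(A)=A^*$ is all that is required. The verifications in this last step are routine once the commutation of reinterpretation with sharps is in hand.
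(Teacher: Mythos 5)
The paper does not actually prove this theorem: it is quoted as Theorem 17 of Woodin's \emph{Suitable Extender Models I} and the ``proof'' is a citation. So the only question is whether your sketch would stand on its own. Your overall strategy --- transfer the theory-with-indiscernibles by showing that the canonical reinterpretation of $(A,\mbb{R})^\#$ in $V[G]$ is $(A^*,\mbb{R}^{V[G]})^\#$, then read off $j$ on Skolem terms --- is the standard and correct architecture for results of this kind, and the final assembly step (well-definedness and elementarity of $t[A,\vec\xi,\vec r]\mapsto t[A^*,\vec\eta,\vec r]$, using that reinterpretation does not change membership of old reals) is indeed routine once the commutation identity is in hand.

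The genuine gap is exactly at the step you flag as the crux, and your justification of it does not close it. First, an arbitrary pair of trees witnessing that $(A,\mbb{R})^\#$ is uB does not by itself ``witness'' that the blueprint generates wellfounded Ehrenfeucht--Mostowski models of every ordinal length in $V[G]$: that is a $\Pi^1_2$-in-the-codes assertion about the reinterpreted set, and to make it generically absolute you must first use closure under scales and projections to manufacture uB Suslin representations of the relevant derived relations (the satisfaction relation, the set of codes of illfounded term models, etc.) and then argue tree-by-tree; saying the remarkability conditions are ``arithmetic in the sharp'' handles only the easy, local clauses. Second, and more seriously, even granting that $p[T_\#]^{V[G]}$ is \emph{some} wellfounded remarkable blueprint in $V[G]$, ``uniqueness of the sharp'' does not yet apply: you must show it is the blueprint \emph{of the structure} $\langle L(A^*,\mbb{R}^{V[G]});\in,A^*,x\rangle_{x\in\mbb{R}^{V[G]}}$ --- i.e., that the term model it generates over the new reals, with the predicate interpreted via $p[T]^{V[G]}=A^*$, condenses onto $L(A^*,\mbb{R}^{V[G]})^{V[G]}$ and in particular that $(A^*,\mbb{R}^{V[G]})^\#$ exists at all in $V[G]$. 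That existence-and-identification step is where the real content of Woodin's theorem lives (in the literature it is carried out either by an induction on the $\Sigma_n$-theory of $L(A,\mbb{R})$ using scale closure, or via genericity iterations when Woodin cardinals are actually present), and your sketch asserts it rather than proves it.
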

\begin{proof}
This is Theorem 17 of \cite{woodin_sem}.
\end{proof}

\section{Upwards absoluteness}

Upwards absoluteness of $(\Sigma^2_1)^{\tn{uB}}$
 statements from $\tn{HOD}$ to $V$ follows immediately
 from Woodin's Theorem~\ref{sem_engine}:

\begin{proposition}
\label{upwards_theorem_enriched}
Assume that $\tn{HOD} \models$ ``$\Gamma^\infty$
 is closed under scales, sharps, and projections''
 (which happens if $\tn{HOD}$ has a proper class
 of Woodin cardinals).
Let $\bar{C}$ be a set of reals in $\tn{HOD}$
 that is universally Baire in $\tn{HOD}$.
Let $C$ be the canonical reinterpretation
 of $\bar{C}$ in $V$
 (so $\bar{C} = C \cap \tn{HOD}$).
Then there is an elementary embedding
 $$j : L(\bar{C}, \mathbb{R})^\tn{HOD} \to L(C, \mathbb{R})$$
 such that $j(\bar{C}) = C$.
Thus,
 $(\Sigma^2_1)^{\tn{uB}}$ statements are
 upwards absolute from $\tn{HOD}$ to $V$.
\end{proposition}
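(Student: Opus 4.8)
\emph{Proof proposal.} The plan is to reduce everything to Woodin's Theorem~\ref{sem_engine} by locating a \emph{single} set-generic extension of $\tn{HOD}$ that already contains all of $\mbb{R}^V$. The key observation is that although $V$ is in general only a class-generic extension of $\tn{HOD}$, the object $\mbb{R}^V$ is a \emph{set}, so Vop\v{e}nka's Theorem~\ref{theorem_vopenka} applies to it directly: there are a poset $\mbb{P}\in\tn{HOD}$ and a $\mbb{P}$-generic $G$ over $\tn{HOD}$ with $G\in V$ such that $\mbb{R}^V\in\tn{HOD}[G]$. First I would establish this by coding the $\in$-structure of $\tc(\{\mbb{R}^V\})$ as a set of ordinals and applying Vop\v{e}nka there, recovering $\mbb{R}^V$ in $\tn{HOD}[G]$ via a Mostowski collapse. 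Since $\tn{HOD}[G]$ is transitive and contained in $V$, membership $\mbb{R}^V\in\tn{HOD}[G]$ forces $\mbb{R}^{\tn{HOD}[G]}=\mbb{R}^V$.

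Next I would apply Theorem~\ref{sem_engine} inside $\tn{HOD}$, whose hypothesis is exactly ``$\Gamma^\infty$ is closed under scales, sharps, and projections'' in the base model. Taking the base model to be $\tn{HOD}$, the generic $G$ above, and $A=\bar C$ (which lies in $(\Gamma^\infty)^{\tn{HOD}}$ since $\bar C$ is uB in $\tn{HOD}$), this yields an elementary
\[ j\colon L(\bar C,\mbb{R})^{\tn{HOD}}\to L\big(\bar C^{\,\tn{HOD}[G]}_{\tn{HOD}},\,\mbb{R}\big)^{\tn{HOD}[G]} \]
with $j(\bar C)=\bar C^{\,\tn{HOD}[G]}_{\tn{HOD}}$, the reinterpretation of $\bar C$ in $\tn{HOD}[G]$. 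It remains to identify the target with $L(C,\mbb{R})$ as computed in $V$. Two facts do this. Since $\tn{HOD}[G]$ and $V$ both contain $C$, $\mbb{R}^V$, and all ordinals, the $L$-operation is absolute between them, so $L(C,\mbb{R})^{\tn{HOD}[G]}=L(C,\mbb{R})^V$; and one must check $\bar C^{\,\tn{HOD}[G]}_{\tn{HOD}}=C$.

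The step I expect to be the main technical point is this last identification of the two reinterpretations. Here I would fix $\tn{OD}^{\tn{HOD}}$ trees $T,S$ witnessing that $\bar C$ is sufficiently uB in $\tn{HOD}$; by Lemma~\ref{ub_in_hod_extends_to_ub_in_v} (and the equivalence that being uB via $\tn{OD}$ trees coincides with being the reinterpretation of a set uB in $\tn{HOD}$) the same $T,S$ give $(p[T])^V=C$ and project to complements in $V$. Because $\mbb{R}^{\tn{HOD}[G]}=\mbb{R}^V$ and $\bar C$ is uB in $\tn{HOD}$, these trees also project to complements in $\tn{HOD}[G]$ over this common set of reals; as $(p[T])^{\tn{HOD}[G]}\subseteq(p[T])^V=C$ and $(p[S])^{\tn{HOD}[G]}\subseteq(p[S])^V=\mbb{R}\setminus C$ partition $\mbb{R}^V$ exactly as $C$ and $\mbb{R}\setminus C$ do, equality is forced, so $\bar C^{\,\tn{HOD}[G]}_{\tn{HOD}}=(p[T])^{\tn{HOD}[G]}=C$. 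This completes the construction of $j\colon L(\bar C,\mbb{R})^{\tn{HOD}}\to L(C,\mbb{R})$ with $j(\bar C)=C$.

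Finally, for the upward absoluteness clause I would argue as follows. Given a true-in-$\tn{HOD}$ statement $(\exists A\in\Gamma^\infty)\,[\langle H(\omega_1),\in,A\rangle\models\psi(A)]$, fix a witness $\bar C\in(\Gamma^\infty)^{\tn{HOD}}$, i.e.\ a set uB in $\tn{HOD}$ with $\langle H(\omega_1)^{\tn{HOD}},\in,\bar C\rangle\models\psi(\bar C)$. This is expressible in $L(\bar C,\mbb{R})^{\tn{HOD}}$ (the structure on $H(\omega_1)$ with predicate $\bar C$ is coded by reals and $\bar C$), so elementarity of $j$ together with $j(\bar C)=C$ transports it to $L(C,\mbb{R})$, yielding $\langle H(\omega_1)^V,\in,C\rangle\models\psi(C)$. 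Since $C$ is uB in $V$ (the reinterpretation of a set uB in $\tn{HOD}$ is uB via $\tn{OD}$ trees, hence uB) and $\mbb{R}^{L(C,\mbb{R})}=\mbb{R}^V$ computes $H(\omega_1)$ correctly, $C$ witnesses the statement in $V$, as required.
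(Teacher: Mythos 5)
Your proposal is correct and follows essentially the same route as the paper: use Vop\v{e}nka's theorem to place the relevant data inside a single set-generic extension $\tn{HOD}[G]$ whose reals are exactly $\mbb{R}^V$, then apply Theorem~\ref{sem_engine} over $\tn{HOD}$ and identify the target model with $L(C,\mbb{R})$. The only cosmetic difference is that the paper applies Vop\v{e}nka to a set of ordinals $X$ with $C,\mbb{R}\in L[X]$ and reads off the identification from $L(C,\mbb{R})^{L[X]}=L(C,\mbb{R})$, whereas you apply it to a code for $\mbb{R}^V$ and verify $\bar C^{\,\tn{HOD}[G]}_{\tn{HOD}}=C$ directly via the trees; both amount to the same argument.
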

\begin{proof}
Since $V$ satisfies the Axiom of Choice,
 let $X$ be a set of ordinals such that
 $C,\mbb{R} \in L[X]$.
By Vop\v{e}nka's Theorem~\ref{theorem_vopenka},
 $X$ is generic over $\tn{HOD}$.
So by Theorem~\ref{sem_engine},
 there is an elementary embedding
 $$j : L(\bar{C}, \mbb{R})^\tn{HOD}
 \to L(C,\mbb{R})^{L[X]}.$$
However note that
 $L(C,\mbb{R})^{L[X]} = L(C,\mbb{R})$.
This proves the first part of the theorem.

For the second part,
 note that if $\varphi$ is a $(\Sigma^2_1)^{\tn{uB}}$
 statement that is true as witnessed by a
 set $\bar{D}$ that is universally Baire in $\tn{HOD}$,
 then the reinterpretation $D$ of $\bar{D}$ in $V$
 witnesses that $\varphi$ is true in $V$
 (using the first part of the theorem).
\end{proof}

From this we get the following:
\begin{theorem}
\label{upwards_theorem}
Assume that $\tn{HOD}$ has a proper class
 of Woodin cardinals.
Let $\varphi$ be a $(\Sigma^2_1)^{\tn{uB}}$
 statement that is true in $\tn{HOD}$.
Then $V$ satisifes $\varphi$.
\end{theorem}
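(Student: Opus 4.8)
The plan is to derive Theorem~\ref{upwards_theorem} as an immediate corollary of Proposition~\ref{upwards_theorem_enriched}, which has already done all the real work. The content of the proposition is that if $\tn{HOD}$ thinks $\Gamma^\infty$ is closed under scales, sharps, and projections, then $(\Sigma^2_1)^{\tn{uB}}$ statements are upwards absolute from $\tn{HOD}$ to $V$. So the only thing I need to verify is that the hypothesis of the theorem---that $\tn{HOD}$ has a proper class of Woodin cardinals---implies the hypothesis of the proposition.

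First I would invoke the cited Fact of Martin, Steel, and Woodin, which states that a proper class of Woodin cardinals implies $\Gamma^\infty$ is strongly determined and closed under scales, sharps, and projections. The subtle point is that this Fact is being applied \emph{inside} $\tn{HOD}$. Since $\tn{HOD}$ is a model of $\zfc$ and the Fact is a theorem of $\zfc$, relativizing everything to $\tn{HOD}$ gives: if $\tn{HOD}$ satisfies ``there is a proper class of Woodin cardinals,'' then $\tn{HOD}$ satisfies ``$\Gamma^\infty$ is closed under scales, sharps, and projections.'' This is exactly the hypothesis of Proposition~\ref{upwards_theorem_enriched}.

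With that hypothesis in hand, I would apply the proposition directly. Given a $(\Sigma^2_1)^{\tn{uB}}$ statement $\varphi$ true in $\tn{HOD}$, the second part of the proposition tells us that $\varphi$ is true in $V$, which is precisely the conclusion we want. Concretely, if $\bar{D} \in \tn{HOD}$ is universally Baire in $\tn{HOD}$ and witnesses $\varphi$ in $\tn{HOD}$, then its reinterpretation $D$ in $V$ witnesses $\varphi$ in $V$ via the elementary embedding $j : L(\bar{D},\mbb{R})^{\tn{HOD}} \to L(D,\mbb{R})$ with $j(\bar{D}) = D$.

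I do not anticipate any genuine obstacle, since this theorem is stated in the excerpt as ``From this we get the following,'' signaling it is a corollary of the preceding proposition. The only place where care is warranted is the relativization of the Martin--Steel--Woodin Fact to $\tn{HOD}$; one must confirm that the statement ``a proper class of Woodin cardinals implies closure of $\Gamma^\infty$ under scales, sharps, and projections'' is a theorem of $\zfc$ that relativizes cleanly to any transitive class model of $\zfc$, which it does. Thus the proof reduces to checking that the stated large cardinal hypothesis on $\tn{HOD}$ yields the closure hypothesis on $\Gamma^\infty$ as computed in $\tn{HOD}$, and then quoting Proposition~\ref{upwards_theorem_enriched}.
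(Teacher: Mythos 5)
Your proposal matches the paper exactly: the paper states Theorem~\ref{upwards_theorem} as an immediate consequence of Proposition~\ref{upwards_theorem_enriched} (``From this we get the following''), relying on the relativization to $\tn{HOD}$ of the Martin--Steel--Woodin closure fact, just as you describe. No gaps; your extra care about the relativization step is sound and is exactly the implicit content of the parenthetical hypothesis in the proposition.
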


\begin{example}
If $\tn{HOD}$ has a proper class of Woodin cardinals,
 then $\tn{HOD}$ and $V$ agree about all projective
 statements.
\end{example}

\section{Common symmetric extensions of V and HOD}

A question related to our investigation is
 how closed are the universally Baire
 sets of reals in HOD assuming there
 are large cardinals in $V$.
That is, suppose $V$ has a proper class
 of Woodin cardinals.
Are the universally Baire sets of reals
 of HOD closed under sharps?
Are they closed under scales?
Related to this, in this section
 we will show that if $V$ has a proper class
 of Woodin cardinals, then HOD has forcing extensions that
 satisfy fragments of the Axiom of Determinacy.

Let $M$ be a transitive model of $\zfc$
 and let $\delta$ be an ordinal.
Let $\mbb{R}^*$ be a collection of reals.
We call $M(\mbb{R}^*)$ a \textit{symmetric extension}
 \textit{of} $M$ \textit{by} $\tn{Col}(\omega,{<}\delta)$ iff
 $$\mbb{R}^* = \bigcup \{ \mathbb{R}^{M[G \restriction \alpha]} : \alpha < \delta \}$$
 and
 $$\mbb{R}^* = \mbb{R}^{M(\mbb{R}^*)}$$
 for some $G$ that is $\mbox{Col}(\omega,{<}\delta)$-generic
 over $M$.

\begin{lemma}
If $\delta$ is a strong limit cardinal, $G$
 is $\tn{Col}(\omega,{<}\delta)$-generic over \(V\),
 and $\mbb{R}^* := \bigcup_{\alpha < \lambda} \mbb{R}^{V[G\restriction \alpha]}$, then $V(\mbb{R}^*)$ is a symmetric extension of $V$.
\end{lemma}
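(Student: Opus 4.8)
The plan is to verify the two defining conditions of a symmetric extension. The first, that $\mbb{R}^* = \bigcup\{\mbb{R}^{V[G\restriction\alpha]} : \alpha < \delta\}$, holds by the very definition of $\mbb{R}^*$, so the content of the lemma is the second condition, $\mbb{R}^{V(\mbb{R}^*)} = \mbb{R}^*$. One inclusion is immediate: since $\mbb{R}^* \in V(\mbb{R}^*)$ and each of its elements is a real, $\mbb{R}^* \subseteq \mbb{R}^{V(\mbb{R}^*)}$. The substance is the reverse inclusion, i.e.\ that every real of $V(\mbb{R}^*)$ already appears in some $V[G\restriction\alpha]$ with $\alpha < \delta$. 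I want to stress at the outset why this needs a genuine homogeneity argument rather than a counting of nice names: when $\delta$ is inaccessible the poset $\tn{Col}(\omega,{<}\delta)$ is $\delta$-c.c., every nice name for a real has bounded support, and one gets the stronger conclusion $\mbb{R}^{V[G]} = \mbb{R}^*$ for free; but here $\delta$ is only assumed to be a strong limit and may be singular, in which case $V[G]$ has reals (coding cofinal-in-$\delta$ information) that lie in no bounded stage, so the bounded-support shortcut is unavailable and we must use that $V(\mbb{R}^*)$ is a \emph{proper} symmetric submodel of $V[G]$.

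First I would show that any real $x \in V(\mbb{R}^*)$ admits a definition whose only non-ground parameters are $\mbb{R}^*$ together with \emph{finitely many} reals drawn from $\mbb{R}^*$. Writing $V(\mbb{R}^*) = \bigcup_\alpha L_\alpha(V \cup \{\mbb{R}^*\})$, we may assume $x$ enters at a successor stage, hence is definable over some $L_\gamma(V\cup\{\mbb{R}^*\})$ from finitely many parameters. Each such parameter either lies in $V \cup \{\mbb{R}^*\} \cup \mbb{R}^*$ (a leaf) or enters at a strictly earlier stage and is itself so definable. This spawns a finitely branching, well-founded tree of definitions (the stage indices strictly descend along branches), which by K\"onig's lemma is finite. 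Composing the finitely many definitions into a single formula exhibits $x$ as definable from $\mbb{R}^*$, finitely many ordinals $\bar\xi$, finitely many sets $\bar a$ of $V$, and finitely many reals $\bar r \in \mbb{R}^*$. Since $\delta$ is a limit ordinal and there are only finitely many $r_i$, each $r_i \in V[G\restriction\beta_i]$ for some $\beta_i < \delta$, so all of $\bar r$ lie in $V[G\restriction\beta]$ where $\beta = \sup_i \beta_i < \delta$.

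Finally I would run the homogeneity argument. Factor $\tn{Col}(\omega,{<}\delta) \cong \mbb{P}_{<\beta}\times\mbb{Q}$ with tail $\mbb{Q} = \tn{Col}(\omega,[\beta,\delta))$, and write $G = G_{<\beta}\times H$ so that all the parameters $\bar a, \bar\xi, \bar r$ lie in $V[G\restriction\beta] = V[G_{<\beta}]$. Because $V(\mbb{R}^*)$ is definable in $V[G]$ from $V$ and $\mbb{R}^*$, the relation ``$n \in x$'' is expressed by a formula $\Psi(n,\bar a,\bar\xi,\bar r,\mbb{R}^*)$ evaluated in $V[G]$. The key symmetry fact, standard for the symmetric collapse, is that $\mbb{R}^*$ has a name invariant under every automorphism of the tail $\mbb{Q}$; meanwhile the remaining parameters are ground parameters for $\mbb{Q}$ over $V[G_{<\beta}]$. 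Since $\mbb{Q}$ is weakly homogeneous, it follows that for each $n$ the truth value of $\Psi$ is forced over $V[G_{<\beta}]$ by the trivial condition of $\mbb{Q}$, hence is decided by $G\restriction\beta$ alone. Therefore $x = \{n : \Psi(n,\bar a,\bar\xi,\bar r,\mbb{R}^*)\ \text{holds}\}$ is computable in $V[G\restriction\beta]$, giving $x \in V[G\restriction\beta] \subseteq \mbb{R}^*$, as required. The main obstacle is precisely this last step: establishing that a name for $\mbb{R}^*$ is invariant under the tail automorphisms and that, with the real parameters $\bar r$ absorbed into the bottom factor, weak homogeneity of $\mbb{Q}$ lets one conclude the truth value is independent of the tail generic $H$.
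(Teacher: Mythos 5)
Your proof is correct and follows essentially the same route as the paper's: locate the finitely many parameters of a definition of $x$ in some bounded stage $V[G\restriction\beta]$ and use homogeneity of the tail collapse to conclude $x\in V[G\restriction\beta]\subseteq\mbb{R}^*$. You simply spell out two steps the paper leaves implicit, namely the reduction to finitely many real parameters and the invariance of the canonical name for $\mbb{R}^*$ under the tail automorphisms witnessing weak homogeneity.
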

\begin{proof}
It suffices to show that \(\mathbb R^* = \mathbb R^{V(\mathbb R^*)}\). 
(Note that this is immediate if \(\delta\) is strongly inaccessible since \(\mathbb R^* = \mathbb R^{V[G]}\).)
Fix $x \in \mbb{R}^{V(\mbb{R}^*)}$.
We must show that $x \in \mbb{R}^*$.
Because $x \in V(\mbb{R}^*)$,
 fix some $a \in V$ and $b \in \mbb{R}^*$
 such that $x$ is definable in $V[G]$
 using only $a$ and $b$ as parameters.
Fix an ordinal $\alpha < \delta$ such that
 $b \in V[G \restriction \alpha]$.
Because $V[G]$ is generic over $V[G\restriction \alpha]$
 by a homogeneous forcing and
 $a,b \in V[G\restriction \alpha]$, it follows that
 $x \in V[G\restriction \alpha]$.
Hence $x \in \mbb{R}^*$.
\end{proof}

\begin{observation}
If $M(\mbb{R}^*)$ is a symmetric extension of $M$
 as witnessed by $G \subseteq \mbox{Col}(\omega,{<}\delta)$
 and $\delta$ is inaccessible, then
 $$\mbb{R}^* = \mathbb{R}^{M[G]}.$$
\end{observation}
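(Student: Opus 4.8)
The plan is to prove the two inclusions separately, the inclusion $\mbb{R}^* \subseteq \mbb{R}^{M[G]}$ being immediate and its reverse being the real content of the observation. For the easy direction, recall that by the definition of a symmetric extension we have $\mbb{R}^* = \bigcup\{\mbb{R}^{M[G\restriction\alpha]} : \alpha < \delta\}$; since $M[G\restriction\alpha] \subseteq M[G]$ for each $\alpha < \delta$, every member of each $\mbb{R}^{M[G\restriction\alpha]}$ lies in $\mbb{R}^{M[G]}$, and hence the union is contained in $\mbb{R}^{M[G]}$.

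For the reverse inclusion I would fix a real $x \in \mbb{R}^{M[G]}$ and produce an ordinal $\alpha < \delta$ with $x \in M[G\restriction\alpha]$; since $G\restriction\alpha$ is exactly the generic for the initial factor $\tn{Col}(\omega,{<}\alpha)$, this places $x$ in $\mbb{R}^*$. The structural input I would invoke is the standard fact that, because $\delta$ is inaccessible, $\tn{Col}(\omega,{<}\delta)$ has the $\delta$-chain condition; this is the usual $\Delta$-system argument on the finite supports of conditions, using the regularity of $\delta$ together with the fact that each initial collapse $\tn{Col}(\omega,{<}\beta)$ has size $< \delta$.

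I would then pass to a nice name for $x$ computed in $M$: for each pair $(n,m)$ choose a maximal antichain deciding the value ``$x(n) = m$'', and let $W$ be the set of all conditions occurring in these countably many antichains. By the $\delta$-cc each such antichain has size $< \delta$, so $W$ is a union of countably many sets each of size $< \delta$, and hence $|W| < \delta$ by the regularity of $\delta$. Since every condition of $\tn{Col}(\omega,{<}\delta)$ has finite support in $\delta$, the set $\bigcup_{p \in W} \tn{supp}(p)$ is a union of $< \delta$ many finite subsets of $\delta$, so again by regularity it is bounded, say below some $\alpha < \delta$. Consequently the name mentions only coordinates below $\alpha$, so it may be construed as a $\tn{Col}(\omega,{<}\alpha)$-name whose evaluation depends only on $G\restriction\alpha$; thus $x \in M[G\restriction\alpha] \subseteq \mbb{R}^*$, completing the reverse inclusion.

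The only place requiring care is the interplay of the two facets of inaccessibility: strong-limitness keeps the initial collapses small enough for the $\Delta$-system argument to deliver the $\delta$-cc, while regularity is what lets us absorb the countably many small antichains, and then their finitely supported conditions, into a single bounded initial segment of the collapse. Once the $\delta$-cc is in hand the remainder is a routine nice-name-and-support computation.
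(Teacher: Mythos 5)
Your proof is correct and supplies the standard $\delta$-c.c.\ plus nice-name argument that the paper leaves implicit (the Observation is stated without proof, and the preceding lemma's proof merely remarks that $\mbb{R}^* = \mbb{R}^{M[G]}$ is immediate for inaccessible $\delta$). One cosmetic point in your closing commentary: since conditions in $\tn{Col}(\omega,{<}\delta)$ are finite, each factor $\tn{Col}(\omega,\beta)$ already has size $|\beta|<\delta$ without any appeal to strong-limitness, so the $\delta$-c.c.\ really only uses that $\delta$ is a regular limit cardinal; the rest of your argument is exactly the intended one.
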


\begin{lemma}
\label{sym_ext_lemma}
Let $\delta$ be a strong limit cardinal.
Let $V(\mbb{R}^*)$ be a symmetric extension of $V$
 as witnessed by
 some $G_1$ that is
 $\mbox{Col}(\omega,{<}\delta)$-generic over $V$.
Then there is a
 $G_2$ that is $\mbox{Col}(\omega,{<}\delta)$-generic
 over \tn{HOD} such that $\tn{HOD}(\mbb{R}^*)$ is a symmetric
 extension of $\tn{HOD}$ as witnessed by $G_2$.
\end{lemma}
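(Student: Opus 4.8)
The plan is to reduce the statement to the construction of a single generic $G_2$, and then to build $G_2$ by folding the relevant $V$-information into the L\'evy collapse by way of Vop\v{e}nka's theorem. First I would dispose of the routine preliminaries. Since $\tn{Col}(\omega,{<}\delta)$ is definable from $\delta$ and absolute between transitive models of $\zf$ containing $\delta$, the poset $\mbb{C}:=\tn{Col}(\omega,{<}\delta)$ is computed identically in $V$ and in $\tn{HOD}$; and since cardinals and the strong-limit property pass downward to inner models, $\delta$ remains a strong limit cardinal in $\tn{HOD}$, so that ``symmetric extension of $\tn{HOD}$ by $\tn{Col}(\omega,{<}\delta)$'' is meaningful. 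Next, the equation $\mbb{R}^*=\mbb{R}^{\tn{HOD}(\mbb{R}^*)}$ demanded by the definition of symmetric extension follows just as in the proof of the preceding lemma: $\tn{HOD}(\mbb{R}^*)\subseteq V(\mbb{R}^*)$ yields $\mbb{R}^{\tn{HOD}(\mbb{R}^*)}\subseteq \mbb{R}^{V(\mbb{R}^*)}=\mbb{R}^*$, while $\mbb{R}^*\in\tn{HOD}(\mbb{R}^*)$ yields the reverse inclusion. Everything therefore reduces to finding a $G_2$ that is $\mbb{C}$-generic over $\tn{HOD}$ with
$$\bigcup_{\alpha<\delta}\mbb{R}^{\tn{HOD}[G_2\restriction\alpha]}=\mbb{R}^*.$$

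It is tempting to take $G_2=G_1$: because $\tn{HOD}\subseteq V$, the $V$-generic $G_1$ is automatically $\mbb{C}$-generic over $\tn{HOD}$, and $\tn{HOD}[G_1\restriction\alpha]\subseteq V[G_1\restriction\alpha]$ gives $\bigcup_\alpha\mbb{R}^{\tn{HOD}[G_1\restriction\alpha]}\subseteq\mbb{R}^*$. The trouble is the reverse inclusion: a real of $V[G_1\restriction\alpha]$ is realized by a nice $\tn{Col}(\omega,{<}\alpha)$-name $\tau\in V$, and there is no reason for $\tau$, nor for the real it names, to lie in any $\tn{HOD}[G_1\restriction\beta]$. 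Thus the $\tn{HOD}$-collapse driven by $G_1$ carries too few reals, and the missing ones are precisely those arising from $V$-names. The idea is to replace $G_1$ by a generic $G_2$ that absorbs these names into the collapse.

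The engine is Vop\v{e}nka's Theorem~\ref{theorem_vopenka} together with the absorption features of the L\'evy collapse. Each nice $\tn{Col}(\omega,{<}\alpha)$-name for a real (with $\alpha<\delta$) is coded by a bounded set of ordinals --- a subset of some $\eta<\delta$, since $|\tn{Col}(\omega,{<}\alpha)|<\delta$ --- and hence, by the quantitative form of Vop\v{e}nka's theorem recorded in the remark after Theorem~\ref{theorem_vopenka}, is generic over $\tn{HOD}$ for a poset in $\tn{HOD}$ of size at most $2^{|\eta|}<\delta$, where $\delta$ being a strong limit is essential. There are at most $\delta$ such names, and they may be distributed cofinally below $\delta$. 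I would then build $G_2$ by a recursion of length $\delta$ that interleaves, cofinally often, the Vop\v{e}nka forcing absorbing the names appearing below the current stage with the collapse blocks supplied by $G_1$; each stage uses a forcing of size $<\delta$, and it is arranged that every initial segment $G_2\restriction\alpha$ lies in some $V[G_1\restriction\beta]$, so that $\tn{HOD}[G_2\restriction\alpha]\subseteq V[G_1\restriction\beta]$ and no real outside $\mbb{R}^*$ is produced (this secures the easy inclusion). Because each block collapses to $\omega$ and has size $<\delta$, the entire construction is, from the standpoint of $\tn{HOD}$, a forcing isomorphic to $\tn{Col}(\omega,{<}\delta)$, and the transferred filter $G_2$ is $\mbb{C}$-generic over $\tn{HOD}$ with genericity automatic, being the image of a genuine product/iteration generic under an isomorphism lying in $\tn{HOD}$.

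The main obstacle is the reverse inclusion $\mbb{R}^*\subseteq\bigcup_\alpha\mbb{R}^{\tn{HOD}[G_2\restriction\alpha]}$, that is, verifying that the absorption truly places every $V$-name for a collapse real, and the real it denotes, into some $\tn{HOD}[G_2\restriction\alpha]$, together with the verification that the interleaved forcing is isomorphic in $\tn{HOD}$ to $\tn{Col}(\omega,{<}\delta)$ even when $\delta$ is a \emph{singular} strong limit cardinal, where the universality of the L\'evy collapse must be applied with care. The forward inclusion and the genericity of $G_2$, by contrast, are engineered into the construction. I expect the bookkeeping that keeps every stage below $\delta$ while guaranteeing that each name is absorbed at some bounded stage to be the most delicate point.
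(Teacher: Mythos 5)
Your overall strategy --- reduce to showing that every real of $\mbb{R}^*$ is suitably generic over $\tn{HOD}$ via Vop\v{e}nka's theorem, and then assemble a single $\tn{Col}(\omega,{<}\delta)$-generic $G_2$ by absorbing small forcings into the L\'evy collapse --- is the same as the paper's in spirit, but the two halves are executed differently. The paper does not build $G_2$ by hand: it cites the ``folk'' Lemma 3.1.5 of Larson's stationary tower book, which says exactly that if $\delta=\sup\{\omega_1^{\tn{HOD}[x]}:x\in\mbb{R}^*\}$ and every $x\in\mbb{R}^*$ is generic over $\tn{HOD}$ for a poset in $(V_\delta)^{\tn{HOD}}$, then $\tn{HOD}(\mbb{R}^*)$ is a symmetric collapse extension; so the only real work in the paper is verifying those two hypotheses. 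For the second hypothesis the paper argues differently from you: given $x\in V[G_1\restriction\lambda]$, it applies Vop\v{e}nka \emph{inside} $V[G_1\restriction\lambda]$ to make $x$ generic over $\tn{HOD}^{V[G_1\restriction\lambda]}$, then uses homogeneity of the collapse (Fact \ref{when_v_grows_homog_hod_shrinks}) to see that $\tn{HOD}^{V[G_1\restriction\lambda]}$ is a ground of $\tn{HOD}$, and finally invokes the intermediate model theorem to transfer genericity to $\tn{HOD}$ with a bound $(2^{2^\omega})^{V[G_1\restriction\lambda]}<\delta$. Your alternative --- apply Vop\v{e}nka in $V$ to a nice name $\tau\in V$ for $x$ (a bounded subset of $\delta$), and then note that $G_1\restriction\alpha$ is generic over the inner model $\tn{HOD}[\tau]$ of $V$, so $x\in\tn{HOD}[\tau][G_1\restriction\alpha]$ is generic over $\tn{HOD}$ for a product of size $<\delta$ --- is a legitimate and arguably cleaner way to get the same hypothesis, and it does yield each real individually.

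The weak point is the assembly of $G_2$, which you defer as ``bookkeeping.'' What you are re-deriving there is precisely the content of the folk lemma, and the delicate issue is not the arithmetic of keeping stages below $\delta$ but \emph{mutual genericity}: Vop\v{e}nka's theorem makes each name (or batch of names) generic over $\tn{HOD}$, but gives no reason for the next batch to be generic over $\tn{HOD}[G_2\restriction\kappa_i]$, the extension already built. The standard repair (used in Larson's proof) is to note that the already-constructed $G_2\restriction\kappa_i$ can be coded by a real of some $V[G_1\restriction\beta]$, so that the pair consisting of it and the next real to be absorbed is again a single real of $\mbb{R}^*$, hence generic over $\tn{HOD}$ by a small poset, and then to apply the absorption lemma $M[x][g]=M[h]$ for $h\subseteq\tn{Col}(\omega,\kappa)$ at each stage; without some such device your recursion does not go through. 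So either carry out that argument or, as the paper does, cite the folk lemma and confine yourself to verifying its hypotheses.
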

\begin{proof}
We will verify the hypothesis of the ``folk''
 Lemma 3.1.5 of \cite{larson_tower_book}.
The easier condition is that $$\delta =
 \mbox{sup}\{ \omega_1^{\tn{HOD}[x]} : x \in \mbb{R}^* \}.$$
This is true because every $\alpha < \delta$ is coded
 to be countable by some $x \in \mbb{R}^*$
 (the generic $G_1$ does this).

The second condition we must verify is that every
 $x \in \mbb{R}^*$ is generic over $\tn{HOD}$
 by some poset in $(V_\delta)^\tn{HOD}$.
Fix $x \in \mbb{R}^*$.
By the generic $G_1$,
 we have that $x$ is generic over $V$
 by $\mbox{Col}(\omega,\lambda)$ for some fixed
 $\lambda < \delta$.
If we let $G_1^- := G_1 \restriction \lambda$,
 we see that
 $x \in V[G_1^-]$.
Note that $\delta$ is a strong limit cardinal
 in $V[G_1^-]$.
We now get that $x$
 is generic over $\tn{HOD}^{V[G_1^-]}$ by a Vopenka poset
 $\mathbb{P} \in \tn{HOD}^{V[G_1^-]}$ where
 $|\mathbb{P}| \le (2^{2^\omega})^{V[G_1^-]}$.
Since $\mbox{Col}(\omega,\lambda)$ is homogeneous,
 we have that $\tn{HOD}^{V[G_1^-]}$ is a ground of
 $\tn{HOD}$.
Thus $\tn{HOD}$ is intermediate
 between $\tn{HOD}^{V[G_1^-]}$ and $V[G_1^-]$,
 and so by the proof of the intermediate model theorem
 it follows that $x$ is generic over $\tn{HOD}$
 by a poset of size
 $$(2^{2^\omega})^{V[G_1^-]} < \delta.$$
\end{proof}

So here is the main result of this section:

\begin{theorem}
\label{ad_in_ext_down_to_hod}
Assume there is a proper class of
 Woodin cardinals.
Let $A$ be a set of reals in $\tn{HOD}$
 that are universally Baire in $\tn{HOD}$.
Let $\delta$ be an inaccessible cardinal of $V$.
Then the $\mbox{Col}(\omega,{<}\delta)$
 forcing extension of $\tn{HOD}$
 satisfies that $L(A^{**},\mathbb{R})
  \models \tn{AD}$,
  where $A^{**}$ is the reinterpretation
  of $A$ in the forcing extension.
\end{theorem}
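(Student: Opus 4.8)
The plan is to route the problem through the $\mbox{Col}(\omega,{<}\delta)$ extension of $V$, where a proper class of Woodin cardinals survives (so strong determinacy is available), and then transfer the conclusion down to $\tn{HOD}$ by arranging that the two extensions have \emph{literally the same reals}. First I would fix $G_1$ that is $\mbox{Col}(\omega,{<}\delta)$-generic over $V$ and set $\mbb{R}^* = \mbb{R}^{V[G_1]}$. Since $\delta$ is inaccessible, the Observation above gives $\mbb{R}^* = \bigcup_{\alpha<\delta}\mbb{R}^{V[G_1\restriction\alpha]}$, so $V(\mbb{R}^*)$ is a symmetric extension of $V$. By Lemma~\ref{sym_ext_lemma} there is $G_2$ that is $\mbox{Col}(\omega,{<}\delta)$-generic over $\tn{HOD}$ witnessing $\tn{HOD}(\mbb{R}^*)$ as a symmetric extension of $\tn{HOD}$; as $\delta$ remains inaccessible in $\tn{HOD}$ (regularity is downward absolute and $\tn{HOD}$ computes smaller power sets, so $\delta$ is still strong limit there), the Observation again yields $\mbb{R}^{\tn{HOD}[G_2]} = \mbb{R}^* = \mbb{R}^{V[G_1]}$. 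Note also that $\tn{HOD}[G_2]\subseteq V[G_1]$, since $\mbb{R}^*\in V[G_1]$ and $\tn{HOD}\subseteq V$.

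Next I would track the relevant reinterpretations through all three models using a single pair of $\tn{OD}$ (in $\tn{HOD}$) trees. Fix $\tn{OD}$ trees $T,S\in\tn{HOD}$ witnessing that $A$ is ${\le}\delta$-uB in $\tn{HOD}$ (available since $A$ is uB in $\tn{HOD}$). Then the reinterpretation of $A$ in $\tn{HOD}[G_2]$ is $A^{**} = (p[T])^{\tn{HOD}[G_2]}$; by Lemma~\ref{ub_in_hod_extends_to_ub_in_v} the set $A' := A^{V}_{\tn{HOD}} = (p[T])^{V}$ is uB via $\tn{OD}$ trees in $V$, hence uB in $V$; and its reinterpretation in $V[G_1]$ is $A^* := (p[T])^{V[G_1]}$. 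The key claim is $A^{**}=A^*$. Upward absoluteness of $p[T]$ gives $A^{**}\subseteq A^*$, and for the reverse, given $x\in A^*\subseteq\mbb{R}^* = \mbb{R}^{\tn{HOD}[G_2]}$, the trees $T,S$ project to complements in $\tn{HOD}[G_2]$, so either $x\in(p[T])^{\tn{HOD}[G_2]}=A^{**}$ or $x\in(p[S])^{\tn{HOD}[G_2]}\subseteq(p[S])^{V[G_1]}$; the latter together with $x\in(p[T])^{V[G_1]}$ contradicts that $T,S$ project to complements in $V[G_1]$. Since $A^{**}=A^*$ and the two models share the reals $\mbb{R}^*$, and since $L(X,\mbb{R}^*)$ depends only on $X$, $\mbb{R}^*$, and the ordinals, the inner models coincide: $L(A^{**},\mbb{R})^{\tn{HOD}[G_2]} = L(A^*,\mbb{R})^{V[G_1]}$.

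Finally I would produce $\tn{AD}$ on the $V$-side and pull it back. Because the proper class of Woodin cardinals of $V$ has cofinally many members above $\delta$, it persists into $V[G_1]$; as $A^*$ is uB in $V[G_1]$, strong determinacy gives $L(A^*,\mbb{R})^{V[G_1]}\models\tn{AD}^+$, hence $\models\tn{AD}$. (Alternatively, one may apply Theorem~\ref{sem_engine} to the uB set $A'$ and the generic $G_1$ to obtain an elementary $j:L(A',\mbb{R})^{V}\to L(A^*,\mbb{R})^{V[G_1]}$ and transfer $\tn{AD}$ from $L(A',\mbb{R})^{V}$, which satisfies $\tn{AD}^+$ by strong determinacy in $V$.) By the coincidence of inner models this transfers to $L(A^{**},\mbb{R})^{\tn{HOD}[G_2]}\models\tn{AD}$. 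To pass from this particular $G_2$ to an arbitrary $\mbox{Col}(\omega,{<}\delta)$-generic over $\tn{HOD}$, I would invoke the weak homogeneity of $\mbox{Col}(\omega,{<}\delta)$ together with the fact that $T$ (and hence the canonical name for $A^{**}=p[T]$) and the canonical name for $\mbb{R}$ are $\tn{OD}$ in $\tn{HOD}$ and automorphism-invariant, so the truth of ``$L(A^{**},\mbb{R})\models\tn{AD}$'' is decided by the empty condition. I expect the main obstacle to be the bookkeeping in the key claim $A^{**}=A^*$: it hinges on having arranged (via $\delta$-inaccessibility and Lemma~\ref{sym_ext_lemma}) that $\tn{HOD}[G_2]$ and $V[G_1]$ have exactly the same reals, and on $T,S$ projecting to complements in \emph{both} extensions simultaneously.
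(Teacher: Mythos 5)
Your proposal is correct and follows essentially the same route as the paper's proof: pass to $V[G_1]$ where the surviving Woodin cardinals give $L(A^{**},\mbb{R})^{V[G_1]}\models\tn{AD}$, use Lemma~\ref{sym_ext_lemma} to realize $\mbb{R}^*$ as the reals of a $\tn{Col}(\omega,{<}\delta)$-extension of $\tn{HOD}$, and observe that the two $L(A^{**},\mbb{R})$ models coincide. You additionally spell out two points the paper leaves implicit --- the verification that the reinterpretations agree ($A^{**}=A^*$) and the homogeneity argument for passing to an arbitrary generic --- both of which are welcome.
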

\begin{proof}
Let $A^*$ be the reinterpretation of $A$
 in $V$.
Let $G_1$ be $\mbox{Col}(\omega,{<}\delta)$-generic
 over $V$.
Let $A^{**}$ be the reinterpretation of $A^*$
 in $V[G_1]$
 (and note that $A^{**}$ is universally Baire
 in $V[G_1]$).
Since $V$ has a proper class of Woodin cardinals,
 it follows that $V[G_1]$ has a proper class
 of Woodin cardinals, and so
 $L(A^{**},\mathbb{R})^{V[G_1]} \models \tn{AD}$.

Let $\mbb{R}^*$ be the set of reals in $V[G_1]$.
Note that $A^{**} \in V(\mbb{R}^*)$ and so
 $$L(A^{**},\mathbb{R})^{V(\mbb{R}^*)}
 =  L(A^{**},\mathbb{R})^{V[G_1]}
 \models \tn{AD}.$$
We have that $V(\mbb{R}^*)$ is a symmetric
 extension of $V$.
By Lemma~\ref{sym_ext_lemma},
 the model $\tn{HOD}(\mbb{R}^*)$
 is a symmetric extension of $\tn{HOD}$
 as witnessed by some fixed
 $G_2$ that is $\mbox{Col}(\omega,{<}\delta)$-generic
 over $\tn{HOD}$.
Here is a key:
 the $L(A^{**},\mathbb{R})$ model of
 $V(\mbb{R}^*)$ is the same as the
 $L(A^{**},\mathbb{R})$ model of
 $\tn{HOD}(\mbb{R}^*)$.
Note also that $A^{**}$ is the reinterpretation
 of $A$ in $\tn{HOD}(\mbb{R}^*)$.
Thus
 $L(A^{**},\mathbb{R})^{\tn{HOD}(\mbb{R}^*)} \models \tn{AD}$,
 as desired.
\end{proof}

\section{Local results}

\begin{proposition}
Assume that every $\Sigma^1_2$
 set of reals is universally Baire.
Then this holds in \tn{HOD}.
\end{proposition}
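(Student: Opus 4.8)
My plan is to reduce the statement to the closure of the universe under sharps, a property that transfers from $V$ down to $\tn{HOD}$ by elementary absoluteness. The key external input is the Feng--Magidor--Woodin characterization (see \cite{feng_ub}): provably in $\zfc$, every $\Sigma^1_2$ set of reals is universally Baire if and only if $x^\#$ exists for every real $x$. Since this is an equivalence in $\zfc$ and $\tn{HOD}$ is a model of $\zfc$, it suffices to prove the single implication that if every real has a sharp in $V$ then $\tn{HOD}$ satisfies ``every real has a sharp''. Applying the characterization inside $\tn{HOD}$ then yields that $\tn{HOD}$ thinks every $\Sigma^1_2$ set of reals is universally Baire.

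To carry out the transfer, fix a real $x \in \tn{HOD}$. By the hypothesis together with the forward direction of the characterization, $V$ is closed under sharps, so $x^\#$ exists in $V$; write $u = x^\#$. Because $x$ is a real in $\tn{HOD}$ it is ordinal definable, and $x^\#$ is the unique real satisfying a fixed $\Pi^1_2(x,u)$ condition characterizing the sharp; hence $u$ is ordinal definable and therefore $u \in \tn{HOD}$. The predicate ``$u = x^\#$'' is $\Pi^1_2$ in the parameters $u,x$, both of which lie in $\tn{HOD}$; since $\tn{HOD}$ is an inner model, Shoenfield (Mostowski) absoluteness gives that $\tn{HOD} \models$ ``$u = x^\#$''. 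As $x$ was an arbitrary real of $\tn{HOD}$, this shows $\tn{HOD} \models$ ``every real has a sharp''.

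Combining the two steps with the Feng--Magidor--Woodin equivalence applied in $\tn{HOD}$ completes the argument. I expect the main obstacle to be precisely this downward transfer of sharps: verifying both that the $V$-sharp of a real of $\tn{HOD}$ already belongs to $\tn{HOD}$ (via its ordinal definability) and that $\tn{HOD}$ recognizes it as the sharp (via $\Pi^1_2$-absoluteness into the inner model). A secondary point to pin down is the exact form of the Feng--Magidor--Woodin characterization: if it is stated with sharps of arbitrary sets rather than of reals, the same transfer argument applies, since for any $X \in \tn{HOD}$ the sharp $X^\#$ is again ordinal definable from $X$ and its defining properties are absolute between the inner model $\tn{HOD}$ and $V$.
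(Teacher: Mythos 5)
Your proof is correct, but it transfers the sharp closure to $\tn{HOD}$ by a genuinely different mechanism than the paper. Both arguments begin with the Feng--Magidor--Woodin equivalence between ``every $\Sigma^1_2$ set is universally Baire'' and closure under sharps, and both then face the same task: showing that $\tn{HOD}$ is closed under sharps. The paper does this with Vop\v{e}nka's theorem: given a set of ordinals $x \in \tn{HOD}$, it codes a large $V_\alpha$ containing $x^\#$ by a set of ordinals $Y$, observes that $Y$ is set-generic over $\tn{HOD}$, so $\tn{HOD}[Y] \models$ ``$x^\#$ exists,'' and then uses the fact that set forcing cannot create or destroy sharps to pull this back to $\tn{HOD}$. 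You instead observe directly that $x^\#$ is definable from $x$, hence ordinal definable and an element of $\tn{HOD}$, and that $\tn{HOD}$ recognizes it as the sharp by absoluteness. Your route is arguably more elementary (no forcing at all), while the paper's route is of a piece with the Vop\v{e}nka-based arguments used throughout and avoids any analysis of how sharps are characterized. One point you rightly flag but should not gloss over: the Feng--Magidor--Woodin characterization requires sharps of \emph{all} sets, not just reals, so your main two paragraphs (reals plus Shoenfield) do not suffice on their own. For an arbitrary set of ordinals $X \in \tn{HOD}$ the statement ``$u = X^\#$'' is no longer $\Pi^1_2$, so Shoenfield is not literally available; what one uses instead is the standard absoluteness of the E.M.-set characterization of $X^\#$ (well-foundedness of the countably generated models of the theory is absolute between transitive models containing the theory), which does go through for an inner model containing both $X$ and $X^\#$. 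With that substitution your argument is complete.
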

\begin{proof}
It is a fact that
 every $\Sigma^1_2$ set of reals is universally Baire
 iff every set has a sharp (see \cite{feng_ub}).
Thus we want to show that $\tn{HOD}$ satisfies that
 every set of ordinals has a sharp.
So now assume that $x \in \tn{HOD}$ is a set of ordinals.
We will show that $\tn{HOD} \models x^\#$ exists.

Let $\alpha$ be an ordinal that is large enough so that
 $V_\alpha \models x^\#$ exists.
Let $Y$ be a set of ordinals such that
 $V_\alpha \subseteq L[Y]$.
Now $Y$ is generic over $\tn{HOD}$,
 by Vop\v{e}nka's theorem.
Since $\tn{HOD}[Y]$ satisfies that $x^\#$ exists
 and (set) forcing cannot change whether the sharp
 of a set exists,
 we get that $\tn{HOD} \models x^\#$ exists.
\end{proof}

\begin{proposition}
    Suppose that projective determinacy holds. Then \(\tn{HOD}\) is projectively correct, or in other words,
    \(V_{\omega+1}\cap \textnormal{HOD}\preceq V_{\omega+1}\). In particular, projective determinacy holds in \(\tn{HOD}\).
    \begin{proof}
        Suppose \(x\in V_{\omega+1}\cap \textnormal{HOD}\), and suppose \(A\subseteq V_{\omega+1}\) is a nonempty set that is definable over \(V_{\omega+1}\) from \(x\). We will show that \(A\cap \tn{HOD}\) is nonempty. By the Moschovakis basis theorem
        \cite[6C.6]{moschovakis_book}, \(A\) has an element \(a\) that is \(\Sigma^1_n(x)\) for some \(n < \omega\). Therefore \(a\in A\cap \tn{HOD}\), as desired.
    \end{proof}
\end{proposition}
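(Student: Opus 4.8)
The plan is to verify the Tarski--Vaught criterion for the inclusion \(V_{\omega+1}\cap \tn{HOD}\preceq V_{\omega+1}\). First I would record two structural facts. Every element of \(V_\omega\) is definable without parameters, so \(V_\omega\subseteq \tn{HOD}\), and consequently \(V_{\omega+1}\cap \tn{HOD}\) is precisely the collection of ordinal definable subsets of \(V_\omega\). Moreover, the first-order formulas over \(\langle V_{\omega+1},\in\rangle\) are exactly the projective formulas: quantification over \(V_{\omega+1}\) is quantification over reals, and quantification over \(V_\omega\) is quantification over integers. Hence it suffices to show that for every formula \(\varphi\) and every tuple of parameters \(\bar a\in V_{\omega+1}\cap \tn{HOD}\), if \(V_{\omega+1}\models \exists y\,\varphi(y,\bar a)\), then there is a witness \(y\) lying in \(\tn{HOD}\).

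Given such \(\varphi\) and \(\bar a\), I would code the finitely many parameters by a single real \(x\); since each component of \(\bar a\) is ordinal definable and \(x\) is definable from \(\bar a\), the real \(x\) is ordinal definable, so \(x\in \tn{HOD}\). The set \(A=\{y\in V_{\omega+1}:V_{\omega+1}\models\varphi(y,\bar a)\}\) is then a nonempty set of reals that is projective in \(x\). Now I would invoke the Moschovakis basis theorem \cite[6C.6]{moschovakis_book}: under projective determinacy the scaled projective pointclasses admit a definable basis, so \(A\) has a member \(a\) that is itself \(\Sigma^1_n(x)\) for some \(n<\omega\). A real that is \(\Sigma^1_n(x)\) is definable from \(x\) together with the finite G\"{o}del code of the defining formula, hence ordinal definable from \(x\); since \(x\in \tn{HOD}\) this gives \(a\in \tn{HOD}\). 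This \(a\) is the required witness, which completes the Tarski--Vaught verification and hence the elementarity.

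The main content, and the only place determinacy enters, is the basis theorem: I expect the sole genuine obstacle to be isolating the precise form needed, namely that projective determinacy yields, level by level through the periodicity theorems and the scale property of \(\Pi^1_{2n+1}\) and \(\Sigma^1_{2n+2}\), that every nonempty projective-in-\(x\) set contains a member definable from \(x\) alone. Everything else is bookkeeping. Finally, for the concluding assertion that projective determinacy holds in \(\tn{HOD}\), I would note that for each \(n\) the statement ``every \(\Sigma^1_n\) set is determined'' is a projective statement, i.e.\ a sentence over \(\langle V_{\omega+1},\in\rangle\); by the elementarity just established each such sentence that is true in \(V_{\omega+1}\) is true in \(V_{\omega+1}\cap \tn{HOD}\), so \(\tn{HOD}\models\) projective determinacy.
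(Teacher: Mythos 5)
Your proposal is correct and follows essentially the same route as the paper's proof: verify Tarski--Vaught by coding the parameters into a single real in \(\tn{HOD}\) and applying the Moschovakis basis theorem to produce a \(\Sigma^1_n(x)\) (hence ordinal definable) witness. You simply spell out the bookkeeping that the paper leaves implicit, including the closing observation that projective determinacy transfers to \(\tn{HOD}\) by elementarity.
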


Next, we will show that
 the hypothesis, that there is a proper
 class of cardinals $\lambda$
 where the symmetric extension of $V$
 by $\tn{Col}(\omega,\lambda)$ satisfies PD,
 implies that PD is true in every
 (set) forcing extension of $V$.

\begin{fact}
For every tree $T$ on
 $(\omega \times \omega) \times \kappa$
 for some ordinal $\kappa$,
 there is a tree $S$ on
 $\omega \times (\omega \times \kappa)$
 such that the relation
 $p[S] = \exists^\mbb{R} p[T]$
 holds in every (set)
 forcing extension of $V$.
We may write
 $S = \exists^\mbb{R} T$.
\end{fact}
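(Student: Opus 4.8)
The plan is to obtain $S$ from $T$ by a purely combinatorial relabeling of coordinates, exploiting the canonical identification of $(\omega \times \omega) \times \kappa$ with $\omega \times (\omega \times \kappa)$ via the reassociation map $((m,n),\xi) \mapsto (m,(n,\xi))$. Concretely, I would define $S$ to be the set of all finite sequences $\langle (m_i,(n_i,\xi_i)) : i < \ell \rangle$ for which $\langle ((m_i,n_i),\xi_i) : i < \ell \rangle \in T$. Since $T$ is closed under initial segments, so is $S$, and $S$ is manifestly a tree on $\omega \times (\omega \times \kappa)$. The essential feature of this definition is that it refers neither to reals nor to the ambient model: the reassociation bijection is absolute, so $S$ is computed from $T$ in exactly the same way in $V$ and in every forcing extension.

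First I would check that $p[S] = \exists^\mbb{R} p[T]$ as computed in $V$. A branch of $S$ has the form $(x,h)$ with $x \in \baire$ and $h \in {^\omega}(\omega \times \kappa)$; writing $h$ as a pair $(y,f)$ with $y \in \baire$ and $f \in {^\omega}\kappa$, the reassociation shows that $(x,h) \in [S]$ if and only if $(x,y,f) \in [T]$. Hence $x \in p[S]$ iff there exist $y \in \baire$ and $f \in {^\omega}\kappa$ with $(x,y,f) \in [T]$, iff there exists $y \in \baire$ with $(x,y) \in p[T]$, iff $x \in \exists^\mbb{R} p[T]$. This chain of equivalences is the whole content of the equality; note that it is precisely the step passing from $(x,y) \in p[T]$ to $\exists y\, (x,y) \in p[T]$ that converts the real coordinate $y$, which is \emph{kept} in the projection defining $p[T]$, into a \emph{projected} coordinate of $S$.

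Finally, for the absoluteness clause I would simply replay the identical argument inside an arbitrary set-forcing extension $V[G]$. The trees $S$ and $T$ are the same sets there, the reassociation bijection between their spaces of branches is computed identically, and the same chain of equivalences yields $p[S] = \exists^\mbb{R} p[T]$ as computed in $V[G]$. I do not anticipate a genuine obstacle here; the one point deserving care is that the two projections $p[S]$ and $\exists^\mbb{R} p[T]$ individually change from model to model as new reals $x$ and $y$ appear, yet their equality is preserved because both are governed by the single fixed tree $T$ through a model-independent correspondence. It is worth emphasizing that no large-cardinal or determinacy hypothesis enters: this is a general lemma about trees, and the construction $S = \exists^\mbb{R} T$ is uniform in $T$.
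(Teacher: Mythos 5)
Your proposal is correct and is essentially the paper's own argument: the paper simply declares $S$ to be $T$ reassociated as a tree on $\omega \times (\omega \times \kappa)$, and your verification of the branch correspondence and its absoluteness fills in exactly what that one-line proof leaves implicit.
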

\begin{proof}
Simply take $S$ to be $T$,
 but regarded as a tree on
 $\omega \times (\omega \times \kappa)$
 instead of a tree on
 $(\omega \times \omega) \times \kappa$.
\end{proof}

\begin{remark}
\label{projective_bootstrap}
Suppose we have a projective formula $\varphi$
 and we want to show that there is a tree $T$
 such that in every (set) forcing extension of $V$,
 $p[T]$ is the set defined by $\varphi$.
It suffices to define a sequence of trees
 $T_1^\Sigma, T_1^\Pi,
 T_2^\Sigma, T_2^\Pi,
 T_3^\Sigma, T_3^\Pi$, ...
 with the following properties:
 $T_1$ projects to the inner $\Sigma^1_1$
 subformula of $\varphi$
 in every forcing extension of $V$.
The existence of $T_1$ is guaranteed
 because $\Sigma^1_1$ sets of reals
 are $\omega$-Suslin.
Then we let $T_1^\Pi$ be a tree
 which projects to the complement of $p[T_1^\Sigma]$
 in every forcing extension.
The existence of $T_1^\Pi$ is guaranteed
 by the Shoenfield absoluteness theorem.
Then we let $T_2^\Sigma$ be
 $\exists^\mbb{R} T_1^\Pi$.
Then we use some hypothesis to find a tree
 $T_2^\Pi$ that projects to the complement
 of $p[T_2^\Sigma]$ in every forcing extension
 of $V$.
Then we let $T_3^\Sigma$ be
 $\exists^\mbb{R} T_2^\Pi$.
We proceed like this until we get the final
 tree, either $T_n^\Sigma$ or $T_n^\Pi$
 for some $n \in \omega$,
 that in every forcing extension
 projects to the set defined by $\varphi$.
\end{remark}

\begin{lemma}
\label{using_every_sym_satisfies_pd}
Let $\mbb{P}$ be a poset.
Let $\lambda$ be
 a strong limit cardinal such that
 $|\mbb{P}| < \lambda$.
Suppose that the symmetric extension of $V$
 by $\tn{Col}(\omega,{<}\lambda)$
 satisfies $\tn{PD}$.
Then projective statements are absolute
 between $V$ and generic extensions of $V$
 by $\mbb{P}$,
 and these extensions satisfy $\tn{PD}$.
\end{lemma}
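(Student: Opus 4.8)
The plan is to produce, for every projective formula $\varphi$, a pair of trees $T_\varphi, T_{\neg\varphi} \in V$ that project to $\{x : \varphi(x)\}$ and to its complement in every forcing extension of $V$ by a poset of size ${<}\lambda$, and that moreover compute $\varphi$ correctly in the symmetric extension $N := V(\mbb{R}^*)$ witnessing the hypothesis, where $\mbb{R}^* = \bigcup_{\alpha<\lambda}\mbb{R}^{V[G\restriction\alpha]}$ for a $\tn{Col}(\omega,{<}\lambda)$-generic $G$. These trees are built by induction on the complexity of $\varphi$, exactly following Remark~\ref{projective_bootstrap}: the $\Sigma^1_1$ base case and the existential step $T_{\exists y\,\psi} := \exists^\mbb{R} T_\psi$ are handled in $\zfc$, and the only step requiring the hypothesis is passing from a tree $T_n^\Sigma$ for a $\Sigma^1_n$ set $C$ to a tree $T_n^\Pi$ for its complement.

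For that complement step I would argue as follows. Since $N \models \tn{PD}$, the $\Pi^1_n$ set $B = \mbb{R}^N \setminus p[T_n^\Sigma]^N$ carries a definable (Moschovakis) scale in $N$ by the periodicity theorem, and the tree $S$ of this scale satisfies $p[S]^N = B$. Because the scale is definable and the real quantifier defining membership in $S$ ranges over $\mbb{R}^* = \bigcup_{\alpha<\lambda}\mbb{R}^{V[G\restriction\alpha]}$, the homogeneity of $\tn{Col}(\omega,{<}\lambda)$ lets me rewrite membership in $S$ in terms of the $V$-definable forcing relation of the collapse; hence $S \in V$, and I set $T_n^\Pi := S$. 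I then check that $T_n^\Sigma$ and $S$ project to complements in every $V[g]$ with $|g| < \lambda$: every real of such a $V[g]$ lies in $\mbb{R}^*$ (the collapse absorbs $g$), illfoundedness of the sections is absolute between $V[g]$, $N$, and $V[G]$, and $S, T_n^\Sigma$ already project to complements over all of $\mbb{R}^*$. The criterion that $\tn{Col}(\omega,\gamma)$-generics suffice to test ${\le}\gamma$-universal Baireness (the theorem following \cite{feng_ub}) is what makes this local check legitimate.

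Granting the trees, the two conclusions fall out quickly. First, write a projective sentence as $\exists x\,\varphi(x)$ (the universal case being dual). Then $V \models \exists x\,\varphi(x)$ iff $T_\varphi$ is illfounded in $V$, which is absolute to $V[H]$ since $|\mbb{P}| < \lambda$; as $T_\varphi$ computes $\varphi$ correctly in $V[H]$, this gives $V \models \exists x\,\varphi(x)$ iff $V[H] \models \exists x\,\varphi(x)$, the desired projective absoluteness. The same trees show that for a fixed real parameter $z \in V$ any projective $\psi(z)$ has the same truth value in $V$ and in $N$, using that $T_\psi$ and $T_{\neg\psi}$ project to complements in both models, so no branch can appear in $N$ without already existing in $V$. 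Now the statement ``the $\Pi^1_n(z)$-game is determined'' is itself projective in $z$, so from $N \models \tn{PD}$ and absoluteness between $V$ and $N$ I conclude $V \models \tn{PD}$; and then from $V \models \tn{PD}$ together with absoluteness between $V$ and $V[H]$ (applied, for each $n$, to the projective sentence ``$\forall z$, the $\Pi^1_n(z)$-game is determined'') I conclude $V[H] \models \tn{PD}$.

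The main obstacle is the complement step of the induction, namely extracting $S \in V$ from a scale that lives in $N$. The content there is entirely the interplay between the definability of scales under $\tn{PD}$ and the homogeneity of $\tn{Col}(\omega,{<}\lambda)$: although $B$ and its scale involve reals outside $V$, one must see that membership in the scale tree is decided by the collapse's forcing relation and therefore reflects into $V$. The strong-limit hypothesis on $\lambda$ is used to guarantee, via the absorption argument underlying Lemma~\ref{sym_ext_lemma}, both that $N$ is a genuine symmetric extension and that every ${<}\lambda$-sized forcing, in particular $\mbb{P}$, is swallowed by an initial segment of the collapse.
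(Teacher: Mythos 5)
Your proposal is correct and follows essentially the same route as the paper: build trees for the projective pointclasses via the bootstrap of Remark~\ref{projective_bootstrap}, handle the complement step by taking the tree of a Moschovakis scale in the symmetric extension and pulling it back to $V$ by homogeneity of the collapse, absorb $\mbb{P}$ into a collapse of size $<\lambda$ to see the trees still project to complements in the $\mbb{P}$-extension, and then transfer PD along the resulting projective absoluteness. The paper phrases the complement step in terms of universal $\Sigma^1_n$/$\Pi^1_n$ sets and does the absorption explicitly with a two-step iteration $\mbb{P}'\ast\dot{\mbb{Q}}\cong\tn{Col}(\omega,\gamma)$, but these are only cosmetic differences from your formula-by-formula induction and your appeal to the Feng--Magidor--Woodin criterion.
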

\begin{proof}
We want to show that \textit{every}
 forcing extension of $V$ by $\mbb{P}$
 has some properties.
We make the following standard maneuver:
 fix $p \in \mbb{P}$ and let
 $\mbb{P}' := \mbb{P} \restriction p$.
We will show that \textit{some}
 forcing extension of $V$ by $\mbb{P}'$
 satisfies the appropriate properties.

Let $\gamma := |\mbb{P}'|$.
Fix a $\mbb{P}$-name $\dot{\mbb{Q}}$ such that
 $\mbb{P}' * \dot{\mbb{Q}}$ is forcing equivalent to
 $\tn{Col}(\omega,\gamma)$.
In fact, $\dot{\mbb{Q}}$ can be taken to be a name for
 $\tn{Col}(\omega,\gamma)$, but that does not matter.


Fix a $G$ that is
 $\tn{Col}(\omega,{<}\lambda)$-generic over $V$.
Let $\mbb{R}^* := \bigcup_{\alpha < \lambda}
 \{ \mbb{R}^{V[G \restriction \alpha]} \}$.
By hypothesis,
 the symmetric extension $V(\mbb{R}^*)$ satisfies PD.
From $G$, extract a set
 $G_2 \in V[G]$ that is $\tn{Col}(\omega,\gamma)$-generic
 over $V$.
Let $\mbb{R}^*_2$ be the set of reals in
 $V[G_2]$.

Now since $V(\mbb{R}^*)$ satisfies PD,
 for each $n \in \omega$,
 the universal $\Sigma^1_n$
 and $\Pi^1_n$ sets of $V(\mbb{R}^*)$
 are the projections of trees $T_n^\Sigma, T_n^\Pi \in V$.
That is, these universal sets are the projections
 of definable trees, and we may pull back
 by homogeneity to get that these trees
 $T_n^\Sigma, T_n^\Pi$ are in $V$.
Now in $V(\mbb{R}^*)$ we have that the appropriate
 trees project to complements,
 (which gives us that projective statements are
 absolute between $V$ and $V(\mbb{R}^*)$).
Since $V(\mbb{R}^*_2) \subseteq V(\mbb{R}^*)$,
 these trees also project to complements in $V(\mbb{R}^*_2)$.

Since $\tn{Col}(\omega,\gamma)$
 is forcing equivalent to $\mbb{P}' * \mbb{Q}$
 and $G_2$ is $\tn{Col}(\omega,\gamma)$-generic over $V$,
 fix a $G_1 \in V[G_2]$ that is
 $\mbb{P}'$-generic over $V$.
So now we have
 $$V \subseteq V[G_1] \subseteq V[G_2] \subseteq V[G].$$
Let $\mbb{R}^*_1$ be the set of reals in
 $V[G_1]$.
Since $V(\mbb{R}^*_1) \subseteq V(\mbb{R}^*_2)$,
 the trees described in the paragraph above
 project to complements in
 $V(\mbb{R}^*_1)$.
Hence projective statements are
 absolute between $V$ and $V(\mbb{R}^*_1)$.
Since $\mbb{R}^*_1$ is the set of reals of $V[G_1]$,
 we get that projective statements
 are absolute between $V$ and $V[G_1]$.
As a consequence of this,
 since $V$ satisfies PD, so does $V[G_1]$.
\end{proof}

\begin{theorem}
Suppose that every (set) forcing extension of $V$
 satisfies \tn{PD}.
Then every (set) forcing extension of
 $\tn{HOD}$ satisfies \tn{PD} as well.
\end{theorem}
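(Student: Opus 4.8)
The plan is to reduce the problem, via Lemma~\ref{using_every_sym_satisfies_pd} applied \emph{inside} $\tn{HOD}$, to showing that symmetric collapse extensions of $\tn{HOD}$ satisfy $\tn{PD}$, and to obtain the latter by transferring the corresponding fact about $V$ through Lemma~\ref{sym_ext_lemma}. Concretely, to prove that every forcing extension of $\tn{HOD}$ satisfies $\tn{PD}$, it suffices to show that for a proper class of strong limit cardinals $\lambda$, $\tn{HOD}$ believes that its symmetric extension by $\tn{Col}(\omega,{<}\lambda)$ satisfies $\tn{PD}$: given any $\mbb{Q} \in \tn{HOD}$, I would choose such a $\lambda > |\mbb{Q}|^{\tn{HOD}}$ and apply Lemma~\ref{using_every_sym_satisfies_pd} relativized to $\tn{HOD}$ (legitimate since $\tn{HOD} \models \zfc$ and the lemma is a theorem of $\zfc$), concluding that $\mbb{Q}$-generic extensions of $\tn{HOD}$ satisfy $\tn{PD}$. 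Note that every strong limit cardinal of $V$ is a strong limit cardinal of $\tn{HOD}$, since $\tn{HOD} \subseteq V$, so such $\lambda$ are plentiful.

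The heart of the argument is to show that the symmetric extension of $V$ satisfies $\tn{PD}$. I would fix a strong limit cardinal $\lambda$, emphasizing that I do \emph{not} assume $\lambda$ is inaccessible, because the bare hypothesis need not furnish inaccessible cardinals in $V$: cutting a model with $\omega$ Woodin cardinals at its least inaccessible produces a model in which $\tn{PD}$ holds in every forcing extension yet no inaccessible exists. Let $G$ be $\tn{Col}(\omega,{<}\lambda)$-generic over $V$ and set $\mbb{R}^* = \bigcup_{\alpha<\lambda} \mbb{R}^{V[G\restriction\alpha]}$, so that $V(\mbb{R}^*)$ is a symmetric extension of $V$. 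Each $V[G\restriction\alpha]$ is a set forcing extension of $V$ and hence satisfies $\tn{PD}$. Moreover the hypothesis that $\tn{PD}$ holds in all set forcing extensions yields that every projective set is universally Baire, and hence that projective statements with real parameters are absolute between any two of the models $V[G\restriction\alpha]$; equivalently, the bootstrap of Remark~\ref{projective_bootstrap} produces trees in $V$ projecting to the universal projective sets and to complements in every forcing extension of $V$.

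Using this I would verify $V(\mbb{R}^*) \models \tn{PD}$ as follows. Let $A$ be a projective subset of $\mbb{R}^*$ defined by a formula with a real parameter $r$, and fix $\alpha_0<\lambda$ with $r \in V[G\restriction\alpha_0]$. Since $V[G\restriction\alpha_0] \models \tn{PD}$, one player has a winning strategy $\sigma \in V[G\restriction\alpha_0]$ for the corresponding game. The assertion ``$\sigma$ is winning'' is projective in the parameters $\sigma,r$, so by projective generic absoluteness it continues to hold in every $V[G\restriction\beta]$ with $\beta \ge \alpha_0$; as every play lies in some $V[G\restriction\beta]$, the strategy $\sigma$ remains winning in $V(\mbb{R}^*)$. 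Thus $V(\mbb{R}^*) \models \tn{PD}$.

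It remains to pass to $\tn{HOD}$ and to internalize. By Lemma~\ref{sym_ext_lemma} there is a $G_2$ that is $\tn{Col}(\omega,{<}\lambda)$-generic over $\tn{HOD}$ with $\tn{HOD}(\mbb{R}^*)$ a symmetric extension of $\tn{HOD}$, and by the definition of symmetric extension $\mbb{R}^{\tn{HOD}(\mbb{R}^*)} = \mbb{R}^* = \mbb{R}^{V(\mbb{R}^*)}$. Since $\tn{PD}$ is a parameter-free assertion about the reals and both models have the same reals, $\tn{HOD}(\mbb{R}^*) \models \tn{PD}$. Because $\tn{Col}(\omega,{<}\lambda)$ is weakly homogeneous and is computed the same way in $\tn{HOD}$ and $V$, the parameter-free theory of the symmetric model is independent of the generic, so $\tn{HOD}$ believes that its symmetric extension by $\tn{Col}(\omega,{<}\lambda)$ satisfies $\tn{PD}$; letting $\lambda$ range over a proper class of strong limit cardinals completes the reduction of the first paragraph. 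I expect the main obstacle to be the middle step: extracting from the bare hypothesis enough projective generic absoluteness in $V$ to make winning strategies persist along the chain computing $\mbb{R}^*$, which is precisely where one must invoke (or reprove via the bootstrap) that $\tn{PD}$ in all set forcing extensions makes every projective set universally Baire.
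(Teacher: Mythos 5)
Your proposal is correct and follows essentially the same route as the paper: show the symmetric extension $V(\mbb{R}^*)$ satisfies PD, transfer it to $\tn{HOD}(\mbb{R}^*)$ via Lemma~\ref{sym_ext_lemma} using that PD depends only on the reals, and then apply Lemma~\ref{using_every_sym_satisfies_pd} relativized to $\tn{HOD}$. The only difference is that you spell out the step the paper leaves implicit (that the hypothesis forces $V(\mbb{R}^*)\models\tn{PD}$, via universal Baireness of projective sets and persistence of winning strategies along the chain), and your treatment of that step is sound.
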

\begin{proof}
Let $\lambda$ be a strong limit cardinal.
Let $G_1$ be $\tn{Col}(\omega,{<}\lambda)$-generic
 over $V$ and assume that the symmetric
 extension $V(\mbb{R}^*)$ of $V$ by
 $\tn{Col}(\omega,{<}\lambda)$ as witnessed by $G_1$
 satisfies PD.
Note that any model with $\mbb{R}^*$ as its
 set of reals also satisfies $\tn{PD}$.

By Lemma~\ref{sym_ext_lemma},
 there is some $G_2$ that is
 $\tn{Col}(\omega,{<}\lambda)$-generic
 over $V$ such that the symmetric extension
 of $\tn{HOD}$ by $\tn{Col}(\omega,{<}\lambda)$
 as witnessed by $G_2$ has $\mbb{R}^*$
 as its set of reals.
Thus this symmetric extension
 $\tn{HOD}(\mbb{R}^*)$ satisfies PD.
Then by Lemma~\ref{using_every_sym_satisfies_pd},
 every forcing extension of $\tn{HOD}$ by a poset
 of size $<\lambda$ satisfies PD.
\end{proof}

What about $\tn{AD}^{L(\mbb{R})}$?
We will show that if this holds in $V$,
 it need not hold in $\tn{HOD}$.

\begin{lemma}
Assume $V = L(\mbb{R})$
 and $\tn{AD}$ holds.
Then $\tn{HOD}$ satisfies
 that every real is $\tn{OD}^{L(\mbb{R})}$.
\end{lemma}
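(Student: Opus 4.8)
The plan is to identify the reals of $\tn{HOD}$ with the $\tn{OD}^{L(\mbb{R})}$ reals and then to produce, inside $\tn{HOD}$, a wellordering of these reals that is ordinal definable over $L(\mbb{R})^{\tn{HOD}}$. Since $V = L(\mbb{R})$, a real belongs to $\tn{HOD}$ iff it is ordinal definable, i.e.\ iff it is $\tn{OD}^{L(\mbb{R})}$; write $R_0$ for this set of reals. As $R_0 = \mbb{R}\cap\tn{HOD}$ is itself ordinal definable and every element of it is hereditarily ordinal definable, we have $R_0\in\tn{HOD}$ and $R_0 = \mbb{R}^{\tn{HOD}} = \mbb{R}^{L(R_0)}$, where $L(\mbb{R})^{\tn{HOD}} = L(R_0)$. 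Thus the statement to be proved is that $\tn{HOD}$ satisfies ``every element of $R_0$ is $\tn{OD}^{L(R_0)}$''. I would first record the reduction: it suffices to exhibit, in $\tn{HOD}$, a wellordering $W$ of $R_0$ that is ordinal definable over $L(R_0)$. Granting such a $W$, every $x\in R_0$ is the unique real in a fixed $W$-position, hence is $\tn{OD}^{L(R_0)}$ from that ordinal together with the definition of $W$; this is exactly the content of ``every real is $\tn{OD}^{L(\mbb{R})}$'' as interpreted in $\tn{HOD}$. So all the work lies in producing $W$.

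Next I would construct $W$ in $V$. Using that satisfaction over each set-sized level $L_\beta(\mbb{R})$ is definable, and that for each fixed $n$ the $\Sigma_n$-satisfaction relation of $L(\mbb{R})$ is definable (so that Tarski's theorem is not violated), one assigns to each $x\in R_0$ the lexicographically least triple $(\beta,\ulcorner\varphi\urcorner,\vec\gamma)$ witnessing that $x$ is the unique real definable over $L_\beta(\mbb{R})$ from the ordinals $\vec\gamma$ via $\varphi$, with $\beta$ large enough that the definition reflects to all of $L(\mbb{R})$. This map is definable over $L(\mbb{R})$ with no real parameters, hence ordinal definable, and the induced wellordering $W$ of $R_0$ is ordinal definable; being a set of pairs of $\tn{OD}^{L(\mbb{R})}$ reals, it is hereditarily ordinal definable, so $W\in\tn{HOD}$. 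At this stage $\tn{HOD}$ already has a wellordering of its reals.

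The main obstacle is the final transfer. The wellordering $W$ was defined using the levels $L_\beta(\mbb{R})$ of the \emph{full} $L(\mbb{R})$, whereas the ambient model inside $\tn{HOD}$ is $L(R_0)$, which has strictly fewer reals; so the defining formula for $W$ cannot simply be re-read there, and one must show that \emph{some} wellordering of $R_0$ is ordinal definable over $L(R_0)$ itself. Here I would invoke the determinacy hypothesis: under $\ad$ the $\tn{OD}^{L(\mbb{R})}$ reals carry a definable prewellordering (scale) whose comparison inspects only the OD reals, and I would argue that this comparison is definable over $L(R_0)$ from the parameter $R_0 = \mbb{R}^{L(R_0)}$, which is lightface definable there. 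Equivalently, one shows by induction along $W$ that each $x\in R_0$ is definable over $L(R_0)$ from an ordinal and $W$-earlier reals, and that the earlier real parameters can be absorbed, yielding $L(R_0)\models V=\tn{HOD}$. The delicate point is that the naive minimization ``the $W$-least real that is not $\tn{OD}^{L(R_0)}$'' is unavailable, since the predicate $\tn{OD}^{L(R_0)}$ is not definable over $L(R_0)$ by Tarski; the induction must therefore be driven by the $\ad$-supplied scale on $R_0$ rather than by a truth predicate. Verifying that ordinal definability of the OD reals survives the passage from $L(\mbb{R})$ to the thinner model $L(R_0)$ is the crux of the argument.
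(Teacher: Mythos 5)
The paper does not prove this lemma at all: it simply cites Theorem 1.9(b) of Steel's introduction to the Cabal volume on ordinal definability \cite{steel_cabal_3}. So any blind attempt is necessarily a ``different route,'' and the question is only whether yours closes. It does not. Your setup is fine: writing $R_0 = \mbb{R}\cap\tn{HOD} = $ the OD reals, noting $L(\mbb{R})^{\tn{HOD}} = L(R_0)$, and reducing the lemma to exhibiting a wellordering of $R_0$ that is ordinal definable \emph{over $L(R_0)$}, is a correct (if essentially tautological) reformulation -- the lemma is equivalent to $L(R_0)\models$ ``every real is OD.'' The construction of a wellordering $W$ of $R_0$ that is OD over the full $L(\mbb{R})$ is also routine. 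But the entire mathematical content of the lemma is the step you label the ``crux'' and then only gesture at: passing from definability over $L(\mbb{R})$ to definability over the thinner model $L(R_0)$. You offer two mechanisms and carry out neither. The first -- that under $\ad$ the OD reals carry a definable scale ``whose comparison inspects only the OD reals'' and is therefore definable over $L(R_0)$ -- is not true as stated for the natural candidate: the $\Sigma^2_1$ scale on the largest countable $\Sigma^2_1$ set has norms defined by formulas quantifying over all reals (indeed all sets of reals) of $L(\mbb{R})$, so reinterpreting them over $L(R_0)$ changes their meaning, which is precisely the problem you were trying to solve. The second -- an induction along $W$ showing each $x\in R_0$ is definable over $L(R_0)$ from ordinals and earlier reals, with the real parameters ``absorbed'' -- is asserted without any indication of why the inductive step holds.

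What is actually needed is the determinacy-theoretic structure theory that Steel's cited theorem packages: e.g., the identification of $\mbb{R}\cap\tn{HOD}^{L(\mbb{R})}$ with a set of reals each of which has a sufficiently absolute definition (roughly, each is $\Delta^2_1$ in a countable ordinal, via definitions that survive restriction to the relevant inner model), together with reflection to $L_{\delta^2_1}(\mbb{R})$. None of that appears in your argument, so as written the proof has a genuine gap at exactly the point where the hypothesis $\ad$ must do its work.
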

\begin{proof}
See Theorem 1.9 (b) of
 \cite{steel_cabal_3}.
\end{proof}

\begin{proposition}
Assume that $\zf + \ad$ is consistent.
Then there is a model of $\zfc + \ad^{L(\mbb{R})}$ whose
 $\tn{HOD}$ does not satisfy $\ad^{L(\mbb{R})}$.
\end{proposition}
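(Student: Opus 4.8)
The plan is to start from a model of $\zf + \ad$, pass to its $L(\mbb{R})$, and then force a well-ordering of the reals by a sufficiently closed poset, so that $L(\mbb{R})$ itself is left untouched. Concretely, using $\con(\zf + \ad)$ fix a model and let $N := L(\mbb{R})$ as computed inside it, so that $N \models \zf + \ad + V = L(\mbb{R})$; by Kechris's theorem $N \models \dc$ as well. Working in $N$, let $\mbb{P}$ be the poset of all injections $p : \gamma \to \mbb{R}$ with $\gamma < \omega_1$, ordered by end-extension. A countable increasing union of such conditions is again a condition, so $\mbb{P}$ is $\sigma$-closed; hence (using $\dc$) it adds no reals and preserves $\omega_1$. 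For each real $r$ the set of conditions with $r$ in their range is dense, so a generic $G$ produces a bijection between $\omega_1$ and $\mbb{R}^N$. Set $M := N[G]$.

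First I would verify that $M \models \zfc + \ad^{L(\mbb{R})}$. Since $\mbb{P}$ adds no reals, $\mbb{R}^M = \mbb{R}^N$, so $L(\mbb{R})^M = L(\mbb{R}^N) = N \models \ad$. For choice, note that in $N$ every set is $\tn{OD}$ from a real and ordinal parameters, so the generic well-ordering of $\mbb{R}^N$ well-orders all of $N$; as every element of $M = N[G]$ is the evaluation at $G$ of a $\mbb{P}$-name lying in $N$ (and names are thereby well-orderable in $M$), the map from names to their values gives a surjection from a well-orderable class onto $M$, yielding a global well-ordering and hence $M \models \zfc$.

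Next I would compute $\tn{HOD}^M$. The poset $\mbb{P}$ is definable without parameters in $N$, hence $\tn{OD}$, and it is weakly homogeneous: every permutation $\pi$ of $\mbb{R}^N$ induces the automorphism $p \mapsto \pi \circ p$, and given conditions $p,q$ one may choose $\pi$ to agree with $q\circ p^{-1}$ on the (countable, already well-ordered) common part and to extend injectively elsewhere, there being room since only countably many values are constrained. By Fact~\ref{when_v_grows_homog_hod_shrinks} this makes $\tn{HOD}^M$ a ground of $\tn{HOD}^N$, so $\mbb{R}^{\tn{HOD}^M} \subseteq \mbb{R}^{\tn{HOD}^N}$, which is the set of $\tn{OD}^N$ reals; conversely $N = L(\mbb{R})^M$ is definable in $M$, so every $\tn{OD}^N$ real is $\tn{OD}^M$. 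Thus $\mbb{R}^{\tn{HOD}^M} = \mbb{R}^{\tn{HOD}^N}$, and since $L(\mbb{R}^*)$ is computed absolutely from $\mbb{R}^*$ and the ordinals, $L(\mbb{R})^{\tn{HOD}^M} = L(\mbb{R})^{\tn{HOD}^N}$.

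Finally I would invoke the preceding Lemma: since $N \models \ad + V = L(\mbb{R})$, the model $\tn{HOD}^N$ satisfies that every real is $\tn{OD}^{L(\mbb{R})}$, i.e.\ $L(\mbb{R})^{\tn{HOD}^N} \models$ ``every real is ordinal definable.'' The canonical $\tn{OD}$ well-ordering of the reals then witnesses that $L(\mbb{R})^{\tn{HOD}^N} \models \zfc$, so in particular $L(\mbb{R})^{\tn{HOD}^N} \not\models \ad$; transporting along the equality of the previous paragraph gives $\tn{HOD}^M \not\models \ad^{L(\mbb{R})}$, as required. I expect the main obstacle to be precisely this last step: one must ensure that the $\tn{OD}^{L(\mbb{R})}$ well-ordering supplied by the Lemma genuinely lives inside the smaller model $L(\mbb{R})^{\tn{HOD}}$, which contains only the $\tn{OD}^{L(\mbb{R})}$ reals, rather than merely inside $\tn{HOD}$ itself; this is exactly the content that the Lemma (together with the identification of the reals of $\tn{HOD}^{L(\mbb{R})}$) is there to provide. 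The weak-homogeneity computation of $\tn{HOD}^M$ is the secondary point demanding care.
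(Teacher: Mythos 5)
Your argument is correct and essentially the same as the paper's: you force over $L(\mbb{R})$ with a cosmetic variant of $\tn{Col}(\omega_1,\mbb{R})$, use homogeneity in one direction and the definability of $L(\mbb{R})$ in the other to see that the $\tn{HOD}$ of the extension has the same reals (hence the same $L(\mbb{R})$) as $\tn{HOD}^{L(\mbb{R})}$, and then apply the preceding lemma that every such real is $\tn{OD}^{L(\mbb{R})}$. The only caveat worth noting is that Fact~\ref{when_v_grows_homog_hod_shrinks} is stated for $\zfc$ grounds while your ground model satisfies only $\zf+\dc$, but all you actually use is the inclusion $\tn{HOD}^{N[G]}\subseteq\tn{HOD}^{N}$ for an $\tn{OD}$ weakly homogeneous poset, which is the standard argument and goes through over $\zf$.
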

\begin{proof}
Start with a model of $\ad + V = L(\mbb{R})$.
Then force with $\mbb{P} := \tn{Col}(\omega_1,\mbb{R})$ to get the model
 $V[G]$, which satisfies the Axiom of Choice.
Since $V$ and $V[G]$ have the same reals,
 we have $L(\mbb{R})^{V[G]} = L(\mbb{R})$
 so $V[G]$ satisfies
 $\tn{AD}^{L(\mbb{R})}$.

We claim that $\tn{HOD}$ and $\tn{HOD}^{V[G]}$ have
 the same reals.
First, since $L(\mbb{R})$ is a definable inner model
 of $V[G]$, we get that every real in
 $\tn{HOD}^{L(\mbb{R})} = \tn{HOD}^V$
 is in $\tn{HOD}^{V[G]}$.
On the other hand,
 every real in $\tn{HOD}^{V[G]}$ is in $\tn{HOD}$,
 because of the homogeneity of $\mbb{P}$.
This proves the claim.
Thus
 $$L(\mbb{R})^{\tn{HOD}} =
 (L(\mbb{R})^{\tn{HOD}})^{V[G]}.$$

Now by the lemma above,
 the $\tn{HOD}$ of $V[G]$ cannot satisfy $\ad^{L(\mbb{R})}$,
 because $\ad$ implies there is some real that is not in $\tn{HOD}$.
\end{proof}

\begin{remark}
Assume there is a proper class of inaccessible cardinals,
 to make the discussion easier.
Assume that $\ad^{L(\mbb{R})}$ holds in every (set)
 forcing extension of $V$.
We will show that $\ad^{L(\mbb{R})}$ holds in every (set)
 forcing extension of $\tn{HOD}$.
For any inaccessible cardinal $\delta$
 of $V$,
 by Theorem~\ref{ad_in_ext_down_to_hod},
 the $\tn{Col}(\omega,{<}\delta)$ extension of $\tn{HOD}$
 satisfies $\ad^{L(\mbb{R})}$.
This is enough,
 by the proof of Theorem 5.2 in \cite{feng_ub},
 to get that $\tn{HOD}$ satisfies that
 every set of reals in its own $L(\mbb{R})$
 is universally Baire.
This implies that $\tn{HOD}$ satisfies that
 $\mbb{R}^\#$ exists and is universally Baire.
Now let $G$ be set generic over $\tn{HOD}$.
Also by the proof of Theorem 5.2 in \cite{feng_ub},
 there is a tree $T \in \tn{HOD}$ on $\omega \times \kappa$
 for some ordinal $\kappa$ such that
 $$\tn{HOD} \models \bigg[ \mbb{R}^\# = p[T] \bigg]$$
 and
 $$\tn{HOD}[G] \models \bigg[ \mbb{R}^\# = p[T] \bigg].$$
That is, the $\mbb{R}^\#$ of the forcing extension
 $\tn{HOD}[G]$ is also the projection of $T$.
This is enough to get that the theory of $L(\mbb{R})$
 in both $\tn{HOD}$ and $\tn{HOD}[G]$ is the same:
 $$L(\mbb{R})^\tn{HOD} \equiv L(\mbb{R})^{\tn{HOD}[G]}.$$
Now since some forcing extension of $\tn{HOD}$
 satisfies $\ad^{L(\mbb{R})}$,
 all of them do.
\end{remark}

\section{Downward to $\tn{HOD}$ theorem (main theorem)}

Here is the fundamental $\Sigma^2_1$ Basis Theorem,
 due to Woodin:

\begin{fact}[$\Sigma^2_1$ Basis Theorem]
\label{sigma_2_1_reflection}
Assume $\zf + \tn{AD}^+$.
Let $\mc{S}$ be a non-empty collection of sets of reals
 which is $\Sigma^2_1$ using a fixed set of reals
 $B$ as a parameter.
Let $\vec{\phi}, \vec{\psi}$ be scales on
 $B$ and its complement.
Then there is some $A \in \mc{S}$ that is
 $\Delta^2_1(\vec{\phi}, \vec{\psi})$.
\end{fact}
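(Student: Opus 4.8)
The plan is to treat this as the $\ad^+$ analogue, one level up, of the classical Kond\^o--Addison $\Sigma^1_2$ basis theorem: there, a nonempty $\Sigma^1_2$ set, written $\exists^{\baire} P$ with $P\in\Pi^1_1$, is Suslin via the $\Pi^1_1$-scale, and the leftmost branch of the associated tree is a $\Delta^1_2$ member. Here the role of the $\Pi^1_1$-scale is played by the given scales $\vec\phi$ on $B$ and $\vec\psi$ on its complement, and the role of $\exists^{\baire}$ is played by the second-order existential set quantifier in the $\Sigma^2_1$ definition of $\mc{S}$. First I would unpack the hypotheses: since $\mc S$ is $\Sigma^2_1$ with parameter $B$, fix a formula $\psi$ with
$$A\in\mc S \iff (\exists D\subseteq\baire)\,\langle H(\omega_1),\in,A,B,D\rangle\models\psi(A,B,D).$$
Because $\mc S\neq\emptyset$, the sentence $(\exists A)(\exists D)\,\psi$ holds; combining the two set quantifiers, the truth of ``$\mc S\neq\emptyset$'' is a single $\Sigma^2_1(B)$ sentence witnessed by some set of reals $E$ coding a pair $(A,D)$. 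The goal is to replace $E$ by a canonical witness whose first coordinate is $\Delta^2_1(\vec\phi,\vec\psi)$.

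Next I would set up the Suslin machinery. The scales $\vec\phi,\vec\psi$ yield trees $T,\check T$, both definable from the scales, with $B=p[T]$ and $\baire\setminus B=p[\check T]$; under $\ad^+$ these propagate to a Suslin representation of the first-order-over-$H(\omega_1)$ matrix $\psi$, with occurrences of $B$ handled positively by $T$ and negatively by $\check T$, and the resulting tree definable from $\vec\phi,\vec\psi$ alone. The remaining difficulty is the \emph{second-order} quantifier over the witnessing sets of reals $A$ and $D$. Here I would invoke the Moschovakis Coding Lemma relative to the prewellorderings supplied by the norms of $\vec\phi,\vec\psi$: under $\ad$ this codes each relevant witnessing set of reals by a real relative to those prewellorderings, reducing the set quantifier to a real quantifier whose complexity is still controlled by $\vec\phi,\vec\psi$.

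With the second-order quantifiers reduced, the assertion ``$E$ witnesses $\mc S\neq\emptyset$'' becomes, relative to $\vec\phi,\vec\psi$, the statement that a single tree $U$ on $\omega\times\lambda$ for some $\lambda<\Theta$, definable from $\vec\phi,\vec\psi$, is illfounded, with branches decoding to candidate witnesses. Since $\mc S\neq\emptyset$, $U$ is illfounded, so it has a leftmost branch, from which I decode a canonical set of reals $A$ whose definition refers only to $U$. This $A$ is $\Delta^2_1(\vec\phi,\vec\psi)$: it is simultaneously the object read off the leftmost branch (a $\Sigma^2_1(\vec\phi,\vec\psi)$ description) and characterized by the $\Pi^2_1(\vec\phi,\vec\psi)$ assertion that no branch of $U$ lies strictly to its left. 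By construction $A\in\mc S$, which completes the argument.

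I expect the main obstacle to be the reduction of the second-order set quantifiers over $A$ and $D$ to the Suslin/real level while keeping the final definability pinned to $\vec\phi,\vec\psi$. One must arrange, via the Coding Lemma, that the witnessing sets of reals are exactly those captured in the Suslin--co-Suslin algebra generated by the two scales, so that the leftmost branch yields a genuinely $\Delta^2_1(\vec\phi,\vec\psi)$ — and not merely $\Sigma^2_1$ — set; and one must verify that the $\ad^+$ closure properties of $\Sigma^2_1$ make the combined tree $U$ definable from the scales, with the negative occurrences of $B$ faithfully handled by $\check T$. Pinning down $\lambda$ and the decoding so that ``leftmost'' is itself expressible at the $\Delta^2_1(\vec\phi,\vec\psi)$ level is the delicate bookkeeping at the heart of the proof.
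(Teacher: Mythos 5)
The paper does not actually prove this fact; it cites the proof of Theorem 7.1 of Steel's \emph{Derived Model Theorem} notes and remarks that the parameterized version follows by a similar argument (while warning that the stronger relativization to $\Delta^2_1(B)$ claimed in Steel's Lemma 7.2 is false). So the comparison here is between your sketch and Woodin's actual proof as presented there, and your sketch has a genuine gap at exactly the point where the theorem's real content lies.

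The problem is the step where you ``invoke the Moschovakis Coding Lemma relative to the prewellorderings supplied by the norms of $\vec\phi,\vec\psi$'' to reduce the second-order quantifiers over $A$ and $D$ to real quantifiers controlled by the scales. The Coding Lemma codes subsets of the \emph{field of a prewellordering} (equivalently, subsets of an ordinal $\lambda<\Theta$) by reals in the pointclass generated by that prewellordering; it does not code arbitrary sets of reals. A witness $D$ to the $\Sigma^2_1$ statement is an arbitrary set of reals, of possibly much higher Wadge rank than anything projective in $\vec\phi,\vec\psi$, so there is no a priori reason the existential set quantifier can be restricted to the Suslin algebra generated by the two scales. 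Establishing that the quantifier \emph{can} be so restricted is precisely the content of Woodin's $\Sigma^2_1$ reflection under $\ad^+$ (every true $\Sigma^2_1$ statement reflects into the Suslin--co-Suslin sets, and indeed has a $\Delta^2_1$ witness), which is proved using the $\ad^+$-specific structure theory --- $\infty$-Borel codes, ${<}\Theta$-determinacy, and the scale property of $\Sigma^2_1$ itself, established by an induction through the Wadge hierarchy --- not by an analogy with Kond\^o--Addison. Once that reflection is in hand, your leftmost-branch/minimization endgame is the right shape (and this is how one sees that the witness is $\Delta^2_1$ in the \emph{scales} rather than in $B$ alone, which is why the paper's counterexample to $\Delta^2_1(B)$ does not apply); but as written your argument assumes the hard half of the theorem. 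A secondary, fixable issue: a leftmost branch of $U$ is a single pair $(x,f)$, and decoding a \emph{set of reals} from it (rather than a real, as in Kond\^o--Addison) needs to be spelled out, e.g.\ via a code for the witness relative to the prewellorderings, which again presupposes the reflection step.
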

\begin{proof}
See the proof of Theorem 7.1 in \cite{steel_dm}.
\end{proof}

\begin{remark}
The $\Sigma^2_1$ Basis Theorem, without the
 parameter $B$ is proved in Theorem 7.1 of
 \cite{steel_dm}.
The version with the parameter follows by a similar proof.
However unfortunately the generalized version
 stated in Lemma 7.2 of \cite{steel_dm} is not right.
There it is claimed that we can find some
 $A \in \mc{S}$ that is $\Delta^2_1(B)$.
We will now show a counterexample to this claimed result:

Assume $\zf + \ad^+ + \ad_\mbb{R}$ and let $B$ be the set of reals
 not in $\tn{HOD}$.
By our determinacy assumption,
 every set of reals is Suslin.
Hence $B$ is Suslin-co-Suslin.
Now let $\mc{S}$ be the set of scales for $B$.
This is a non-empty $\Sigma^2_1$ collection that
  uses $B$ as a parameter.
By the result claimed in Lemma 7.2 of \cite{steel_dm},
 fix a $C \in \mc{S}$ that is $\Delta^2_1(B)$.
Since $C$ is $\Delta^2_1(B)$, it is $\tn{OD}$.
Since $C$ is a scale for $B$,
 there is a tree $T \in \tn{HOD}$ on $\omega \times \tn{Ord}$
 for $B$.
Using the leftmost branch of $T$, we get that
 $B$ intersects $\tn{HOD}$.
This is a contradiction.

The statement of the incorrect Lemma 7.2
 in \cite{steel_dm} is also repeated
 in Theorem A.10 part (3) of
 \cite{sargsyan_msc_book}.
\end{remark}

Recall that $\Gamma^\infty$ is the collection of all
 universally Baire sets of reals.

\begin{lemma}[Tree Stretching]
\label{tree_stretching}
Assume that $\Gamma^\infty$ is
 closed under scales, sharps, and projections.
Let $B \subseteq \baire$ be such that
 $B$ and its complement have scales which are
 \tn{OD} and $\tn{uB}$.
Then $B$
 is \tn{uB} via \tn{OD} trees. 
\end{lemma}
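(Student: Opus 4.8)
The plan is to convert the two \tn{OD} \tn{uB} scales into a pair of \tn{OD} trees on $B$ and its complement that project to complements in every generic extension; the mechanism is a Martin--Solovay style ``stretching'' of the scale tree, whose generic correctness is powered by the universal Baireness of the norms (and cross-checked against Woodin's embedding Theorem~\ref{sem_engine}). By Remark~\ref{remark_equiv_less_lambda_ub} it suffices to fix an ordinal $\gamma$ and produce \tn{OD} trees $T,S$ on some $\omega\times\eta$ with $p[T]=B$ that project to complements in $V^{\tn{Col}(\omega,\gamma)}$.

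First I would assemble the \tn{OD} \tn{uB} data. Let $\vec{\phi}=\langle\phi_n\rangle$ and $\vec{\psi}=\langle\psi_n\rangle$ be the \tn{OD} \tn{uB} scales on $B$ and $\baire\m B$, and let $A$ be the \tn{OD} set of reals coding the associated prewellordering relations $\le_n^\phi,\le_n^\psi$. Then $A\in\Gamma^\infty$ because each relation is \tn{uB} and $\Gamma^\infty$ is closed under projections; for the same reason $B$ and $\baire\m B$, being recursive in $A$, lie in $\Gamma^\infty$ and hence are \tn{uB}. For each $n$ I would fix the \tn{OD}-least pair of trees $(U_n,W_n)$ witnessing that $\le_n^\phi$ is $\le\gamma$-\tn{uB}; these are \tn{OD} because $\le_n^\phi$ is, and they reinterpret coherently in $V^{\tn{Col}(\omega,\gamma)}$.

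Next I would build the two trees. Let $T$ be the (\tn{OD}) tree of the scale $\vec{\phi}$, so $p[T]=B$. For the complement I would \emph{not} use the naive scale tree of $\vec{\psi}$ (a real newly added by the forcing can fail to be a scale-limit of ground-model reals on either side, so the two naive scale trees need not cover $\baire^{V[G]}$); instead I would let $S$ be the Martin--Solovay tree built from the \tn{OD} \tn{uB} trees $\langle U_n\rangle$ of the norms of $\vec{\phi}$. This $S$ is \tn{OD}, and the usual Martin--Solovay analysis gives $p[S]=\baire\m B$ in $V$. Disjointness, $p[T]\cap p[S]=\emptyset$, then comes for free in every $V[G]$: the tree of pairs of branches of $T$ and $S$ over a common real is wellfounded in $V$, and wellfoundedness is absolute.

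The crux is the covering $p[T]\cup p[S]=\baire$ in $V[G]=V^{\tn{Col}(\omega,\gamma)}$, i.e. the verification that the stretched tree $S$ still captures the complement of $p[T]$ after forcing. Here I would run the Martin--Solovay absoluteness argument: since the $U_n$ are $\le\gamma$-\tn{uB}, their reinterpretations in $V[G]$ still present the (reinterpreted) norms, so for any $x\in\baire^{V[G]}\m p[T]$ the attempted construction of a branch through $S_x$ succeeds exactly as in $V$, placing $x\in p[S]$. As a sanity check this matches Theorem~\ref{sem_engine}: the embedding $j:L(A,\mbb{R})\to L(A^*,\mbb{R})^{V[G]}$ with $j(A)=A^*$ sends $\vec{\phi}$ to a scale on $B^*=j(B)$ and $S$ to its Martin--Solovay tree, and elementarity yields $p[j(T)]\cup p[j(S)]=\baire^{V[G]}$; the content of the stretching is precisely that the \tn{OD} trees $T,S$ themselves, and not merely their $j$-images, realize this partition. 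I expect this Martin--Solovay/destretching computation to be the main obstacle, since it is where the universal Baireness of the norms is genuinely used and where one must rule out new reals slipping into the gap between $p[T]$ and $p[S]$; everything else (the \tn{OD}-ness bookkeeping, the reduction to a fixed $\gamma$, and the automatic disjointness) is routine.
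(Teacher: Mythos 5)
There is a genuine gap, and it sits exactly where you predicted: the covering step $p[T]\cup p[S]=\baire^{V[G]}$. Your plan is to build the two trees \emph{in $V$} (the tree of the scale $\vec\phi$, plus a ``Martin--Solovay tree built from the \tn{uB} trees of the norms'') and then verify in $V[G]$ that they still partition the reals. But the Martin--Solovay construction is powered by a homogeneity system --- countably complete measures/towers that let you rank the failed attempts to put $x$ into $p[T]$ --- and you have no such measures here, only \tn{uB} presentations of the norm relations. The assertion that ``the attempted construction of a branch through $S_x$ succeeds exactly as in $V$'' is precisely the statement to be proved, not a consequence of the $U_n$ being $\le\gamma$-\tn{uB}: knowing that the reinterpreted relations $\le_n^{\phi,*}$ exist in $V[G]$ does not tell you they form a scale on the reinterpretation of $B$, nor that a new real omitted by $p[T]$ is caught by $p[S]$. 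You flag this yourself in your last sentence (``the content of the stretching is precisely that the \tn{OD} trees $T,S$ themselves, and not merely their $j$-images, realize this partition''), but that is the whole lemma, and your write-up does not close it.

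The paper's proof sidesteps this by \emph{not} constructing the trees in $V$ at all. Force with $\tn{Col}(\omega,\kappa)$; by Theorem~\ref{sem_engine} the embedding $j$ sends the \tn{OD} \tn{uB} scales $\vec\eta,\vec\chi$ to scales $\vec\eta^*,\vec\chi^*$ on $B^*$ and its complement \emph{in $V[G]$}, so the trees of \emph{those} scales, computed in $V[G]$, automatically project to complements there. These trees are ordinal definable in $V[G]$ from the parameter $S=(V_\lambda)^V$ (because the reinterpretation of an \tn{OD} set is $\tn{OD}_{\{S\}}$ in the extension), and since $\tn{Col}(\omega,\kappa)$ is a homogeneous \tn{OD} forcing, anything $\tn{OD}_{\{S\}}$ in $V[G]$ with $S$ \tn{OD} in $V$ is already \tn{OD} in $V$. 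So the $j$-images \emph{are} the desired \tn{OD} trees of $V$; there is no need for them to be the trees of the original scales as computed in $V$, only for them to be ordinal definable. The lesson is that the definition of ``\tn{uB} via \tn{OD} trees'' asks only for \tn{OD}-ness, and homogeneity of the collapse is the device that converts definability in the extension into definability in $V$ --- replacing the Martin--Solovay machinery your proposal would need.
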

\begin{proof}
Let $\vec{\eta}$ and $\vec{\chi}$ be scales
 for $B$ and its complement respectively
 that are both $\tn{OD}$ and $\tn{uB}$.
We must show that $B$ is $\tn{uB}$
 via $\tn{OD}$ trees.

Since $B, \vec{\eta}, \vec{\chi}$
 are all uB,
 fix a uB set of reals $D$ such that
 $$B, \vec{\eta}, \vec{\chi} \in L(D,\mbb{R}).$$

We must show that $B$ is $\tn{uB}$ via $\tn{OD}$
 trees.
To do this, fix a cardinal $\kappa$
 and force over $V$
 with $\mbox{Col}(\omega,\kappa)$.
Let $$B^*, D^*, \vec{\eta}^*, \vec{\chi}^*$$
be the reinterpretations
 of $$B,D,\vec{\eta},\vec{\chi}$$
 in $V[G]$.
Using Theorem~\ref{sem_engine} and the fact
 that $\Gamma^\infty$ is
 closed under scales, sharps, and projections,
 fix an elementary embedding
 $$j : L(B,D,\vec{\eta},\vec{\chi},\mathbb{R}) \to
 L(B^*, D^*, \vec{\eta}^*,\vec{\chi}^*,\mathbb{R})^{V[G]}$$
 that maps $B$ to $B^*$,
 $D$ to $D^*$, etc.

By elementarity,
 $\vec{\eta}^*,\vec{\chi}^*$ are scales
 for $B^*$ and its complement (in $V[G]$).
Because $\vec{\eta}$ and $\vec{\chi}$
 are both $\tn{OD}$ (in $V$),
 the scales $\vec{\eta}^*$ and $\vec{\chi}^*$
 are both OD in $V[G]$ using some
 fixed $(V_\lambda)^V$ (which we will call $S$)
 as a parameter.
That is,
 $$\vec{\eta}^*, \vec{\chi}^* \in
 \tn{OD}_{\{S\}}^{V[G]}.$$
Thus we now fix trees
 $$T^+, T^- \in \tn{HOD}_{\{S\}}^{V[G]}$$
 that project to complements (in $V[G]$)
 and $B^* = p[T^+]$.
Then since $\mbox{Col}(\omega,\kappa)$ is homogeneous
 and $\tn{OD}$
 and since $S$ is $\tn{OD}$
  we have that
 $T^+$ is ordinal definable in $V$.
Hence, $T^+ \in \tn{HOD}$.
Similarly,
 $T^- \in \tn{HOD}$.

So we have that $T^+, T^- \in \tn{HOD}$
 project to $B$ and its complement (in $V$),
 and they project to complements
 in $V[G]$.
This shows that $B$ is
 ${{\le} \kappa}$-$\tn{uB}$ via \tn{OD} trees.
Unfixing $\kappa$, we have shown that
 $B$ is $\tn{uB}$ via $\tn{OD}$ trees.
\end{proof}

\begin{theorem}[Main Theorem]
\label{downward_to_hod_without_pc_woodins}
Assume that $\Gamma^\infty$
 is strongly determined and
 closed under scales, sharps, and projections
 (see Definition~\ref{def_enriched}).
Let $C$ be a set of reals such that it and its
 complement have scales which are $\tn{OD}$ and $\tn{uB}$.
Let $\varphi(C)$ be a true
 $(\Sigma^2_1)^{\tn{uB}}$ statement that uses $C$
 as a parameter.
Then $\varphi(C \cap \tn{HOD})$ is true in $\tn{HOD}$.
\end{theorem}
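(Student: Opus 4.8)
The plan is to combine the $\Sigma^2_1$ Basis Theorem, which produces a witness with a simple definition, with the Tree Stretching Lemma, which turns that definition into trees living in $\tn{HOD}$. The one real subtlety is that we cannot appeal to generic absoluteness or to Woodin's embedding theorem \emph{inside} $\tn{HOD}$, since $\tn{HOD}$ is not assumed to satisfy any closure hypothesis; so the descent must be arranged to use only the fact that $\tn{HOD}$ has fewer reals than $V$ while agreeing with it on those it has.

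First I would put $\varphi(C)$ into a normal form that strips out the inner complexity of $\psi$. Working in a model $L(D,\mathbb{R})$ of $\tn{AD}^+$, where $D\in\Gamma^\infty$ is chosen so that $C$, the given scales on $C$ and $C^c$, and some witness all lie in $L(D,\mathbb{R})$, I would Skolemize the satisfaction relation $\langle H(\omega_1),\in,A_0,C\rangle\models\psi$: using uniformization (available from closure under scales and projections), I fold a system of Skolem functions into the witness to obtain a universally Baire set $A$ for which membership in the resulting solution set $\mathcal{S}'$ is expressed by a condition of the form $(\forall x)\,\theta(x,A,C)$ with $\theta$ arithmetic in the predicates. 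Thus $\mathcal{S}'$ is a non-empty $\Sigma^2_1(C)$ collection, and a set belongs to it exactly when a $\Pi^1_1$-in-$(A,C)$ condition holds.

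Next I would apply the $\Sigma^2_1$ Basis Theorem (Fact~\ref{sigma_2_1_reflection}) to $\mathcal{S}'$ using the given scales $\vec\eta,\vec\chi$ on $C$ and $C^c$, obtaining a witness $A^*\in\mathcal{S}'$ that is $\Delta^2_1(\vec\eta,\vec\chi)$. Since $\vec\eta,\vec\chi$ are $\tn{OD}$, so is $A^*$; and since $A^*$ is $\Delta^2_1$ from $\tn{OD}$, $\tn{uB}$ scales, it inherits (via the scale property under $\tn{AD}^+$ together with closure under scales and projections) scales on $A^*$ and its complement that are again $\tn{OD}$ and $\tn{uB}$. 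The Tree Stretching Lemma~\ref{tree_stretching} then shows $A^*$ is $\tn{uB}$ via $\tn{OD}$ trees, and likewise $C$ is $\tn{uB}$ via $\tn{OD}$ trees. Writing $\bar A:=A^*\cap\tn{HOD}$ and $\bar C:=C\cap\tn{HOD}$, the earlier equivalence lemma and fact give that $\bar A$ and $\bar C$ are $\tn{uB}$ in $\tn{HOD}$, with reinterpretations $A^*$ and $C$ in $V$.

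Finally I would carry out the descent. By construction $(\forall x)\,\theta(x,A^*,C)$ holds in $V$, with $\theta$ arithmetic. For any $x\in\mathbb{R}^{\tn{HOD}}\subseteq\mathbb{R}^V$, the real parameters occurring in $\theta(x,\cdot,\cdot)$ are recursive in $x$ and hence lie in $\tn{HOD}$, so $\bar A$ and $\bar C$ decide them exactly as $A^*$ and $C$ do; therefore $\theta(x,\bar A,\bar C)^{\tn{HOD}}\iff\theta(x,A^*,C)^V$. Since the right-hand side holds for every $x\in\mathbb{R}^V$, in particular for every $x\in\mathbb{R}^{\tn{HOD}}$, we conclude $\tn{HOD}\models(\forall x)\,\theta(x,\bar A,\bar C)$. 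Unwinding the Skolem normal form inside $\tn{HOD}$, this says precisely that $\bar A$ witnesses $\langle H(\omega_1),\in,\bar A,\bar C\rangle\models\psi$ in $\tn{HOD}$, so $\tn{HOD}\models\varphi(\bar C)$. I expect the main obstacle to be the first step: arranging the Skolem normal form so that the enlarged witness stays universally Baire (so that the Basis Theorem applies and the stretched trees land in $\tn{HOD}$), which is exactly where closure of $\Gamma^\infty$ under scales and projections, and strong determinacy, are needed. Everything else, including the descent itself, is then soft and requires no closure assumption on $\tn{HOD}$.
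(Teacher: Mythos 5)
Your architecture is essentially the paper's: apply Lemma~\ref{tree_stretching} to get $C$ (and the reflected witness) $\tn{uB}$ via $\tn{OD}$ trees, invoke the $\Sigma^2_1$ Basis Theorem (Fact~\ref{sigma_2_1_reflection}) with the given $\tn{OD}$, $\tn{uB}$ scales as the parameter data to replace the arbitrary $\tn{uB}$ witness by one that is $\Delta^2_1$ in those scales --- hence $\tn{OD}$ and $\tn{uB}$, and itself carrying $\tn{OD}$, $\tn{uB}$ scales --- and then argue that the restricted structure over $\tn{HOD}$ still satisfies $\psi$. The only divergence is the last step: the paper proves $\langle H(\omega_1)^{\tn{HOD}},\bar B,\bar C,\in\rangle \preceq \langle H(\omega_1),B,C,\in\rangle$ by the Tarski--Vaught test, using the fact that the \emph{same} invocation of the basis theorem yields $\Delta^2_1(\vec\varphi,\vec\psi)$ (hence $\tn{OD}$) scales $\vec\mu_n$ on universal $\Sigma^1_n(B,C)$ sets, so that every nonempty $\Sigma^1_n(B,C)$ set with parameters in $\tn{HOD}$ has a member ordinal definable from those parameters (leftmost branch) and therefore meets $\tn{HOD}$. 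You instead fold Skolem functions into the witness and descend a $\Pi^1_1$ normal form; these are two implementations of the same idea, namely definable uniformization of the existential quantifiers over $H(\omega_1)$.

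The gap is your closing claim that ``the descent itself is soft.'' It is not; the entire content of the theorem lives there. In your normal form $(\forall x)\,\theta(x,A^*,C)$, the atomic queries $A^*(z)$, $C(z)$ are made at the Skolem values $z$, and these are recursive in $x$ \emph{together with the oracle} $A^*$, not in $x$ alone; and the unwinding of the normal form back to $\langle H(\omega_1),\in,\bar A,\bar C\rangle\models\psi$ inside $\tn{HOD}$ requires that $\mathbb{R}\cap\tn{HOD}$ be closed under the Skolem functions --- a totality assertion that is $\Pi^1_2$ in form and does not follow from merely restricting the outer universal quantifier to $\tn{HOD}$ (as written, $(\forall x)\,\theta(x,\bar A,\bar C)$ could hold vacuously if the Skolem data in $\bar A$ were undefined on $\tn{HOD}$'s reals). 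Had you obtained the Skolem functions by raw uniformization in $L(D,\mathbb{R})$, giving only $\tn{uB}$ functions, the descent would fail. It succeeds only because you routed the enlarged witness through the basis theorem, making it $\Delta^2_1(\vec\eta,\vec\chi)$ and hence $\tn{OD}$, so that Skolem values at $\tn{HOD}$-real inputs are themselves $\tn{OD}$ reals and therefore lie in $\tn{HOD}$ (equivalently, one can code the Skolem values as sections of $A^*$ at reals recursive in the input, so that they are computed identically from $\bar A$ in $\tn{HOD}$). This closure argument must be made explicit; it is the exact analogue of the paper's $\vec\mu_n$ step, and once it is supplied your proof is correct.
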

\begin{proof}
By Lemma~\ref{tree_stretching},
 $C$ is $\tn{uB}$ via $\tn{OD}$ trees.
So let $\bar{C} := C \cap \tn{HOD}$.
The set $\bar{C}$ is in $\tn{HOD}$,
 is universally Baire in $\tn{HOD}$,
 and $C$ is the reinterpretation of $\bar{C}$
 in $V$.

Assume that $\varphi$ when written out is
 $$\varphi \Leftrightarrow
 (\exists X \in \tn{uB})\,
 \langle H(\omega_1), X, C, \in \rangle \models \psi.$$
 for some fixed formula $\psi$.
Fix a universally Baire set $A \subseteq \mathbb{R}$
 that witnesses that $\varphi$ is true.
That is,
 $$\langle H(\omega_1), A, C, \in \rangle \models \psi.$$

Let $\vec{\varphi}$ and $\vec{\psi}$ be scales
 for $C$ and its complement which are $\tn{OD}$
 and $\tn{uB}$.
We have that $C$ is universally Baire in $V$,
 so fix a universally Baire set of reals
 $D$ such that
 $$A,C,\vec{\varphi},\vec{\psi}
 \in L(D,\mathbb{R}).$$
Since $\vec{\varphi}, \vec{\psi} \in L(D,\mbb{R})$
 we get that $C$ is Suslin-co-Suslin in $L(D,\mathbb{R})$.
Since $\Gamma^\infty$ is strongly determined,
 the model $L(D,\mathbb{R})$ satisfies $\ad^+$,
 and so by the $\Sigma^2_1$ basis theorem
 (Fact~\ref{sigma_2_1_reflection})
 fix $B,\vec{\eta},\vec{\chi} \in L(D,\mathbb{R})$
 that are all $\Delta^2_1(\vec{\varphi},\vec{\psi})$
 in $L(D,\mathbb{R})$
 such that $\vec{\eta},\vec{\chi}$ are scales for $B$
 and its complement and
 $$\langle H(\omega_1), B, C, \in \rangle \models \psi.$$
Note that $B,\vec{\eta},\vec{\chi}$ are $\tn{OD}$
 and $\tn{uB}$.
In this same invocation of the $\Sigma^2_1$ basis theorem,
 for each $n$, get a $\Delta^2_1(\vec{\varphi},\vec{\psi})$
 scale $\vec{\mu}_n$ for a universal $\Sigma^1_n(B,C)$ set.
These scales will be useful later.
Let $\bar{B} := B \cap \tn{HOD}$
 and also note that
 $\bar{C} = C \cap \tn{HOD}$.

To see that $\bar{B}$ is universally Baire
 in $\tn{HOD}$, use Lemma~\ref{tree_stretching}.
That is, since $B$ and its complement
 have scales which are $\tn{OD}$ and $\tn{uB}$
 (as witnessed by
 the scales $\vec{\eta}$ and $\vec{\chi}$),
 it is uB via OD trees.

Finally, it suffices to show that
 $$\langle H(\omega_1)^{\tn{HOD}},
 \bar{B}, \bar{C}, \in \rangle
 \preceq \langle H(\omega_1), B, C, \in \rangle.$$
By the Tarski-Vaught Test,
 it suffies to show that every non-empty
 $\Sigma^1_n(B,C)$ set of reals intersects $\tn{HOD}$.
Earlier we found, for each $n \in \omega$,
 a $\Delta^2_1(\vec{\varphi},\vec{\psi})$
 scale $\vec{\mu}_n$ for a universal $\Sigma^1_n(B,C)$ set.
Since $\vec{\varphi},\vec{\psi}$ are both OD,
 the scale $\vec{\mu}_n$ is also OD.
This is enough.
\end{proof}

Because a proper class of Woodin cardinals implies
 that $\Gamma^\infty$ is strongly strongly
 determined and closed under scales, sharps, and projections,
 we have the following:
\begin{theorem}
\label{downwards_with_strongly_od_param_with_woodins}
Assume there is a proper class of Woodin
 cardinals.
Let $C$ be a set of reals such that it
 and its complement have scales
 which are $\tn{OD}$ and $\tn{uB}$.
Let $\varphi(C)$ be a true
 $(\Sigma^2_1)^{\tn{uB}}$ statement
 that uses $C$ as a parameter.
Then $\varphi(C \cap \tn{HOD})$ is true in $\tn{HOD}$.
\end{theorem}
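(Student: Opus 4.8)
The plan is to deduce this theorem directly from the Main Theorem (Theorem~\ref{downward_to_hod_without_pc_woodins}), since a proper class of Woodin cardinals supplies, via known results, exactly the abstract closure hypotheses on $\Gamma^\infty$ that the Main Theorem requires. The two statements have identical conclusions and identical hypotheses on $C$ and on $\varphi(C)$; the only difference is that here we assume large cardinals in place of the structural conditions that $\Gamma^\infty$ is strongly determined and closed under scales, sharps, and projections.

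First I would invoke the Martin--Steel--Woodin fact recorded in the Preliminaries, which asserts that if there is a proper class of Woodin cardinals, then $\Gamma^\infty$ is strongly determined and closed under scales, sharps, and projections. This is precisely the hypothesis of Theorem~\ref{downward_to_hod_without_pc_woodins}. Next, the hypothesis that $C$ and its complement have scales which are $\tn{OD}$ and $\tn{uB}$ is assumed verbatim in both theorems, as is the assumption that $\varphi(C)$ is a true $(\Sigma^2_1)^{\tn{uB}}$ statement using $C$ as a parameter. Thus I would simply apply the Main Theorem to conclude that $\varphi(C \cap \tn{HOD})$ holds in $\tn{HOD}$.

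There is no genuine obstacle here: all of the substantive work --- the Tree Stretching Lemma (Lemma~\ref{tree_stretching}) used to see that $C$ is $\tn{uB}$ via $\tn{OD}$ trees, the appeal to Woodin's embedding theorem (Theorem~\ref{sem_engine}), the $\Sigma^2_1$ Basis Theorem (Fact~\ref{sigma_2_1_reflection}), and the final Tarski--Vaught argument --- is carried out once and for all inside the proof of the Main Theorem. The sole point to verify is that the large cardinal assumption really does yield the abstract determinacy and closure properties of $\Gamma^\infty$, and this is exactly the content of the cited fact.
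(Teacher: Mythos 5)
Your proposal is correct and matches the paper's own argument exactly: the theorem is stated immediately after the remark that a proper class of Woodin cardinals implies $\Gamma^\infty$ is strongly determined and closed under scales, sharps, and projections, so it follows directly from Theorem~\ref{downward_to_hod_without_pc_woodins}. Nothing further is needed.
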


From the result above we get the following:
\begin{theorem}
\label{intro_main_downwards_result_no_param}
Assume there is a proper class of Woodin cardinals.
Let $\varphi$ be a true $(\Sigma^2_1)^\tn{uB}$ statement.
Then $\tn{HOD}$ satisfies $\varphi$.
\end{theorem}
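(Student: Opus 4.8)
The plan is to obtain this as the special case $C = \emptyset$ of Theorem~\ref{downwards_with_strongly_od_param_with_woodins}. By the definition given in the introduction, a $(\Sigma^2_1)^{\tn{uB}}$ statement $\varphi$ with no parameter is precisely a $(\Sigma^2_1)^{\tn{uB}}$ statement $\varphi(\emptyset)$ using the empty set as its set-of-reals parameter. Hence it suffices to verify that $\emptyset$ meets the hypothesis of the parametrized theorem, namely that both $\emptyset$ and its complement $\baire$ carry scales which are $\tn{OD}$ and $\tn{uB}$.

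First I would note that the empty set admits a scale vacuously, since there are no reals to be normed, and that this (empty) scale is trivially $\tn{OD}$ and $\tn{uB}$. For the complement $\baire$, I would exhibit the constant scale $\vec{\phi}$ defined by $\phi_n(x) = 0$ for every $n \in \omega$ and every $x$; the limit and lower-semicontinuity conditions defining a scale hold trivially because each norm is constant and the underlying set is the whole space. This scale is definable with no parameters, hence $\tn{OD}$, and it is coded by a trivial tree, hence $\tn{uB}$. Thus both $\emptyset$ and $\baire$ have scales that are $\tn{OD}$ and $\tn{uB}$, and the large cardinal hypothesis is not even needed for this verification.

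With the hypothesis confirmed, applying Theorem~\ref{downwards_with_strongly_od_param_with_woodins} yields that $\varphi(\emptyset \cap \tn{HOD})$ holds in $\tn{HOD}$. Since $\emptyset \cap \tn{HOD} = \emptyset$, this is exactly the assertion that $\tn{HOD} \models \varphi$, as desired. There is no genuine obstacle here; the only point requiring attention is that the trivial parameter $\emptyset$ actually satisfies the $\tn{OD}$-$\tn{uB}$-scale hypothesis, which is immediate from the explicit constant scale on $\baire$ described above.
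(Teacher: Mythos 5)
Your proposal is correct and matches the paper's route exactly: the paper derives this theorem as an immediate corollary of Theorem~\ref{downwards_with_strongly_od_param_with_woodins} by taking $C=\emptyset$, and your verification that $\emptyset$ and $\baire$ carry trivial $\tn{OD}$ and $\tn{uB}$ scales just spells out the (correct) details the paper leaves implicit.
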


\section{Having scales that are \tn{OD} and \tn{uB}}

In Theorem~\ref{downward_to_hod_without_pc_woodins},
 we have a special assumption on our parameter $C$:
 we assumed that $C$ and its complement have scales
 which were \textit{both} OD and uB at the same time.
The point of this section is to discuss that hypothesis.
Specifically, it is important we investigate the
 relationship between that concept and the perhaps
 strictly weaker concept of being \tn{uB} via \tn{OD} trees.

\begin{remark}
The following hold:
\begin{itemize}

\item
If a set of reals
 and its complement have scales which are
 \tn{OD} and \tn{uB}
 and there is a proper class of Woodin
 cardinals, then it is uB via OD trees
 (Lemma~\ref{tree_stretching}).

\item
If a set of reals either
 has a scale which is \tn{OD} and \tn{uB}
 or if the set is \tn{uB} via \tn{OD} trees,
 then it is both \tn{OD} and \tn{uB}.

\item
Let $A$ be a set of reals that is
 uB via $\tn{OD}$ trees.
The following hold:
\begin{itemize}
\item $A$ has an OD scale.
\item If in addition there is a proper class of
 Woodin cardinals, then $A$ has a $\tn{uB}$ scale.
\end{itemize}
\end{itemize}
\end{remark}


So if there is a proper class of Woodin cardinals
 and $A$ is a set of reals that is
 uB via $\tn{OD}$ trees,
 then $A$ has an OD scale and a uB scale
 (but we are not claiming that it has a scale
 that is \textit{both} OD and uB).

\begin{remark}
If a set of reals $A$ is OD and uB,
 it need not be uB via \tn{OD} trees.

That is, suppose that there are only countably
 many reals in $\tn{HOD}$.
Let $A$ be the set of reals that are
 \textit{not} in HOD.
Then $A$ is certainly OD.
It is also uB, because it is
 co-countable.
On the other hand,
 A does not have an OD scale,
 because if it did then some element
 of A would be in HOD.
Since A does not have an OD scale,
 it must not be uB via \tn{OD} trees.
\end{remark}

We ask the following:
\begin{question}
Assume there is a proper class of Woodin cardinals.
Let $C$ be a set of reals that is uB via \tn{OD} trees.
Must $C$ and its complement have scales
 which are both $\tn{OD}$ and $\tn{uB}$?
\end{question}

A positive answer to this conjecture would give us
 our downwards-to-HOD theorem with a weaker
 hypothesis on the parameter.

The following theorem explains the connection
 between the stronger downwards absoluteness to $\tn{HOD}$ conjecture
 and the closure properties of the universally Baire sets
 of $\tn{HOD}$:

\begin{theorem}
\label{conj_down_thm_iff_ub_closed_in_hod}
Assume there is a proper class of Woodin cardinals.
The following are equivalent:
\begin{itemize}
\item[1)] $\tn{HOD}$ satisfies that its universally Baire sets
 are projectively closed, closed under scales,
 and closed under sharps.
\item[2)] If $C$ is any set of reals that is $\tn{uB}$
 via $\tn{OD}$ trees, then $C$ and its complement have
  scales which are $\tn{OD}$ and $\tn{uB}$.
\item[3)]
If $C$ is any set of reals that is $\tn{uB}$
 via $\tn{OD}$ trees, then
 $(\Sigma^2_1)^{\tn{uB}}$ statements which use $C$
 as a parameter are downwards absolute from $V$ to $\tn{HOD}$.
\end{itemize}
\end{theorem}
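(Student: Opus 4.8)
The plan is to prove the cycle $(1)\Rightarrow(2)\Rightarrow(3)\Rightarrow(1)$, leaning throughout on the correspondence from the unlabeled lemma following Lemma~\ref{ub_in_hod_extends_to_ub_in_v}: a set $A$ is $\tn{uB}$ via $\tn{OD}$ trees iff $\bar{A} := A\cap\tn{HOD}$ is $\tn{uB}$ in $\tn{HOD}$ and $A$ is the reinterpretation of $\bar{A}$ in $V$. The implication $(2)\Rightarrow(3)$ is then essentially a restatement of the Main Theorem. Assuming $(2)$, if $C$ is $\tn{uB}$ via $\tn{OD}$ trees then $C$ and its complement have scales that are $\tn{OD}$ and $\tn{uB}$, so Theorem~\ref{downward_to_hod_without_pc_woodins} applies (its hypotheses hold since a proper class of Woodins makes $\Gamma^\infty$ strongly determined and closed under scales, sharps, and projections), giving that every true $(\Sigma^2_1)^{\tn{uB}}$ statement $\varphi(C)$ satisfies $\varphi(C\cap\tn{HOD})$ in $\tn{HOD}$, which is exactly the downward absoluteness of $(3)$.

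For $(3)\Rightarrow(1)$, I would fix an arbitrary $\bar{A}$ that is $\tn{uB}$ in $\tn{HOD}$, let $A$ be its reinterpretation (so $A$ is $\tn{uB}$ via $\tn{OD}$ trees and $\bar{A}=A\cap\tn{HOD}$), and express each closure property for $\bar{A}$ as a family of true $(\Sigma^2_1)^{\tn{uB}}$ statements with parameter $A$, then invoke $(3)$. For projections: for each projective formula $\theta$, the assertion ``the set defined by $\theta$ from $A$ is $\tn{uB}$'' has the form $\exists X\in\tn{uB}\,[\,\forall x\,(x\in X\leftrightarrow\theta(x,A))\,]$, which is $(\Sigma^2_1)^{\tn{uB}}(A)$ and is true in $V$ because the Woodins make $\Gamma^\infty$ closed under projections; downward absoluteness gives that the corresponding $\tn{HOD}$-set is $\tn{uB}$ in $\tn{HOD}$. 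Closure under sharps is handled identically via $\exists X\in\tn{uB}\,[\,X=(A,\mbb{R})^\#\,]$. For scales I would first apply $(3)$ to $\exists X\in\tn{uB}\,[\,X\text{ codes a scale on }A\,]$ to obtain in $\tn{HOD}$ a single $\tn{uB}$ set coding a scale on $\bar{A}$; the already-established closure under projections in $\tn{HOD}$ then makes each individual norm, being projective in that code, $\tn{uB}$ in $\tn{HOD}$, which is the form of closure under scales demanded by Definition~\ref{def_enriched}. Together these yield $(1)$.

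For $(1)\Rightarrow(2)$, fix $C$ that is $\tn{uB}$ via $\tn{OD}$ trees, so $\bar{C}=C\cap\tn{HOD}$ is $\tn{uB}$ in $\tn{HOD}$. By $(1)$, $\tn{HOD}$ has a scale $\vec{\phi}$ on $\bar{C}$ whose norms are $\tn{uB}$ in $\tn{HOD}$; since $\vec{\phi}\in\tn{HOD}$ it is $\tn{OD}$, and by Lemma~\ref{ub_in_hod_extends_to_ub_in_v} the reinterpretation $\phi_n^*$ of each norm is $\tn{uB}$ via $\tn{OD}$ trees in $V$, hence both $\tn{OD}$ and $\tn{uB}$. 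To see that $\langle\phi_n^*\rangle$ is a scale on $C$, I would amalgamate $\vec{\phi}$ and $\bar{C}$ into a single set $\bar{E}$ that is $\tn{uB}$ in $\tn{HOD}$ and apply Proposition~\ref{upwards_theorem_enriched}, whose hypothesis is precisely $(1)$: this yields an elementary embedding $j:L(\bar{E},\mbb{R})^{\tn{HOD}}\to L(E,\mbb{R})$ sending each $\tn{uB}$-in-$\tn{HOD}$ set to its $V$-reinterpretation, so by elementarity $j(\vec{\phi})=\langle\phi_n^*\rangle$ is a scale on $j(\bar{C})=C$. Running the same argument on the complement completes $(2)$.

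The step I expect to be the main obstacle is this last verification in $(1)\Rightarrow(2)$: confirming that the upward-absoluteness embedding of Proposition~\ref{upwards_theorem_enriched} acts as reinterpretation on each norm $\phi_n$ individually, and not merely on the amalgamated witness $\bar{E}$, so that its image is literally the sequence of reinterpreted norms and therefore genuinely a scale on $C$ with $\tn{OD}$ and $\tn{uB}$ components. A secondary delicate point, in $(3)\Rightarrow(1)$, is checking that ``$X=(A,\mbb{R})^\#$'' and ``$X$ codes a scale on $A$'' are genuinely first-order over $\langle H(\omega_1),\in,X,A\rangle$ so that they qualify as $(\Sigma^2_1)^{\tn{uB}}$, and that combining the scale-code with closure under projections recovers closure under scales in the exact sense of Definition~\ref{def_enriched}.
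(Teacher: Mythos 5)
Your proposal is correct and follows essentially the same route as the paper: the cycle $(1)\Rightarrow(2)\Rightarrow(3)\Rightarrow(1)$, with $(2)\Rightarrow(3)$ quoting the Main Theorem, $(3)\Rightarrow(1)$ obtained by applying downward absoluteness to the true $(\Sigma^2_1)^{\tn{uB}}$ statements asserting the relevant closure properties of the reinterpreted parameter, and $(1)\Rightarrow(2)$ obtained by reinterpreting a $\tn{uB}$-in-$\tn{HOD}$ scale and verifying it is a scale in $V$ via the upward elementary embedding of Proposition~\ref{upwards_theorem_enriched}. The two delicate points you flag (that $j$ sends each norm to its reinterpretation, and the first-order expressibility of the closure statements over $\langle H(\omega_1),\in,X,A\rangle$) are treated no more explicitly in the paper than in your sketch.
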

\begin{proof}
1) $\Rightarrow$ 2):
Fix a set of reals $C$ that is uB via \tn{OD} trees.
We will show that $C$ has an OD and uB scale in $V$.
A similar argument shows that
 the \textit{complement} of $C$
 has an OD and uB scale in $V$.
Fix a set of reals $\bar{C} \in \tn{HOD}$, that is
 uB in HOD, such that $C$ is the canonical
 reinterpretation of $\bar{C}$ in $V$.
Let $\vec{\eta} \in \tn{HOD}$ be such that
 $\tn{HOD}$ satisfies that $\vec{\eta}$
 is a uB scale for $C$.
The existence of $\vec{\eta}$ follows by 1).
Let $\vec{\eta}^*$ be the canonical reinterpretation
 of $\vec{\eta}$ in $V$.
Now by 1), since the uB sets of HOD are
 sufficently closed,
 there is an elementary embedding
 $$j : L(\mathbb{R},\bar{C},\vec{\eta})^\tn{HOD}
 \to L(\mathbb{R}, C, \vec{\eta}^*)$$
 such that $j(\bar{C}) = C$ and
 $j(\vec{\eta}) = \vec{\eta}^*$.
See Theorem~\ref{upwards_theorem}.
Hence $\vec{\eta}^*$ really is a scale
 for $C$ in $V$.
Also, $\vec{\eta}^*$ is OD and uB,
 because it is the canonical reinterpretation
 of a uB set $\vec{\eta}$ of HOD.

2) $\Rightarrow$ 3):
This follows from our main result
 (see Theorem~\ref{downwards_with_strongly_od_param_with_woodins}).

3) $\Rightarrow$ 1):
Assume 3).
We will just show that the uB sets of HOD
 are closed under scales.
Showing projective closure and closure under sharps
 is similar.
Let $\bar{C}$ be an arbitrary uB set of reals of HOD.
Let $C$ be the canonical reinterpretation
 of $\bar{C}$ in $V$.
So $C$ is uB via $\tn{OD}$ trees.
The true statement $\varphi(C)$ that there exists a uB
 scale for $C$ is $(\Sigma^2_1)^{\tn{uB}}$.
Thus by 3),
 $\tn{HOD}$ satisfies $\varphi(\bar{C})$.
In other words,
 $\tn{HOD}$ satisfies that there exists
 a uB scale for $\vec{C}$,
 which is what we wanted to show.
\end{proof}

The proof of our
 Theorem~\ref{downward_to_hod_without_pc_woodins}
 sheds some light onto the situation:

 \begin{proposition}
Assume that $\Gamma^\infty$ is
 closed under scales, sharps, and projections.
Let $\Lambda$ be the collection of sets of reals
 $C$ such that both $C$ and its complement
 have scales which are \tn{OD} and \tn{uB}.
Then $\Lambda$ has the scale property.
\end{proposition}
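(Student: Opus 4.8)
The plan is to re-run the reflection argument from the proof of Theorem~\ref{downward_to_hod_without_pc_woodins}, but now to reflect a scale on $C$ \emph{together with} scales on each of its norm relations and their complements; the Basis Theorem will deliver all of these at once as $\Delta^2_1$ objects in a fixed pair of OD, uB scales, and such objects are themselves OD and uB. Recall that $\Lambda$ having the scale property means that every $C \in \Lambda$ carries a scale $\langle \phi_n \rangle$ each of whose norms, coded by the prewellordering relations $\le_n^{*} = \{(x,y) : x \in C \text{ and } (y \notin C \text{ or } \phi_n(x) \le \phi_n(y))\}$ and $<_n^{*}$ (defined with strict inequality), lies in $\Lambda$, i.e.\ each such relation and its complement have scales that are OD and uB.

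So fix $C \in \Lambda$ and let $\vec\varphi, \vec\psi$ be scales on $C$ and its complement that are OD and uB. Since $C, \vec\varphi, \vec\psi$ are all uB, fix a uB set $D$ with $C, \vec\varphi, \vec\psi \in L(D,\mbb{R})$. Exactly as in the main theorem, strong determinacy gives $L(D,\mbb{R}) \models \ad^+$, and since $\vec\varphi, \vec\psi \in L(D,\mbb{R})$ are scales on $C$ and its complement, $C$ is Suslin--co-Suslin there.

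Now let $\mc{S}$ be the collection of tuples $(\langle \phi_n\rangle, \langle \sigma_n\rangle)$ such that $\langle \phi_n\rangle$ is a scale on $C$ and, for each $n$, $\sigma_n$ codes a quadruple of scales, one on each of $\le_n^{*}$, its complement, $<_n^{*}$, and its complement. Each defining clause is projective in the coded relations together with $C$, so $\mc{S}$ is a $\Sigma^2_1$ collection using $C$ as a parameter; it is non-empty in $L(D,\mbb{R})$, since there $C$ is Suslin--co-Suslin, whence the norm relations of any Suslin--co-Suslin scale on $C$ are again Suslin--co-Suslin, and Suslin sets carry scales. Applying the $\Sigma^2_1$ Basis Theorem (Fact~\ref{sigma_2_1_reflection}) inside $L(D,\mbb{R})$ with the scales $\vec\varphi, \vec\psi$ produces a member $(\langle \phi_n\rangle, \langle \sigma_n\rangle) \in \mc{S}$ all of whose components are $\Delta^2_1(\vec\varphi,\vec\psi)$.

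It remains to see that every component is OD and uB, which is precisely the step ``$B,\vec\eta,\vec\chi$ are OD and uB'' in the proof of Theorem~\ref{downward_to_hod_without_pc_woodins}: a set that is $\Delta^2_1(\vec\varphi,\vec\psi)$ in $L(D,\mbb{R})$ carries scales on itself and its complement that are $\Sigma^2_1(\vec\varphi,\vec\psi)$, by the scale property of $\Sigma^2_1$ under $\ad^+$; since $\vec\varphi,\vec\psi$ are OD these scales are OD, and since $D$ is uB and $\Gamma^\infty$ is closed under sharps and projections, the associated ordinal-definable Suslin representations lift out of $L(D,\mbb{R})$ to $V$ via Woodin's Theorem~\ref{sem_engine}, witnessing uB. In particular each of $\le_n^{*}$ and $<_n^{*}$, together with its complement, has an OD, uB scale (a component of $\sigma_n$), so each norm relation of $\langle \phi_n\rangle$ lies in $\Lambda$, and $\langle \phi_n\rangle$ is the desired $\Lambda$-scale on $C$. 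I expect this last paragraph to be the main obstacle: confirming that $\Delta^2_1(\vec\varphi,\vec\psi)$ objects of $L(D,\mbb{R})$ are genuinely OD and uB \emph{in} $V$, rather than merely OD from a real or uB inside $L(D,\mbb{R})$; both halves are already carried out in the proof of Theorem~\ref{downward_to_hod_without_pc_woodins}, so the cleanest write-up simply invokes that argument.
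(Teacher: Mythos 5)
Your proposal is correct and follows essentially the same route as the paper: reflect into $L(D,\mathbb{R})$ for a uB set $D$ above $C$ and its OD, uB scales $\vec\varphi,\vec\psi$, apply the $\Sigma^2_1$ Basis Theorem there to extract a scale on $C$ together with witnessing scales all of which are $\Delta^2_1(\vec\varphi,\vec\psi)$, and then observe that such objects are OD (from the OD parameters and the ordinal definability of the model) and uB (from closure of $\Gamma^\infty$ under sharps and projections). The only difference is cosmetic --- you track the individual norm relations $\le_n^{*},<_n^{*}$ where the paper treats the scale as a single coded set of reals $B$ and shows $B\in\Lambda$ directly.
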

\begin{proof}
Fix $C \in \Lambda$.
Let $\vec{\varphi},\vec{\psi}$ be scales for $C$ and its
 complement respectively that are both
 $\tn{OD}$ and $\tn{uB}$.
By symmetry, it suffices to show that $C$
 has a scale in $\Lambda$.
 
Let $D$ be a uB set of reals such that
 $$C,\vec{\varphi},\vec{\psi} \in L(D,\mathbb{R}).$$
Within $L(D,\mathbb{R})$ use the $\Sigma^2_1$ basis theorem
 to get a scale $B$ for $C$ that is
 $\Delta^2_1(\vec{\varphi},\vec{\psi})$.
Moreover, in this same invocation
 of the $\Sigma^2_1$ basis theorem,
 get scales $B^+$ and $B^-$ for $B$ and its complement
 that are both $\Delta^2_1(\vec{\varphi},\vec{\psi})$
 in $L(D,\mbb{R})$.
We will show that $B \in \Lambda$.

Since $B^+$ and $B^-$ are in $L(D,\mbb{R})$
 and $\Gamma^\infty$ is closed under sharps,
 we have that $B^+$ and $B^-$ are $\tn{uB}$.
Also
 we get that the model $L(D,\mbb{R})$ is OD,
 and combining this with the fact that
 $\vec{\varphi}$ and $\vec{\psi}$ are OD
 we have that $B^+$ and $B^-$ are OD.
Hence $B^+$ and $B^-$ are both OD and uB,
 which shows that $B$ and its complement have
 scales which are $\tn{OD}$ and $\tn{uB}$.
\end{proof}

\section{The $\Omega$ Conjecture}

Here is what our main result
 (Theorem~\ref{downward_to_hod_without_pc_woodins})
 tells us about $\Omega$-logic:

\begin{proposition}
\label{omega_theorems_go_down}
Assume there is a proper class of Woodin cardinals.
Then $\Omega$-theorems are downwards absolute from
 $V$ to $\tn{HOD}$.
That is,
 let $\varphi$ be a formula in the language of set theory
 and let $x \in \mbb{R} \cap \tn{HOD}$.
Let $T$ be any theory such that
 $T \in \tn{HOD}$.
Then if
 $$T \proves_\Omega \varphi(x),$$
 then
 $$\tn{HOD} \models \bigg[
 T \proves_\Omega \varphi(x)
 \bigg].$$
\end{proposition}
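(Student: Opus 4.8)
The plan is to unwind the definition of $\Omega$-provability and observe that the statement $T \proves_\Omega \varphi(x)$ is itself (essentially) a $(\Sigma^2_1)^{\tn{uB}}$ statement using a parameter to which our main theorem applies. Recall that $T \proves_\Omega \varphi(x)$ asserts the existence of a universally Baire set $A$ such that every countable transitive model $M \models T$ that is strongly $A$-closed satisfies $M \models (T \models_\Omega \varphi(x))$. The existential quantifier over uB sets, together with the fact that the matter "$M$ is a countable transitive model of $T$ that is strongly $A$-closed and $M \models (T\models_\Omega \varphi(x))$" is arithmetic in $A$, $M$, $T$, $x$ (the relevant reals live in $H(\omega_1)$), should let me package $T\proves_\Omega\varphi(x)$ as a statement of the form $(\exists A \in \Gamma^\infty)\,\langle H(\omega_1),\in,A,P\rangle \models \psi$, where $P$ codes the parameters $T$ and $x$.

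Let me **set up the parameter** carefully. I would let $C$ code the theory $T$ together with the real $x$; since $T \in \tn{HOD}$ and $x \in \mbb{R}\cap\tn{HOD}$, I want $C$ to be an $\tn{OD}$ parameter. The cleanest route is to take $C$ to be a set of reals that is $\tn{OD}$ and whose reinterpretation-to-HOD behaves correctly. Then $C$ and its complement must be shown to have scales that are both $\tn{OD}$ and $\tn{uB}$ so that Theorem~\ref{downward_to_hod_without_pc_woodins} (equivalently Theorem~\ref{downwards_with_strongly_od_param_with_woodins}) applies. First I would verify that $\varphi(x)$-as-an-$\Omega$-theorem is genuinely $(\Sigma^2_1)^{\tn{uB}}$ in this parameter; then, since a proper class of Woodin cardinals gives that $\Gamma^\infty$ is strongly determined and closed under scales, sharps, and projections, I invoke the main theorem to conclude $\tn{HOD} \models \varphi'(C\cap\tn{HOD})$, where $\varphi'$ is the packaged $(\Sigma^2_1)^{\tn{uB}}$ statement.

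The **main obstacle** I anticipate is two-fold. First, I must be careful that the quantity $M \models (T\models_\Omega\varphi(x))$ is computed \emph{inside} $M$, i.e.\ it refers to $\Omega$-validity as $M$ sees it, and that this is correctly expressible in $H(\omega_1)$ relative to $A$; this is the content of Woodin's definition and should be arithmetic once $A$, $M$, and the coding of $T, x$ are fixed. Second, and more delicately, I must ensure that when the main theorem transfers the statement to $\tn{HOD}$, the parameter $C$ reinterprets to the \emph{correct} coding of $T$ and $x$ inside $\tn{HOD}$—that is, $C \cap \tn{HOD}$ should still code $T$ and $x$ (which holds because $T, x \in \tn{HOD}$). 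Once these points are in place, the conclusion $\tn{HOD}\models[\,T\proves_\Omega\varphi(x)\,]$ follows directly from Theorem~\ref{downward_to_hod_without_pc_woodins}, since $\tn{HOD}$ will produce its own uB witness $A' = A\cap\tn{HOD}$ certifying the $\Omega$-theorem.

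Concretely, I would structure the write-up as: (1) recall the $(\Sigma^2_1)^{\tn{uB}}$-shape of "$T\proves_\Omega\varphi(x)$"; (2) specify the $\tn{OD}$ parameter $C$ coding $T$ and $x$ and note $C\cap\tn{HOD}$ recovers them in $\tn{HOD}$; (3) check the hypotheses of the main theorem (here the key is simply that a coded pair of ground-model parameters from $\tn{HOD}$ trivially has $\tn{OD}$-and-$\tn{uB}$ scales for $C$ and its complement, or one argues the parameter is unnecessary because $x$ is a \emph{real}, so $C$ can be taken as the empty set and $x$ absorbed into the formula via $H(\omega_1)$); and (4) apply the theorem to land the conclusion in $\tn{HOD}$. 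I expect step (3) to be where a subtlety hides, and the honest resolution is likely that since $x\in\mbb{R}$, no set-of-reals parameter is needed and Theorem~\ref{intro_main_downwards_result_no_param} (relativized to the real $x$) suffices.
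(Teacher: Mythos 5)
Your proposal is correct and follows essentially the same route as the paper: the paper's proof simply observes that ``$T \proves_\Omega \varphi(x)$'' is $(\Sigma^2_1)^{\tn{uB}}$ in the parameters $x$ and $T$ (the latter coded by a real in $\tn{HOD}$) and invokes Theorem~\ref{downward_to_hod_without_pc_woodins}. Your closing remark that the set-of-reals parameter machinery is not really needed because the parameters are reals in $\tn{HOD}$ is exactly the point the paper is implicitly relying on.
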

\begin{proof}
The statement
 ``$T \proves_\Omega \varphi(x)$''
 is $(\Sigma^2_1)^{\tn{uB}}$
 using $x$ and $T$ as parameters.
Note that $T$ can be coded by a real number in HOD.
So by Theorem~\ref{downward_to_hod_without_pc_woodins}
 the result follows.
\end{proof}

We will show two interesting applications of
 Proposition~\ref{omega_theorems_go_down}.
For the first application,
 we need a quick proposition
 on $\Omega$-validities
 (that does not use any Woodin cardinals):

\begin{proposition}
\label{omega_valid_go_up}
$\Omega$-validities are upward
 absolute from $\tn{HOD}$ to $V$.
In other words,
 $\Omega$-satisfaction is downwards absolute
 from $V$ to $\tn{HOD}$.
\end{proposition}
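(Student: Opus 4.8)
The plan is to unfold the $\Pi_2$ statement and reduce the claim to locating, for each set in a given forcing extension of $V$, a witness that lives inside a forcing extension of $\tn{HOD}$ contained in that same extension of $V$. First I would write the $\Pi_2$ statement as $\psi \equiv \forall x\,\exists y\,\theta(x,y)$ with $\theta$ a $\Delta_0$ formula, so that $\theta$ is absolute between transitive models containing its parameters. I assume $\psi$ holds in every set forcing extension of $\tn{HOD}$, and I fix an arbitrary set forcing extension $V[G]$ of $V$, say by a poset $\mbb{Q}\in V$ with $G$ generic over $V$. It then suffices to show that for every $x\in V[G]$ there is some $y\in V[G]$ with $\theta(x,y)$, for then $V[G]\models\psi$, and as $V[G]$ is an arbitrary extension this gives that $\psi$ is $\Omega$-valid in $V$.

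The key step is to capture both the poset $\mbb{Q}$ and a name for $x$ in a forcing extension of $\tn{HOD}$ that sits inside $V$. Fixing $x\in V[G]$, I choose a $\mbb{Q}$-name $\dot{x}\in V$ with $\dot{x}^G=x$, and apply Vop\v{e}nka's Theorem~\ref{theorem_vopenka} in $V$ to a single set coding the pair $(\mbb{Q},\dot{x})$. This yields a poset $\mbb{P}\in\tn{HOD}$ and a filter $h\in V$ that is $\mbb{P}$-generic over $\tn{HOD}$ with $\mbb{Q},\dot{x}\in\tn{HOD}[h]\subseteq V$. Since $\tn{HOD}[h]\subseteq V$, the generic $G$ meets every dense subset of $\mbb{Q}$ lying in $\tn{HOD}[h]$, so $G$ is in particular $\mbb{Q}$-generic over $\tn{HOD}[h]$. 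Hence $\tn{HOD}[h][G]$ is defined and contained in $V[G]$, and $x=\dot{x}^G\in\tn{HOD}[h][G]$, because $\mbb{Q},\dot{x},G$ all lie in $\tn{HOD}[h][G]$ and name-interpretation is absolute.

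Now $\tn{HOD}[h][G]$ is a two-step iterate $\mbb{P}*\dot{\mbb{Q}}$ over $\tn{HOD}$, hence a single set forcing extension of $\tn{HOD}$; by hypothesis $\psi$ holds there, so there is a witness $y\in\tn{HOD}[h][G]$ with $\theta(x,y)$. Since $\tn{HOD}[h][G]\subseteq V[G]$ and $\theta$ is $\Delta_0$, we get $y\in V[G]$ and $V[G]\models\theta(x,y)$, as required. The dual formulation about $\Omega$-satisfaction is merely the contrapositive of this statement. The main point requiring care — and the one genuinely clever move — is that one must apply Vop\v{e}nka to the \emph{name} $\dot{x}$ and the poset $\mbb{Q}$, both of which are members of $V$, rather than to $x$ itself (which need not belong to $V$), and then re-force with the very same generic $G$. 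This is exactly what keeps the resulting forcing extension of $\tn{HOD}$ inside $V[G]$, and it is why no large-cardinal hypothesis is needed: only Vop\v{e}nka's theorem and elementary two-step forcing are used.
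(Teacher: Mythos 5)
Your proof is correct, and its engine is the same as the paper's: apply Vop\v{e}nka's Theorem~\ref{theorem_vopenka} inside $V$ to land the relevant data in a generic extension $\tn{HOD}[h]\subseteq V$, observe that the original generic $G$ remains generic over $\tn{HOD}[h]$ because every dense set there already lies in $V$, and re-force, so that $\tn{HOD}[h][G]$ is a two-step (hence single) set-forcing extension of $\tn{HOD}$ sitting inside $V[G]$. The difference is in what gets captured and which formulation of $\Omega$-validity is addressed. The paper argues the contrapositive in the rank-initial-segment formulation: given $(V_\alpha)^{V[G]}\models\varphi$, it uses Vop\v{e}nka to absorb an entire $V_\lambda$ (for $\lambda$ a strong limit above $\alpha$ and the size of the forcing) into $\tn{HOD}[H]$, so that $\tn{HOD}[H][G]$ and $V[G]$ compute the same $V_\alpha$; this directly yields downward absoluteness of ``$T\models_\Omega\varphi$'' in the exact form invoked in Corollary~\ref{omega_conj_goes_down}. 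You instead work with the $\forall x\,\exists y\,\theta$ normal form of a $\Pi_2$ sentence, capture only the pair $(\mbb{Q},\dot{x})$, and finish by upward $\Delta_0$-absoluteness of $\theta$; this is more economical, since no $V_\lambda$ needs to be transferred and no agreement-up-to-$\lambda$ claim needs to be verified. The one thing you should add is the bridge back to the paper's usage: the statement ``$T\models_\Omega\varphi$'' is itself the assertion that the $\Pi_2$ sentence ``every rank initial segment satisfying $T$ satisfies $\varphi$'' holds in all set-forcing extensions, so your version does cover the application, but that reduction deserves an explicit sentence.
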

\begin{proof}
Let $\varphi$ be a statement that is
 $\Omega$-satisfiable in $V$.
We will show that $\varphi$ is
 $\Omega$-satisfiable in $\tn{HOD}$.
Fix a poset $\mbb{P} \in V$
 and a $G$ that is $\mbb{P}$-generic over
 $V$ such that
 $$(V_\alpha)^{V[G]} \models \varphi$$
 for some fixed ordinal $\alpha$.

Let $\lambda > \max\{ \alpha,|\mbb{P}| \}$
 be a strong limit cardinal.
Let $H \in V$ be (set) generic over $\tn{HOD}$
 such that $V_\lambda \subseteq \tn{HOD}[H]$.
Since $\tn{HOD}[H]$ and $V$ agree up to $\lambda$,
 we get that
 $G$ is $\mbb{P}$-generic over $\tn{HOD}[H]$
 and the models $\tn{HOD}[H][G]$ and $V[G]$ agree up to $\lambda$.
Thus
 $$(V_\alpha)^{\tn{HOD}[H][G]} \models \varphi.$$
Thus, $\tn{HOD}$ satisfies that $\varphi$
 is $\Omega$-satisfiable.
\end{proof}

\begin{cor}
\label{omega_conj_goes_down}
Assume $V$
 has a proper class of Woodin cardinals.
If the $\Omega$ conjecture holds in $V$,
 then it holds in $\tn{HOD}$.
\end{cor}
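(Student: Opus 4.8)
The plan is to thread a single sentence $\varphi$ through the two absoluteness facts already established for $\Omega$-logic, using the $\Omega$ conjecture in $V$ as the bridge that converts $\Omega$-validity into $\Omega$-provability. Recall that the $\Omega$ conjecture is the implication that a proper class of Woodin cardinals entails, for every $\varphi$, the equivalence of $\zfc \models_\Omega \varphi$ and $\zfc \proves_\Omega \varphi$. Since soundness of $\Omega$-logic is a theorem of $\zfc$ and hence holds in $\tn{HOD}$, it suffices to establish in $\tn{HOD}$ the one nontrivial direction, namely that $\zfc \models_\Omega \varphi$ implies $\zfc \proves_\Omega \varphi$, for every sentence $\varphi$. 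Proving this universal implication directly sidesteps any case analysis: it yields the $\Omega$ conjecture in $\tn{HOD}$ whether or not $\tn{HOD}$ itself has a proper class of Woodin cardinals (vacuously in the latter case).

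First I would fix $\varphi$ and assume $\tn{HOD} \models (\zfc \models_\Omega \varphi)$. Unwinding the definition, $\zfc \models_\Omega \varphi$ is exactly the assertion that $\zfc \wedge \neg\varphi$ is \emph{not} $\Omega$-satisfiable at a rank-initial segment of a forcing extension; its failure is witnessed precisely by such a rank-initial segment. I would then invoke Proposition~\ref{omega_valid_go_up}: since $\Omega$-satisfiability is downwards absolute from $V$ to $\tn{HOD}$, its negation, $\Omega$-validity, is upwards absolute from $\tn{HOD}$ to $V$, applied here to the statement $\zfc \wedge \neg\varphi$. Hence $V \models (\zfc \models_\Omega \varphi)$.

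Next, using that $V$ has a proper class of Woodin cardinals together with the hypothesis that the $\Omega$ conjecture holds in $V$, I would conclude $V \models (\zfc \proves_\Omega \varphi)$. Finally, since $\zfc$ is definable and so is (coded by a real) in $\tn{HOD}$, and since $V$ has a proper class of Woodin cardinals, Proposition~\ref{omega_theorems_go_down} pushes this $\Omega$-theorem back down, giving $\tn{HOD} \models (\zfc \proves_\Omega \varphi)$. This closes the chain $(\zfc \models_\Omega \varphi)^{\tn{HOD}} \Rightarrow (\zfc \models_\Omega \varphi)^{V} \Rightarrow (\zfc \proves_\Omega \varphi)^{V} \Rightarrow (\zfc \proves_\Omega \varphi)^{\tn{HOD}}$, which is the desired implication.

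The main obstacle is not a deep new argument but careful bookkeeping of what ``$\models_\Omega$'' and ``$\proves_\Omega$'' mean when relativized to $\tn{HOD}$, and matching the quantifier form of $\zfc \models_\Omega \varphi$ to the rank-initial-segment notion of $\Omega$-satisfiability used in Proposition~\ref{omega_valid_go_up}. I would emphasize that the duality between $\Omega$-validity and $\Omega$-satisfiability is exact here because the failure of $\zfc \models_\Omega \varphi$ is witnessed by a rank-initial segment of a forcing extension satisfying $\zfc \wedge \neg\varphi$, and that Proposition~\ref{omega_theorems_go_down} requires the Woodin hypothesis only in $V$, which we have.
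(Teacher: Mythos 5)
Your proposal is correct and follows the same chain as the paper's own proof: $\Omega$-validity in $\tn{HOD}$ goes up to $V$ via Proposition~\ref{omega_valid_go_up}, the $\Omega$ conjecture in $V$ converts it to $\Omega$-provability, and Proposition~\ref{omega_theorems_go_down} brings that back down to $\tn{HOD}$. The only cosmetic difference is that you specialize to the theory $\zfc$ with no real parameter, while the paper runs the same argument for an arbitrary theory $T \in \tn{HOD}$ and real $x \in \mathbb{R} \cap \tn{HOD}$.
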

\begin{proof}
$$
\xymatrix{
 \mbox{$\Omega$-theorems of $V$} \ar[d] \ar@{--}[r] &
 \mbox{$\Omega$-validities of $V$} \\
 \mbox{$\Omega$-theorems of $\tn{HOD}$} &
 \mbox{$\Omega$-validities of $\tn{HOD}$ \ar[u]}
}$$

We will show something stronger.
Let $T \in \tn{HOD}$ be a theory
 and let $x \in \mbb{R} \cap \tn{HOD}$.
Let $\varphi$ be a formula and
 suppose that
 $$\tn{HOD} \models \bigg[
 T \models_\Omega \varphi(x)
 \bigg].$$
Then by Proposition~\ref{omega_valid_go_up}
 we have
 $$T \models_\Omega \varphi(x).$$
Combining this with the fact that $V$
 satisfies the $\Omega$ conjecture,
 we get that
 $$T \proves_\Omega \varphi(x).$$
Then by Proposition~\ref{omega_theorems_go_down}
 we get that
\[\tn{HOD} \models \bigg[
 T \proves_\Omega \varphi(x)
 \bigg]. \qedhere\]
\end{proof}

Unfortuntately, even if \textit{both}
 $V$ and $\tn{HOD}$ have a proper class of
 Woodin cardinals and $\tn{HOD}$
 satisfies the $\Omega$ conjecture,
 it is not clear that $V$ satisfies the
 $\Omega$ conjecture.

The second interesting application of
 Proposition~\ref{omega_theorems_go_down}
 is $V$ being ``$\varphi$-closed''
 for various large cardinal axioms
 $(\exists x) \varphi(x)$.
We will review some of the framework
 developed in Woodin \cite{woodin_ch2}:

\begin{definition}
Let $\varphi$ be a formula.
The statement $(\exists x)\, \varphi(x)$
is called a \textit{large cardinal axiom}
if $\varphi(x)$ is a
$\Sigma_2$-formula and, as a theorem of $\zfc$,
if $\kappa$ is a cardinal such that
$V \models \varphi(\kappa)$,
then $\kappa$ is uncountable,
inaccessible, and for every
complete Boolean algebra $\mbb{B}$
of cardinality $< \kappa$ we have
$V^\mbb{B} \models \varphi(\kappa)$.
\end{definition}

\begin{definition}
Suppose that
 $(\exists x) \varphi(x)$ is a large
 cardinal axiom.
Then $V$ is called $\varphi$-\textit{closed}
 iff for every set $X$,
 there exists a transitive set $M$
 and some $\kappa \in \tn{Ord} \cap M$
 such that
 $M \models \zfc$,
 $X \in (V_\kappa)^M$, and
 $M \models \varphi(\kappa)$.
\end{definition}

Here is a claim made in
 Woodin \cite{woodin_ch2},
 but we include a proof here
 for completeness:

\begin{proposition}
\label{woodin_ch2_paper_claim}
Assume there is a proper class of
 Woodin cardinals.
Then the following are equivalent:
\begin{itemize}
\item[1)] The $\Omega$ conjecture holds.
\item[2)] Suppose $(\exists x) \varphi(x)$
 is a large cardinal axiom such that
 $V$ is $\varphi$-closed.
Then
 $$\tn{ZFC} \proves_\Omega
 \mbox{``$V$ is $\varphi$-closed''.}$$
\end{itemize}
\end{proposition}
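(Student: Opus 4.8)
The plan is to prove the biconditional by organizing everything around one observation: for a large cardinal axiom $(\exists x)\,\varphi(x)$, the assertion ``$V$ is $\varphi$-closed'' is a $\Pi_2$ sentence, and $V$ is $\varphi$-closed if and only if $\zfc\models_\Omega$``$V$ is $\varphi$-closed''. Granting this equivalence, call it $(\star)$, the direction 1)$\Rightarrow$2) is essentially formal: if $V$ is $\varphi$-closed then $\zfc\models_\Omega$``$V$ is $\varphi$-closed'' by $(\star)$, and the $\Omega$ conjecture upgrades $\models_\Omega$ to $\proves_\Omega$, yielding $\zfc\proves_\Omega$``$V$ is $\varphi$-closed''. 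So the work in this direction is all in $(\star)$.

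For $(\star)$ the backward implication is immediate: applying $\models_\Omega$ to the trivial forcing, every $\zfc$-correct $V_\alpha$ (there is a proper class of these, e.g.\ at inaccessible $\alpha$) satisfies ``$V$ is $\varphi$-closed'', and since every set lies in such a $V_\alpha$, $V$ itself is $\varphi$-closed. The forward implication is the first delicate point. First I would show that every set-forcing extension $V[G]$ is $\varphi$-closed: given $Y=\dot Y_G$, apply $\varphi$-closedness of $V$ to $\tn{tc}(\{\dot Y,\mbb P\})$ to get transitive $N\models\zfc+\varphi(\kappa)$ with $\mbb P,\dot Y\in(V_\kappa)^N$; since $|\mbb P|<\kappa$ in $N$ and $G$ is generic over $N$, the forcing-invariance clause in the definition of a large cardinal axiom gives $N[G]\models\varphi(\kappa)$ with $Y\in(V_\kappa)^{N[G]}$. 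Then, to see that an arbitrary $\zfc$-correct $(V_\alpha)^{V[G]}$ is itself $\varphi$-closed, I would reflect the witness down: given $X\in(V_\alpha)^{V[G]}$ and a witnessing $M\models\zfc+\varphi(\kappa)$ in $V[G]$, take $M_0\prec M$ with $\tn{tc}(\{X\})\cup\{\kappa\}\subseteq M_0$ and $|M_0|=|\tn{tc}(X)|+\aleph_0$, and collapse. The collapse fixes $X$, produces $\bar M\models\zfc+\varphi(\bar\kappa)$ with $X\in(V_{\bar\kappa})^{\bar M}$, and has rank below $\alpha$ because $(V_\alpha)^{V[G]}\models\zfc$ makes $\alpha$ a limit cardinal; hence $\bar M\in(V_\alpha)^{V[G]}$, as needed.

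For 2)$\Rightarrow$1) I would argue the contrapositive, and this is where the main idea sits. Suppose the $\Omega$ conjecture fails, so there is a $\Pi_2$ sentence $\psi$ with $\zfc\models_\Omega\psi$ but $\zfc\not\proves_\Omega\psi$ (it suffices to treat $\Pi_2$ sentences). I would encode $\psi$ as $\varphi(\kappa):\Leftrightarrow$``$\kappa$ is inaccessible and $V_\kappa\models\psi$'', put in $\Sigma_2$ form as $\exists\lambda>\kappa\,[\,V_\lambda\models\zfc,\ V_\lambda\models$``$\kappa$ inaccessible'', and $V_\kappa\models\psi\,]$. The crucial point, which I expect to be the main obstacle, is checking that $(\exists\kappa)\,\varphi(\kappa)$ is a genuine large cardinal axiom: it plainly entails inaccessibility, and its preservation under forcing of size $<\kappa$ is exactly where $\Omega$-validity of $\psi$ is used, since $(V_\kappa)^{V[G]}$ is a $\zfc$-correct rank-initial segment of $V[G]$ and hence satisfies $\psi$. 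Moreover $V$ is $\varphi$-closed: for any $X$, picking an inaccessible $\kappa>\tn{rank}(X)$ and a $\zfc$-correct $V_\lambda$ above it, $\models_\Omega\psi$ gives $V_\kappa\models\psi$, so $V_\lambda$ is the required witness (here a proper class of Woodins supplies the proper class of inaccessibles).

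Finally I would close the loop. By 2), $\zfc\proves_\Omega$``$V$ is $\varphi$-closed'', whereas ``$V$ is $\varphi$-closed'' $\to\psi$ is an outright theorem of $\zfc$: unravelling $\psi=\forall a\,\exists b\,\theta$ with $\theta$ a $\Delta_0$ formula, $\varphi$-closedness applied to $a$ yields a transitive $M$ with $a\in(V_\kappa)^M$ and $(V_\kappa)^M\models\exists b\,\theta(a,b)$, and $\Delta_0$-absoluteness lifts this to $V$. Since $\proves_\Omega$ is closed under $\zfc$-provable implication — the universally Baire witness for ``$V$ is $\varphi$-closed'' works verbatim for $\psi$, because inside any strongly closed countable model $\models_\Omega$ respects $\zfc$-provable implications — we obtain $\zfc\proves_\Omega\psi$, contradicting the choice of $\psi$. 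This contradiction yields the $\Omega$ conjecture and completes 2)$\Rightarrow$1).
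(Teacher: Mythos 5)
Your overall architecture matches the paper's: for 1)$\Rightarrow$2) you show ``$V$ is $\varphi$-closed'' is $\Omega$-valid and let the $\Omega$ conjecture upgrade $\models_\Omega$ to $\proves_\Omega$, and for 2)$\Rightarrow$1) you encode an $\Omega$-valid $\Pi_2$ sentence $\psi$ into a large cardinal axiom and use $\zfc\proves$ (``$V$ is $\varphi$-closed'' $\to\psi$). Your extra work in the first direction --- checking that the $\zfc$-correct rank initial segments $(V_\alpha)^{V[G]}$ are themselves $\varphi$-closed via a L\"owenheim--Skolem collapse --- is a genuine refinement; the paper only verifies that each $V^{\mbb B}$ is $\varphi$-closed and leaves the passage to rank initial segments implicit, even though the definition of $T\models_\Omega\varphi$ it applies the $\Omega$ conjecture to is phrased in terms of rank initial segments.

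There is, however, a genuine gap in your 2)$\Rightarrow$1) direction: the formula $\varphi(\kappa):\Leftrightarrow$ ``$\kappa$ is inaccessible and $V_\kappa\models\psi$'' is not a large cardinal axiom in the sense required, so clause 2) cannot be applied to it. The definition demands that the invariance of $\varphi(\kappa)$ under forcing of size $<\kappa$ hold \emph{as a theorem of $\zfc$}, whereas your justification of invariance invokes the $\Omega$-validity of $\psi$ in $V$ --- a semantic fact about the actual universe, not something $\zfc$ proves (for a general $\Pi_2$ sentence $\psi$, $\zfc$ certainly does not prove that ``$V_\kappa\models\psi$'' survives small forcing). The provable invariance is not a cosmetic requirement here: it is exactly what lets transitive set models $M\models\zfc$, including the countable strongly closed models arising in $\proves_\Omega$, verify $\varphi(\kappa)$ in their own small extensions. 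The paper sidesteps this by defining $\varphi(\kappa)$ to assert that $\kappa$ is inaccessible \emph{and $\psi$ holds in every small generic extension of $V_\kappa$}; this version is $\zfc$-provably preserved by small forcing (a small extension of a small extension is a small extension), is still witnessed everywhere by the $\Omega$-validity of $\psi$ (so $V$ is $\varphi$-closed), and still provably implies $\psi$ by taking the trivial extension. Your argument goes through verbatim once you replace your $\varphi$ with this strengthened one; everything else, including the closing step that $\proves_\Omega$ respects $\zfc$-provable implication, is fine.
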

\begin{proof}
2) $\Rightarrow$ 1):
Now fix an $\Omega$-valid $\Pi_2$-statement $\psi$, and let us show $\psi$ is $\Omega$-provable. Let $\varphi(\kappa)$ be the large cardinal axiom stating that
$\kappa$ is inaccessible and $\psi$ holds in every small generic extension of $V_\kappa$. Then V is $\varphi$-closed since for any inaccessible $\kappa$, any model $M$ of ZFC containing $V_\kappa$ satisfies $\varphi(\kappa)$ by the $\Omega$-validity of $\psi$. By 2), $\varphi$ is $\Omega$-provable, and clearly $\text{ZFC}\vdash \varphi\to \psi$, so $\psi$ is $\Omega$-provable.

1) $\Rightarrow$ 2):
Let $\varphi$ be a large cardinal axiom such that V is $\varphi$-closed. We claim that the statement ``V is $\varphi$-closed'' is an $\Omega$-valid $\Pi_2$-sentence.
It follows from this and the
 $\Omega$ conjecture that
 ``V is $\varphi$-closed''
 is $\Omega$-provable.
To see this claim, we just need to show that for every complete Boolean algebra $\mathbb B, V^\mathbb B$ is $\varphi$-closed. For this, fix an inaccessible $\kappa > \text{rk}(\mathbb B)$ and let $M$ be a model of $\zfc$ containing $V_\kappa$ such that $M\vDash$ $\varphi(\kappa)$. Then $M^\mathbb B\vDash \varphi(\kappa)$ since $\varphi(\kappa)$ is invariant under small forcing by the definition of a large cardinal axiom. Now in $V^\mathbb B, M^\mathbb B$ is a model containing $V_\kappa^\mathbb B$ satisfying $\varphi(\kappa)$. It follows that $V^\mathbb B$ is $\varphi$-closed.
\end{proof}

Now we can get the promised
 second interesting consequence of
 Proposition~\ref{omega_theorems_go_down}:

\begin{cor}
Assume there is a proper class of
 Woodin cardinals.
Assume that the $\Omega$ conjecture holds.
Let $(\exists x)\, \varphi(x)$
 be a large cardinal axiom
 and assume that $V$ is
 $\varphi$-closed.
Then $\tn{HOD}$ is $\varphi$-closed.
\end{cor}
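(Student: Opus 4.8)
The plan is to run the $\Omega$-provable sentence ``$V$ is $\varphi$-closed'' down to $\tn{HOD}$ and then decode it there. Write $\chi$ for the $\Pi_2$ sentence asserting ``$V$ is $\varphi$-closed''. First I would apply Proposition~\ref{woodin_ch2_paper_claim} in $V$: since $V$ has a proper class of Woodin cardinals, satisfies the $\Omega$ conjecture, and is $\varphi$-closed, the direction from 1) to 2) yields $\zfc \proves_\Omega \chi$.

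Next I would push this $\Omega$-theorem into $\tn{HOD}$. Since $\zfc$ is a definable theory it lies in $\tn{HOD}$, and $\chi$ carries no real parameter, so Proposition~\ref{omega_theorems_go_down} applies directly (with theory $\zfc$ and no real parameter) and gives $\tn{HOD} \models [\zfc \proves_\Omega \chi]$. Working now inside $\tn{HOD}$, the soundness of $\Omega$-logic, being a theorem of $\zfc$ and hence available in $\tn{HOD}$, upgrades this to $\tn{HOD} \models [\zfc \models_\Omega \chi]$. Unwinding the definition of $\models_\Omega$ applied to the trivial forcing extension, this says precisely that $(V_\alpha)^{\tn{HOD}} \models \chi$ for every $\alpha$ with $(V_\alpha)^{\tn{HOD}} \models \zfc$.

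The final step is to convert this $\Omega$-validity back into genuine $\varphi$-closure of $\tn{HOD}$, and this is the step requiring the most care, since $\zfc$ proves outright that no $V_\alpha$ models $\zfc$. Here I would exploit the large cardinals of $V$: a proper class of Woodins gives a proper class of inaccessibles in $V$, and because $\tn{HOD}\subseteq V$ every inaccessible of $V$ remains regular and strong limit, hence inaccessible, in $\tn{HOD}$. Thus $\tn{HOD}$ has cofinally many $\alpha$ with $(V_\alpha)^{\tn{HOD}} \models \zfc$. Given any $X \in \tn{HOD}$, I would choose such an $\alpha$ with $X \in (V_\alpha)^{\tn{HOD}}$; then $(V_\alpha)^{\tn{HOD}} \models \chi$, so there is a transitive $M \in (V_\alpha)^{\tn{HOD}} \subseteq \tn{HOD}$ and $\kappa \in M$ with $M \models \zfc$, $X \in (V_\kappa)^M$, and $M \models \varphi(\kappa)$.

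Since transitivity, satisfaction of the fixed theory $\zfc$, the computation of $(V_\kappa)^M$, and satisfaction of the fixed formula $\varphi(\kappa)$ are all absolute between the transitive set $(V_\alpha)^{\tn{HOD}}$ and $\tn{HOD}$, the set $M$ genuinely witnesses $\varphi$-closure for $X$ in $\tn{HOD}$. As $X$ was arbitrary, $\tn{HOD}$ is $\varphi$-closed. I expect the main obstacle to be exactly this last conversion from $\Omega$-validity to truth, and the observation that unblocks it is the descent of inaccessibility from $V$ to $\tn{HOD}$, which guarantees enough rank initial segments of $\tn{HOD}$ modeling $\zfc$.
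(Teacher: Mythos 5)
Your proposal is correct and follows exactly the paper's route: Proposition~\ref{woodin_ch2_paper_claim} to get $\zfc \proves_\Omega$ ``$V$ is $\varphi$-closed'', Proposition~\ref{omega_theorems_go_down} to transfer this to $\tn{HOD}$, and soundness of $\Omega$-logic inside $\tn{HOD}$; your final paragraphs simply spell out the step the paper compresses into ``and so in particular,'' and the use of inaccessibles of $V$ descending to $\tn{HOD}$ to witness non-vacuity is a legitimate way to do that. One aside is false --- $\zfc$ does not prove that no $V_\alpha$ models $\zfc$; it merely fails to prove that one does --- but nothing in your argument depends on it.
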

\begin{proof}
By Proposition~\ref{woodin_ch2_paper_claim},
 $$\zfc \proves_\Omega
 \mbox{``$V$ is $\varphi$-closed''}.$$
Then By Proposition~\ref{omega_theorems_go_down}
 we have
 $$\tn{HOD} \models \bigg[
 \zfc \proves_\Omega
 \mbox{``$V$ is $\varphi$-closed''}
 \bigg].$$
By the soundness of $\Omega$-logic
 in $\tn{HOD}$ we have
 $$\tn{HOD} \models \bigg[
 \zfc \models_\Omega
 \mbox{``$V$ is $\varphi$-closed''}
 \bigg],$$
 and so in particular
  \[\tn{HOD} \models \bigg[
 \mbox{``$V$ is $\varphi$-closed''}
 \bigg].\qedhere\]
\end{proof}

Similar statements can be made,
 such as the following:

\begin{cor}
\label{omega_huge_go_down}
Assume there is a proper class of huge cardinals
 and the $\Omega$ conjecture holds.
Then $\tn{HOD}$ satisfies that every set belongs to
 an inner model with a huge cardinal.
\end{cor}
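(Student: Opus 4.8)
The plan is to read this as a direct application of the preceding (unlabeled) corollary to the large cardinal axiom asserting the existence of a huge cardinal. Write $\varphi(\kappa)$ for the usual formula ``$\kappa$ is huge'', i.e.\ there is a normal ultrafilter $U$ on $\mathcal{P}(\lambda)$ for some $\lambda > \kappa$ giving an elementary embedding $j$ with $\tn{crit}(j) = \kappa$, $j(\kappa) = \lambda$, and $M^{\lambda} \subseteq M$ for the associated transitive inner model $M$. The first step is to confirm that $(\exists x)\,\varphi(x)$ is a large cardinal axiom in the technical sense defined above: hugeness is expressed by a $\Sigma_2$ formula, and it is a standard fact that hugeness of $\kappa$ is preserved by forcing of size $< \kappa$. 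Both points are routine.

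Next I would verify that $V$ is $\varphi$-closed. Given a set $X$, use the proper class of huge cardinals to fix a huge $\kappa > \tn{rk}(X)$, let $U$ be a witness to its hugeness, and choose an inaccessible $\lambda_0$ above $\tn{rk}(U)$. Then $M := (V_{\lambda_0})$ is a transitive model of $\zfc$ with $\kappa \in M$, $X \in (V_\kappa)^M = V_\kappa$, and $M \models \varphi(\kappa)$, since $U \in M$ and the $\Sigma_2$ assertion $\varphi(\kappa)$ reflects down to $V_{\lambda_0}$ once its set-sized witness is present. Hence $V$ is $\varphi$-closed.

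Finally, I would note that a proper class of huge cardinals yields a proper class of Woodin cardinals (for instance, each huge cardinal is superstrong, hence a limit of Woodin cardinals, so the Woodin cardinals are unbounded in the ordinals). Thus all hypotheses of the preceding corollary hold: a proper class of Woodin cardinals, the $\Omega$ conjecture, the large cardinal axiom $(\exists x)\,\varphi(x)$, and $\varphi$-closure of $V$. That corollary then gives that $\tn{HOD}$ is $\varphi$-closed, which is exactly the conclusion, in the sense of this framework, that $\tn{HOD}$ satisfies that every set belongs to an inner model with a huge cardinal.

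The only genuinely content-bearing step, and the one I would be most careful with, is the reflection of $\varphi(\kappa)$ into the transitive set model $V_{\lambda_0}$: one must repackage hugeness via the set-sized witness $U$ so that its truth is certified inside $V_{\lambda_0}$. Everything else — the downward transfer of $\Omega$-provability and the equivalence with $\varphi$-closure — is already delivered by Proposition~\ref{omega_theorems_go_down}, Proposition~\ref{woodin_ch2_paper_claim}, and the preceding corollary, so no real obstacle remains.
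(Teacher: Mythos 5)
Your reduction to the preceding corollary is mostly sound: hugeness is indeed a large cardinal axiom in the technical sense ($\Sigma_2$, Levy--Solovay), your verification that $V$ is $\varphi$-closed is fine, and a proper class of huge cardinals does yield a proper class of Woodin cardinals (each huge cardinal is superstrong, hence a limit of Woodins), so the $\Omega$ conjecture machinery applies and you correctly obtain that $\tn{HOD}$ is $\varphi$-closed. The gap is in your last sentence, where you identify ``$\tn{HOD}$ is $\varphi$-closed'' with the stated conclusion. In this framework $\varphi$-closure only produces, for each set $X$, a transitive \emph{set} model $M$ with $X \in (V_\kappa)^M$ and $M \models \varphi(\kappa)$; the corollary asserts that every set belongs to an \emph{inner model} (a proper class) with a huge cardinal, and there is no general way to stretch a transitive set model of ``there is a huge cardinal'' to a proper class inner model. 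Your hedge ``in the sense of this framework'' does not close this distance.

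The paper's own proof is aware of exactly this issue and routes around it differently: rather than invoking the $\varphi$-closure corollary, it transfers the strengthened $\Pi_2$ statement ``for every set $X$ there is a transitive model $M$ with a huge cardinal above the rank of $X$ \emph{and an iterable $M$-ultrafilter with critical point above the huge cardinal}.'' This statement is generically absolute (iterability is checked in a rank initial segment of $V$), hence $\Omega$-valid, hence $\Omega$-provable by the $\Omega$ conjecture, and Proposition~\ref{omega_theorems_go_down} plus soundness pushes it into $\tn{HOD}$. The iterable ultrafilter is the essential extra ingredient: iterating it through all the ordinals stretches the set model $M$ to a proper class inner model, and since the critical point lies above the huge cardinal, the witness to hugeness survives the iteration. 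To repair your argument you would need to add this step (or some equivalent device) after concluding $\varphi$-closure of $\tn{HOD}$; as written, you have proved only the set-model version of the conclusion.
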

\begin{proof}
For every set $X$, there is a transitive model $M$ with a huge cardinal above the rank of $X$ and an iterable $M$-ultrafilter with critical point above the huge cardinal.
This is $\Omega$-provable. The reason is that this is a generically absolute $\Pi_2$ statement (we only need to check iterability of the $M$-ultrafilter in some rank initial segment of V).
Then $\tn{HOD}$ satisfies that
 this statement is $\Omega$-provable
 and the result easily follows.
\end{proof}

\section{$\Omega$ proofs that reals are in $\tn{HOD}$}

In this section we will investigate
 some technology surrounding
 $$(\forall^\baire)(\Sigma^2_1)^{\tn{uB}}$$
 absolutenes and how it relates to \tn{HOD}.
Let us make the following definition
 to ease the discussion:
\begin{definition}
A real number $x$ is called an
 $\Omega$-real iff
 $$\zfc + \mbox{``there exists a Woodin cardinal''} \proves_\Omega (x \in \tn{HOD}).
 $$
\end{definition}

If there exists a Woodin cardinal,
 then every $\Omega$-real is in \tn{HOD}.
This is by the soundness of $\Omega$-logic
 and the fact that every
 real that is in $\tn{HOD}^{V_\alpha}$
 for some ordinal $\alpha$
 is also in $\tn{HOD}$.
It is also true that if a real
 is in a countable premouse that has a
 universally Baire $\omega_1$-iteration
 strategy, then that real is an $\Omega$-real
 (see Proposition~\ref{reals_in_mice_are_omega_reals}).

On the other hand,
 in Theorem~\ref{ult_l_implies_exery_real_is_omega}
 we will prove that the
 axiom ``$V$ = Ultimate $L$''
 implies that every real is an
 $\Omega$-real.
This is due to Woodin,
 but we provide a proof here for
 completeness.
The way the proof works is to first
 show that the axiom ``$V$= Ultimate $L$''
 implies for each real $x$,
 there is a universally Baire set of reals $A$
 such that $x$ is OD in $L(A,\mbb{R})$
 (see Lemma~\ref{ultl_reals_in_hod_lar}).
Then we show that the conclusion
 of the last sentence
 for any fixed real $x$
 implies that $x$ is an $\Omega$-real
 (see Lemma~\ref{od_in_lar_implies_omega_real}).

Proposition~\ref{phi2_implies_super_abs_fail}
 says that if there is a Woodin cardinal
 and every real is an $\Omega$-real, then
 generic $(\forall^\baire)(\Sigma^2_1)^{\tn{uB}}$
 absoluteness fails.
This then gives us 
 Corollary~\ref{lc_con_with_ultl_cannot_imply_super_abs} from the introduction.
That is, large cardinals consistent
 with the axiom ``$V$ = Ultimate $L$'' cannot imply
 $(\forall^\baire)(\Sigma^2_1)^{\tn{uB}}$ absoluteness
 between $\tn{HOD}$ and $V$.

We will need the following fact about determinacy theory
 for the lemma that follows:
\begin{lemma}
\label{od_scs_is_less_than_theta_ub_in_hod}
Assume $\ad + \tn{DC}_\mbb{R}$.
Suppose $B \subseteq \baire$ is $\Delta^2_1$.
Then $B \cap \tn{HOD}$ is ${<}\Theta$-\tn{uB} in $\tn{HOD}$.
In fact, $B \cap \tn{HOD}$
 is ${<}\Theta$-homogeneously Suslin in $\tn{HOD}$.
\end{lemma}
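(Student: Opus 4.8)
The plan is to prove the stronger ``in fact'' clause, since a $<\Theta$-homogeneously Suslin set is automatically $<\Theta$-\tn{uB}: the homogeneous tree for the set and a weakly homogeneous tree for its complement project to complements in every generic extension by a poset of size $<\Theta$. First I would observe that $B$, being $\Delta^2_1$, is ordinal definable, and that under $\ad + \dc_\mbb{R}$ it is both Suslin and co-Suslin. By the scale property for $\Sigma^2_1$ (which holds under $\ad + \dc_\mbb{R}$), the \tn{OD} sets $B$ and $\baire \m B$ admit \tn{OD} scales; let $T$ and $S$ be the associated \tn{OD} trees, so that $p[T] = B$ and $p[S] = \baire \m B$ in $V$. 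Since $T$ and $S$ are ordinal definable they lie in $\tn{HOD}$, and since illfoundedness of $T_x$ and $S_x$ is absolute between $\tn{HOD}$ and $V$, we have $p[T]^{\tn{HOD}} = B \cap \tn{HOD}$ and $p[S]^{\tn{HOD}} = (\baire \m B) \cap \tn{HOD}$, and these project to complements over $\tn{HOD}$.

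Next I would invoke the classical determinacy-theoretic construction of homogeneity measures for a Suslin--co-Suslin set (Martin, Kechris, Steel). From the \tn{OD} scales on $B$ and its complement one derives, for each $\gamma < \Theta$, a coherent, countably complete tower of $\gamma$-complete measures $\langle \mu_s : s \in \bairenodes \rangle$ concentrating on $T$ and witnessing that $T$ is a $\gamma$-homogeneous tree for $B$. The key point for us is that all of this data---the tree, the canonical wellorder of the relevant $\lambda^{<\omega}$, and hence each measure $\mu_s$---can be chosen ordinal definable, so that the entire homogeneity system belongs to $\tn{HOD}$.

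It then remains to transfer the system to $\tn{HOD}$. Each $\mu_s \in \tn{HOD}$ that is $\gamma$-complete (respectively countably complete) in $V$ remains so in $\tn{HOD}$, since $\tn{HOD}$ has fewer sequences of measure-one sets to intersect; and wellfoundedness of a tower is absolute between $\tn{HOD}$ and $V$, so for $x \in \tn{HOD}$ we have $x \in B \cap \tn{HOD}$ iff the tower $\langle \mu_{x \restriction n} : n < \omega \rangle$ is wellfounded, computed either in $\tn{HOD}$ or in $V$. Hence $\tn{HOD}$ sees, for every $\gamma < \Theta$, that $T$ together with the measures $\mu_s$ is a $\gamma$-homogeneity system for $B \cap \tn{HOD}$; that is, $B \cap \tn{HOD}$ is $<\Theta$-homogeneously Suslin in $\tn{HOD}$, and therefore $<\Theta$-\tn{uB} in $\tn{HOD}$.

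The main obstacle is the construction in the second paragraph: arranging the homogeneity measures to be \emph{ordinal definable} (so that they descend to $\tn{HOD}$) and, more seriously, pushing their completeness up to \emph{every} $\gamma < \Theta$ rather than merely to the Suslin cardinal associated with $B$. Here I would use that under $\ad + \dc_\mbb{R}$ the requisite complete measures exist cofinally in $\Theta$, together with the Suslin-cardinal analysis of the Suslin--co-Suslin sets, to marshal coherent \tn{OD} homogeneity systems of unbounded completeness below $\Theta$. Once this is in place, the descent to $\tn{HOD}$ is routine absoluteness, as indicated above.
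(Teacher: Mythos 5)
Your overall skeleton matches the paper's: obtain an ordinal definable homogeneity system for $B$ from the $\Sigma^2_1$ scales on $B$ and its complement, boost its completeness cofinally below $\Theta$, and restrict everything to $\tn{HOD}$. But the two steps you yourself flag as ``the main obstacle'' and then defer are exactly the content of the lemma, and as written they constitute a genuine gap. First, the assertion that the homogeneity measures ``can be chosen ordinal definable'' has no argument behind it: under $\ad$ there is no choice function available, and Martin's theorem that the tree of a Moschovakis scale is homogeneous does not by itself hand you a \emph{definable} system of measures. The paper's fix is to note that the (real codes for) homogeneity systems for $B$ form a nonempty collection of sets of reals that is $\Sigma^2_1$ in the parameter $B$, and then to apply the $\Sigma^2_1$ Basis Theorem (Fact~\ref{sigma_2_1_reflection}) with the $\Sigma^2_1$ scales $\vec\varphi,\vec\psi$ on $B$ and its complement to extract a $\Delta^2_1(\vec\varphi,\vec\psi)$, hence $\tn{OD}$, code $E$. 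Even then one must verify, via the Moschovakis coding lemma applied inside the collapsed model $L_\alpha(D,\mbb R)$ that $E$ codes, that the coded measures measure all the relevant sets of $V$, i.e.\ that they really are measures.

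Second, ``the requisite complete measures exist cofinally in $\Theta$'' does not by itself upgrade a countably complete homogeneity system to a $\gamma$-complete one for arbitrary $\gamma<\Theta$, much less an $\tn{OD}$ one. The paper's mechanism is specific: fix a weak partition cardinal $\delta<\Theta$ above $|T|$, pass to the Martin--Solovay tree of $(T,\vec\mu,\delta)$, and invoke the proof of Lemma 3.6 of \cite{jackson_intro} to obtain an $\tn{OD}$ $\delta$-complete homogeneity system on that tree; then use that $\Theta$ is a limit of such cardinals \cite{kechris_ad}. Note that this step naturally yields homogeneity for the \emph{complement} first, which is harmless since $B$ is $\Delta^2_1$ and the situation is symmetric. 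Your third paragraph (restricting $\tn{OD}$ measures to $\tn{HOD}$ and using absoluteness of wellfoundedness of towers) is in line with what the paper does, but without the two missing constructions the proof does not go through.
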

\begin{proof}
By a theorem of Martin
 (see \cite{tree_of_m_scale_is_homog}),
 $B$ is homogeneously Suslin.
We can show that in fact $B$ has an OD
 homogeneous tree $(T,\vec{\mu})$.
To see this, since $\Sigma^2_1$ has the scale property,
 fix scales $\vec{\varphi}$ and $\vec{\psi}$ for
 $B$ and its complement that are $\Sigma^2_1$
 (and hence $\tn{OD}$).
Then by the $\Sigma^2_1$ Basis Theorem, there is a
 $\Delta^2_1(\vec{\varphi},\vec{\psi})$ set $E$
 (which is hence $\tn{OD}$)
 that codes the
 homogeneity system for $B$.
More precisely, $E$ codes via transitive collapse a
 model $L_\alpha(D,\mbb{R})$ satisfying $\zf^-$
 such that there is a homogeneity system for $B$ that is
 definable from $D$ over $L_\alpha(D,\mbb{R})$;
 note that for all $\gamma < \alpha$, we have
 $P(\gamma) \in L_\alpha(D,\mbb{R})$ by the
 Moschovakis coding lemma, so the measures in the
 homogeneity system really are measures.
The homogeneous tree is $\tn{OD}_E$ and hence
 $\tn{OD}_B$ and hence OD.

Now let $\delta < \Theta$ be a weak partition cardinal
 above $|T|$, and let $S$ be the Martin-Solovay tree
 associated to $(T,\vec{\mu},\delta)$.
By the proof of Lemma 3.6 of \cite{jackson_intro},
 there is an OD $\delta$-complete homogeneity system
 $\vec{\nu}$ of $S$.
The restriction of $(S,\vec{\nu})$ to $\tn{HOD}$
 witnessed that the complement of $B \cap \tn{HOD}$ is
 $\delta$-homogeneously Suslin in $\tn{HOD}$, and hence
 $B \cap \tn{HOD}$ is ${<}\delta$-uB
 in $\tn{HOD}$.
Since $\Theta$ is a limit of partition cardinals by
 \cite{kechris_ad},
 $B \cap \tn{HOD}$ is ${<}\Theta$-uB in $\tn{HOD}$.
\end{proof}

\begin{lemma}
Assume the axiom ``V = \tn{Ulimate} L''.
Let $\alpha$ be an ordinal such that $V_\alpha \models \zfc$.
If $x \in \mathbb{R}$,
 then there exists a $\beta < \alpha$
 and an $A \subseteq \mathbb{R}$
 that is universally Baire in $V_\alpha$
 such that
 $$V_\alpha \models x \mbox{ is } \tn{OD}
 \mbox{ in } L_\beta(A,\mathbb{R}).$$
\end{lemma}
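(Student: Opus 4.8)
The plan is to prove the global statement in \(V\) first — for each real \(x\) there is a \(\tn{uB}\) set \(A\) with \(x\in\tn{OD}^{L(A,\mathbb{R})}\) — and then to descend the witness into \(V_\alpha\). Since the axiom ``\(V=\tn{Ultimate}\ L\)'' implies \(V=\tn{HOD}\), every real \(x\) is already \(\tn{OD}\) in \(V\); the real work is to reflect this ordinal definability into an \(L(A,\mathbb{R})\) for a \emph{universally Baire} parameter \(A\), and then to see that all the relevant data can be found below \(\alpha\).

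For the global step I would exploit the \(\Sigma_2\)-definability property packaged into the axiom: every \(\Sigma_2\) sentence true in \(V\) already holds in \(\tn{HOD}^{L(A,\mathbb{R})}\) for some \(\tn{uB}\) set \(A\). Fix \(x\), and let \(<_V\) be the canonical \(\Sigma_2\)-good wellordering of \(V\) supplied by Ultimate \(L\), so that \(x\) occupies some position \(\xi\) in \(<_V\). Finite approximations to this datum — assertions of the form ``the \(<_V\)-least real with such-and-such already-fixed finite behaviour has next bit [the concrete value \(x(n)\)]'' — are parameter-free \(\Sigma_2\) sentences true in \(V\), and I would reflect these into the models \(\tn{HOD}^{L(A,\mathbb{R})}\), using that the \(L(A,\mathbb{R})\) form a directed system under \(\subseteq\) to amalgamate the capturing information into a single \(\tn{uB}\) set \(A\) in whose \(L(A,\mathbb{R})\) the real \(x\) is \(\tn{OD}\). \textbf{This is the main obstacle}: passing from the \(\Sigma_2\) \emph{schema} to the capture of one prescribed real, which amounts to arranging simultaneously that \(A\) is \(\tn{OD}\) in \(L(A,\mathbb{R})\) (so that no real parameter survives) and that \(x\) is definable there from \(A\) and ordinals. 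This elimination of the ordinal naming \(x\)'s position is precisely the content I would attribute to Woodin, and where the structure theory of Ultimate \(L\) (the canonical wellorder, generic absoluteness, and the minimality of the capturing \(\tn{uB}\) set) is essential.

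It remains to descend \(A\) and the witnessing level below \(\alpha\). Since \(V_\alpha\models\zfc\), the ordinal \(\alpha\) is a \(\beth\)-fixed point, hence a strong limit; as \(A\subseteq\mathbb{R}\subseteq V_{\omega+1}\) we have \(A\in V_\alpha\), and because \(A\) is \(\tn{uB}\) in \(V\) its \({\le}\gamma\)-\(\tn{uB}\) witnessing trees may be taken on \(\omega\times\eta\) with \(\eta<\alpha\) for each \(\gamma<\alpha\); these trees lie in \(V_\alpha\) and remain complementary in every \(V_\alpha\)-generic extension, so \(A\) is \(\tn{uB}\) in \(V_\alpha\). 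Moreover \(\mathbb{R}^{V_\alpha}=\mathbb{R}\), so each level \(L_\beta(A,\mathbb{R})\) with \(\beta<\alpha\) is computed identically in \(V_\alpha\) and in \(V\), whence \(V_\alpha\) agrees with \(V\) on whether \(x\) is \(\tn{OD}\) over such a level. The only remaining point is that the level \(\beta\) witnessing the ordinal definition can be taken below \(\alpha\); here I would use that the capturing data in Ultimate \(L\) is intrinsically low — the relevant \(x\) lives in a countable premouse defined from the \(\tn{uB}\) strategy \(A\), so the defining formula and parameter are decided at a level well below the least \(\alpha_0\) with \(V_{\alpha_0}\models\zfc\), and hence below every \(\alpha\) under consideration (after replacing \(A\) by a \(<_{\text{Wadge}}\)-minimal capturing set if necessary). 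Choosing \(\beta<\alpha\) to be this level gives \(V_\alpha\models\) ``\(x\) is \(\tn{OD}\) in \(L_\beta(A,\mathbb{R})\)'', as required.
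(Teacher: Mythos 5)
There is a genuine gap, and it sits exactly where you flag ``the main obstacle'': the passage from the parameter-free \(\Sigma_2\) reflection schema of ``\(V=\tn{Ultimate}\ L\)'' to the capture of one \emph{prescribed} real \(x\). Your proposed mechanism --- a canonical \(\Sigma_2\)-good wellordering of \(V\), bit-by-bit parameter-free approximations to \(x\)'s position, and amalgamation across the directed system of \(L(A,\mbb{R})\)'s --- is not carried out, and it is not clear it can be: the axiom as formulated only asserts that each true \(\Sigma_2\) \emph{sentence} holds in \(\tn{HOD}^{L(A,\mbb{R})}\) for \emph{some} uB \(A\), the position of \(x\) in any wellordering of \(V\) is an ordinal parameter external to \(L(A,\mbb{R})\), and each of the \(\omega\)-many approximations may be witnessed by a different \(A\) with no supplied uniformity. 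Saying that this elimination ``is precisely the content I would attribute to Woodin'' concedes the point: this step \emph{is} the lemma, so it cannot be outsourced.

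The paper's proof sidesteps the parameter problem entirely by arguing by contradiction: if the lemma fails in some \(V_\alpha\), then the sentence \(\psi\equiv(\exists\gamma)\,V_\gamma\models\varphi\), where \(\varphi\) asserts \(\zfc\), sharps, and ``there exists a real not \(\tn{OD}\) in any \(L_\beta(B,\mbb{R})\) with \(B\) uB'', is a \emph{parameter-free} true \(\Sigma_2\) sentence. Reflecting \(\psi\) into \(\mc{H}:=\tn{HOD}^{L(A,\mbb{R})}\) produces a ``counterexample'' real \(x'\) lying in \(\mc{H}\), which is therefore \(\tn{OD}\) in \(L(A,\mbb{R})\) for free --- no wellordering or bitwise encoding is needed. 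The remaining (and substantial) work, which your sketch has no analogue of, is to convert ``\(x'\) is \(\tn{OD}\) in \(L(A,\mbb{R})\)'' into ``\(x'\) is \(\tn{OD}\) in \(L_\beta(B,\mbb{R})\) for some \(B\) that \((V_{\alpha'})^{\mc{H}}\) itself sees as universally Baire with \(\beta<\alpha'\)''; the paper does this via condensation below \(\Theta^{L(A,\mbb{R})}\), the \(\Sigma^2_1\) basis theorem to get a \(\Delta^2_1(x')\) code \(E\), and Lemma~\ref{od_scs_is_less_than_theta_ub_in_hod} to see that \(E\cap\mc{H}\) is \({<}\Theta\)-uB in \(\mc{H}\), yielding the contradiction. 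Note also that your order of deduction is reversed relative to the paper: the global statement (Lemma~\ref{ultl_reals_in_hod_lar}) is there \emph{derived from} this local lemma, not the other way around, so deferring to the global version does not help.
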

\begin{proof}
Note that because we have a proper class
 of Woodin cardinals,
 for any ordinal $\gamma$ such that
 $V_\gamma \models \zfc$,
 we have
 $V_\gamma \models (\forall y)\, y^\#$ exists.

Assume, towards a contradiction,
 that the theorem is false.
Let $\varphi$ be the conjunction of the following:
\begin{itemize}
\item $\zfc$
\item $(\forall y)\, y^\#$ exists
\item $
 (\exists x \in \mbb{R})
 (\forall \beta \in \mbox{Ord})
 (\forall B \in \Gamma^\infty)\,
 x \mbox{ is \textit{not} } \tn{OD} \mbox{ in }
 L_\beta(B,\mathbb{R})$.
\end{itemize}
So $V_\alpha \models \varphi$.
Since
 $$\psi :\Leftrightarrow (\exists \gamma \in \mbox{Ord})\,
 V_\gamma \models \varphi$$
 is a (true) $\Sigma_2$ statement,
 by applying the reflection part
 of the axiom ``$V$ = Ultimate $L$'',
 fix a universally Baire $A \subseteq \mathbb{R}$
 such that 
 $$\mc{H} := \tn{HOD}^{L(A,\mathbb{R})}$$
 satisfies $\psi$.
Fix some $\alpha' \in \mbox{Ord}$
 such that
 $$M := (V_{\alpha'})^\mc{H}$$
 satisfies $\varphi$.
Fix some $x \in \mbb{R}^\mc{H}$ that witnesses
 that $\varphi$ is true in $M$.

Note that
\begin{equation}\label{line_a_od_in_lar}
 \mbox{$x$ is $\tn{OD}$ in $L(A,\mathbb{R})$}
\end{equation}
 (that is, $x \in \mc{H}$).
So therefore we may use condensation
 to fix the smallest ordinal
 $\gamma < \Theta^{L(A,\mathbb{R})}$
 such that
 $x$ is $\tn{OD}$ in $L_\gamma(A,\mathbb{R})$.
The structure $L_\gamma(A,\mathbb{R})$
 can be coded in $L(A,\mathbb{R})$
 by a set of reals.
Using the $\Sigma^2_1$ basis theorem,
 there is a $\Delta^2_1(x)$
 set of reals $E \subseteq \mathbb{R}$
 that codes a similar structure.
Using Lemma~\ref{od_scs_is_less_than_theta_ub_in_hod},
 $$\bar{E} := E \cap \mc{H}$$
 is ${<}\Theta^{L(A,\mathbb{R})}$-universally
 Baire in $\mc{H}$
 and
\begin{equation}\label{line_elem_sub_1}
 \langle H(\omega_1), \bar{E}, \in
 \rangle^\mc{H}
 \preceq
 \langle H(\omega_1), E, \in \rangle.
\end{equation}
Let's say that $\bar{E}$
 codes the following structure in
 $\mc{H}$:
 $$L_\beta(B,\mathbb{R})$$
 for some fixed ordinal $\beta$ and some
 fixed set of reals
 $B$ in $\mc{H}$.
Thus by
 (\ref{line_a_od_in_lar}) and
 (\ref{line_elem_sub_1}),
 $$x \mbox{ is } \tn{OD} \mbox{ in }
 L_\beta(B,\mathbb{R}).$$

Because $L_\beta(B,\mathbb{R})$
 is coded by a set of reals in
 $\mc{H}$,
 it follows that
 $$\beta < \Theta^\mc{H}.$$
Note that $\Theta^\mc{H} \not= \Theta^{L(A,\mathbb{R})}$.
Since $(V_{\alpha'})^\mc{H}$ is a model
 of $\zfc$, we have
 $$\Theta^\mc{H} < \alpha'.$$
Thus
 $$\beta < \alpha'.$$

Since $\bar{E}$ is
 ${<}\Theta^{L(A,\mathbb{R})}$-universally
 Baire in $\mc{H}$,
 so is $B$.
Next, since $(V_{\alpha'})^\mc{H} \models \varphi$,
 in particular
 $(V_{\alpha'})^\mc{H} \models
 (\forall y)\, y^\#$ exists,
 it must be that
 $$\alpha' \le \Theta^{L(A,\mathbb{R})}.$$
Combining this with the fact that
 $B$ is
 ${<}\Theta^{L(A,\mathbb{R})}$-universally
 Baire in $\mc{H}$,
 we have that $B$ is universally Baire
 in $(V_{\alpha'})^\mc{H}$.

Putting this all together,
 we have that
 $(V_{\alpha'})^\mc{H} \models \varphi$
 but also
 $$(V_{\alpha'})^\mc{H} \models
 (\exists \beta' \in \mbox{Ord})
 (\exists B' \in \tn{uB})\,
 x \mbox{ is } \tn{OD} \mbox{ in }
 L_{\beta'}(B',\mathbb{R}).$$
This is a contradiction.
\end{proof}

\begin{lemma}
\label{ultl_reals_in_hod_lar}
Assume the axiom ``$V$ = \tn{Ultimate} $L$''.
Let $x \in \mathbb{R}$.
Then there exists a universally Baire
 set of reals $A$ such that
 $x$ is $\tn{OD}$ in $L(A,\mathbb{R})$.
\end{lemma}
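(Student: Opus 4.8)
The plan is to globalize the previous (local) lemma, using that the axiom ``$V = \tn{Ultimate } L$'' entails a proper class of Woodin cardinals. First I would fix Woodin cardinals $\delta < \alpha$, which exist since the Woodins form a proper class. Since every Woodin cardinal is inaccessible, $\alpha$ is inaccessible and hence $V_\alpha \models \zfc$. Applying the previous lemma inside $V_\alpha$ then produces an ordinal $\beta < \alpha$ and a set $A \subseteq \mathbb{R}$ that is universally Baire \emph{in} $V_\alpha$ with $V_\alpha \models$ ``$x$ is $\tn{OD}$ in $L_\beta(A,\mathbb{R})$''. It remains to promote each of the two local features---being $\tn{OD}$ in an initial segment, and being $\tn{uB}$ in $V_\alpha$---to their global counterparts in $V$.

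The passage from $L_\beta(A,\mathbb{R})$ to $L(A,\mathbb{R})$ is routine. Because $\mathbb{R}^{V_\alpha} = \mathbb{R}$, $A \in V_\alpha$, and $\beta < \alpha$, the structure $L_\beta(A,\mathbb{R})$ and its satisfaction relation are absolute between $V_\alpha$ and $V$, so ``$x$ is $\tn{OD}$ in $L_\beta(A,\mathbb{R})$'' holds in $V$. Moreover $L_\beta(A,\mathbb{R})$ is definable inside $L(A,\mathbb{R})$ from the single ordinal parameter $\beta$ (the predicate $A$ and the constant $\mathbb{R}$ being part of the ambient structure), so any definition of $x$ from ordinals over $L_\beta(A,\mathbb{R})$ converts into a definition of $x$ from ordinals over $L(A,\mathbb{R})$. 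Hence $x$ is $\tn{OD}$ in $L(A,\mathbb{R})$ as computed in $V$.

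The substantive point is upgrading ``$\tn{uB}$ in $V_\alpha$'' to ``$\tn{uB}$ in $V$'', and this is the step I expect to be the main obstacle. First I would verify that $A$ is ${<}\alpha$-$\tn{uB}$ in $V$: for each $\gamma < \alpha$ the trees $T,S \in V_\alpha$ witnessing $\tn{uB}$-ness in $V_\alpha$ via $\tn{Col}(\omega,\gamma)$ continue to work in $V$, since $\alpha$ is inaccessible, so $\mc{P}(\tn{Col}(\omega,\gamma))$ agrees in $V_\alpha$ and $V$; the two models thus compute the same generic reals and the same length-$\omega$ branches through $T$ and $S$ (all relevant ordinals lie below $\alpha$), whence $T$ and $S$ project to complements in $V^{\tn{Col}(\omega,\gamma)}$ exactly when they do in $V_\alpha^{\tn{Col}(\omega,\gamma)}$. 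Finally, since there is a Woodin cardinal $\delta < \alpha$ and a proper class of Woodins, I would invoke the Martin--Steel--Woodin theory to bootstrap ${<}\alpha$-$\tn{uB}$-ness upward: using a Woodin below $\alpha$, the set $A$ is homogeneously Suslin at a level below $\alpha$, and a homogeneous Suslin representation can be stretched using the Woodin cardinals cofinal in the ordinals, yielding that $A$ is ${<}\lambda$-$\tn{uB}$ for every $\lambda$, i.e. universally Baire in $V$. Combining this with the second paragraph produces a universally Baire $A$ with $x$ ordinal definable in $L(A,\mathbb{R})$, as desired. The delicate part is precisely this last bootstrapping, which relies on the standard (but nontrivial) propagation of homogeneous Suslin representations through Woodin cardinals rather than on anything specific to ``$V = \tn{Ultimate } L$'' beyond the proper class of Woodins it supplies.
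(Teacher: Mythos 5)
Your skeleton (apply the preceding local lemma at a suitable $\alpha$, then upgrade both ``$\tn{uB}$ in $V_\alpha$'' and ``$\tn{OD}$ in $L_\beta(A,\mathbb{R})$'' to their global versions) matches the paper, but both upgrade steps have genuine gaps. The more serious one is your treatment of ordinal definability. The intended (and needed) meaning of ``$x$ is $\tn{OD}$ in $L(A,\mathbb{R})$'' is definability from ordinal parameters over the pure $\in$-structure, \emph{without} $A$ available as a predicate or parameter; this is forced by the downstream use in Lemma~\ref{od_in_lar_implies_omega_real}, where one must conclude that $x$ lands in $\tn{HOD}$ of a model in which $A$ itself is not known to be $\tn{OD}$. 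Your step ``the predicate $A$ \ldots being part of the ambient structure'' is exactly the move that is not allowed, and without it the conversion from $L_\beta(A,\mathbb{R})$ to $L(A,\mathbb{R})$ does not go through. The paper's fix is that, since there is a proper class of Woodin cardinals and $A$ is $\tn{uB}$, one has $L(A,\mathbb{R})\models\tn{AD}$, and hence $L_\beta(A,\mathbb{R})$ is definable in $L(A,\mathbb{R})$ from the two ordinals $\beta$ and the Wadge rank of $A$ (any $B$ of the same Wadge rank generates the same model). That is the real content of this lemma, and it is missing from your proposal.

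The second gap is the bootstrapping of ``$\tn{uB}$ in $V_\alpha$'' to ``$\tn{uB}$ in $V$''. The standard propagation theorems (Martin--Steel, Martin--Solovay, Woodin) convert tree representations \emph{downward} from a Woodin cardinal: e.g.\ a $\delta^+$-weakly homogeneous tree yields ${<}\delta$-homogeneity, and a ${<}\delta$-homogeneity system with measures concentrated below $\delta$ yields only ${<}\delta$-absolute complementation. Knowing that $A$ is ${<}\alpha$-$\tn{uB}$ and that there are Woodin cardinals above $\alpha$ does not, by any standard argument, produce trees projecting to complements after collapses of size $\geq\alpha$; indeed the whole point of the hierarchy of local homogeneity pointclasses in the derived model theory is that these levels do not automatically collapse. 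The paper sidesteps this entirely by choosing $\alpha$ so that $V_\alpha\models\zfc$ \emph{and} the universally Baire sets of $V_\alpha$ coincide with those of $V$ (a closing-off argument), so the $A$ produced by the local lemma is universally Baire in $V$ outright. You should either adopt that choice of $\alpha$ or supply an actual upward-propagation argument; as written, the ``stretching'' step is not justified.
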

\begin{proof}
Let $\alpha$ be an ordinal such that
 the following hold:
\begin{itemize}
\item $V_\alpha \models \zfc$
\item The universally Baire sets of reals of $V_\alpha$
 are the same as the universally Baire sets
 of reals of $V$.
\end{itemize}
Using the lemma above
 for $V_\alpha$,
 fix $\beta < \alpha$ and
 $A \subseteq \mathbb{R}$
 that is universally Baire in $V_\alpha$
 such that
 $x$ is $\tn{OD}$ in $L_\beta(A,\mathbb{R})$.
Note that $A$ is universally Baire in $V$.

It suffices to show that the model $L_\beta(A,\mbb{R})$
 is \tn{OD} in $L(A,\mbb{R})$.
 (We cannot simply use $A$ as a parameter to define it.)
What saves us is that since there is a proper class of Woodin
 cardinals and $A$ is universally Baire (in $V$),
 then $L(A,\mathbb{R}) \models \ad$.
This implies that
 $L_\beta(A,\mbb{R})$ is definable
 in $L(A,\mbb{R})$ using $\beta$ and
 the Wadge rank of $A$ as ordinal parameters.
\end{proof}

\begin{lemma}
\label{proj_correctness_omega_logic}
Assume there is a proper class of
 Woodin cardinals.
Let $B$ be a $\tn{uB}$ set of reals.
Then there is a $\tn{uB}$ set of reals $Z$ such that
 whenever $M$ is a countable transitive model
 of $\zfc$ that is strongly $Z$-closed,
 then $$\langle H(\omega_1), \in B \cap M \rangle^M\
 \preceq \langle H(\omega_1), \in, B \rangle.$$
\end{lemma}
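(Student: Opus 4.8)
The plan is to verify the elementarity through the Tarski--Vaught test, finding the required witnesses as leftmost branches of universally Baire scales---exactly the Moschovakis-basis strategy used for the projective correctness of $\tn{HOD}$ earlier in the paper, but now with ``$\tn{uB}$ and coded into $Z$'' playing the role that ``$\tn{OD}$'' played there. First I would fix, for each $n \in \omega$, a set $U_n \subseteq \mbb{R}\times\mbb{R}$ that is universal for the relations on reals which are $\Sigma_n$-definable over $\langle H(\omega_1),\in,B\rangle$ (coding hereditarily countable sets by reals in the usual way). Each such relation is projective in $B$, so since $\Gamma^\infty$ is closed under projections and complements, every $U_n$ is universally Baire; and since $\Gamma^\infty$ is closed under scales, each $U_n$ carries a universally Baire scale $\vec\sigma_n$. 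The set $Z$ will be a single universally Baire set coding $B$ together with the sequence $\langle U_n,\vec\sigma_n : n \in \omega\rangle$. Folding countably many uB sets and scales into one uB set is a technical point; the cleanest route is to take $Z = (B,\mbb{R})^{\#}$, which is universally Baire because $\Gamma^\infty$ is closed under sharps and which uniformly encodes definability over $L(B,\mbb{R})$ (a model of $\ad^+$), thereby uniformly providing the universal sets and their scales.

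With $Z$ so chosen, let $M$ be countable, transitive, a model of $\zfc$, and strongly $Z$-closed. Applying the definition of strong closure with trivial forcing gives $Z \cap M \in M$; in particular $B \cap M \in M$, so the structure $\langle H(\omega_1),\in,B\cap M\rangle^M$ is well defined, and $M$ possesses the reinterpreted universal sets and scales restricted to its own reals. I would then establish the elementarity by induction on formula complexity, the Boolean cases being routine and the only substantive case being the existential quantifier. Suppose $\langle H(\omega_1),\in,B\rangle \models (\exists x)\,\psi(x,\bar a)$ with $\bar a \in H(\omega_1)^M$ and $\psi$ of lower complexity. Then the set $W$ of witnesses is a nonempty section of some $U_n$, hence nonempty and equipped with the uB scale $\vec\sigma_n$; the canonical witness is the leftmost branch $b^*$ of $\vec\sigma_n$ through $W$.

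The crux is to show $b^* \in H(\omega_1)^M$. The leftmost branch is definable from the scale and the parameter $\bar a$, so $M$ computes a candidate $b^*_M$ from $(Z \cap M,\bar a)$, and the content of the argument is that $b^*_M$ is genuinely the leftmost branch of the true scale, hence a genuine witness. This is where the proper class of Woodin cardinals enters: by Woodin's Theorem~\ref{sem_engine}, applied to $L(B,\mbb{R})$ and its reinterpretations, the universal sets and their scales are generically absolute, so the operation ``leftmost branch through the section coded by $\bar a$'' commutes with reinterpretation. Consequently, using the elementarity of the embedding furnished by Theorem~\ref{sem_engine} together with the strong $Z$-closure of $M$, the value $b^*_M$ computed inside $M$ agrees with $b^*$, lies in $H(\omega_1)^M$, and witnesses $\psi(b^*_M,\bar a)$ in $M$'s structure by the inductive hypothesis. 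I expect the main obstacle to be precisely this step: guaranteeing that the canonical witness lands in $M$ \emph{itself} rather than in some larger generic extension $M[g]$. The scale-theoretic definability of the leftmost branch, combined with the generic absoluteness provided by the Woodin cardinals and packaged into $Z$, is exactly what forces the witness down into $M$; making this descent precise---rather than the routine inductive bookkeeping---is where the real work lies.
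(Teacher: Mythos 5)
Your proposal follows essentially the same route as the paper's proof: package universally Baire scales for the relations definable over $\langle H(\omega_1),\in,B\rangle$ into a single uB set $Z$ (using closure of $\Gamma^\infty$ under projections, scales, and sharps), and verify the Tarski--Vaught criterion by extracting witnesses as branches through the resulting trees inside $M$. The only real divergence is at your ``crux'': the paper does not need Theorem~\ref{sem_engine} there, since once $M$ contains a tree $T_\varphi$ projecting to the relevant relation, the tree over a parameter $x \in M$ is illfounded in $V$ and hence, by absoluteness of wellfoundedness, in $M$, and \emph{any} branch $M$ finds projects to a genuine witness because $p[T_\varphi]^M \subseteq p[T_\varphi]^V$ --- so there is no need for the witness to be the true leftmost branch of the full scale.
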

\begin{proof}
Let $\varphi$ be a formula with two real
 number parameters
 about the structure
 $\langle H(\omega_1), \in, B \rangle$.
Let $A \subseteq \mathbb{R} \times \mathbb{R}$
 be $$A = \{ (x,y) : \varphi(x,y) \}.$$
Let $M$ be a countable transitive model of $\zfc$.
Suppose that $M$ contains a tree for $A$.
That is, $T_\varphi \in M$ for some $T_\varphi$ where
 $A = p[T_\varphi]$.
By using leftmost branches,
 for any $x \in \mathbb{R} \cap M$,
 if there exists some $y \in \mathbb{R}$
 such that $\varphi(x,y)$,
 then there will be such a $y$ in $M$.

So now if $M$ contains such a tree for each
 formula $\varphi$ with two free variables,
 then by the Tarski criterion we will have
 what we want.
Finally, we can get $M$ to contain such trees
 because uB sets of reals have uB scales
 (because we are assuming a proper class
 of Woodin cardinals).
\end{proof}

\begin{lemma}
\label{ub_in_cmt_omega_logic_primer}
    Suppose \(A\) is a \tn{uB} set of reals and
    \(\vec\varphi\) and
    \(\vec\psi\)
    are \tn{uB} scales on \(A\) and its complement.
    Let \(R_{\vec\varphi}\) and \(R_{\vec \psi}\) be the corresponding relations on
    \(\omega\times \mathbb R\times \mathbb R\).
    If \(M\) is an \(R_{\vec\varphi}\times R_{\vec \psi}\)-closed countable transitive model of \textnormal{ZFC},
    then \(A \cap M\) is \tn{uB} in \(M\).
    \begin{proof}
        See the proof of \cite[Theorem 2.37]{omega_logic_primer}.
    \end{proof}
\end{lemma}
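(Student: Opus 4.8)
The plan is to manufacture, inside $M$, a complementing pair of trees for $A \cap M$ directly out of the two given scales, and then to check that this complementation survives passage to every forcing extension of $M$. First I would pass from the scales to their trees: the uB scale $\vec{\varphi}$ on $A$ yields its tree of the scale $T$ with $p[T] = A$, and $\vec{\psi}$ yields a tree $S$ with $p[S] = \mathbb{R} \setminus A$. Because both scales are uB, the trees $T$ and $S$ project to complements not merely in $V$ but in every (set) forcing extension of $V$. The relations $R_{\vec{\varphi}}$ and $R_{\vec{\psi}}$ record, at each level $n$, the prewellorderings induced by the norms $\varphi_n$ and $\psi_n$; the hypothesis that $M$ is $R_{\vec{\varphi}} \times R_{\vec{\psi}}$-closed means (in the strong sense of Section~1) that $M$, and each of its generic extensions, contains these prewellordering relations restricted to its own reals.

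The crucial maneuver is to rebuild the trees of the scales internally to $M$. Rather than labelling a branch through a real $x$ by the external norm values $\varphi_n(x)$, which are ordinals that need not lie in $M$, I would label it by the rank of $x$ in the prewellordering coded by $R_{\vec{\varphi}}$ as computed inside $M$. Since a prewellordering of the reals is coded by a relation on reals, these internal ranks are genuine ordinals of $M$, and the resulting trees $T_M, S_M$ belong to $M$. A leftmost-branch argument, exactly as in the proof of Lemma~\ref{proj_correctness_omega_logic}, then shows $p[T_M]^M = A \cap M$ and $p[S_M]^M = (\mathbb{R} \setminus A) \cap M$: for any $x \in \mathbb{R} \cap M$ lying in $A$, its $R_{\vec{\varphi}}$-ranks assemble into an $M$-branch of $T_M$, while conversely any branch of $T_M$ in $M$ descends to a branch of $T$ and so witnesses $x \in p[T] = A$.

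It then remains to verify that $T_M$ and $S_M$ project to complements in every forcing extension $M[g]$. Disjointness is automatic from absoluteness of wellfoundedness: a common branch of $T_M$ and $S_M$ in some $M[g]$ would descend to a common branch of $T$ and $S$, contradicting that the latter project to complements. For completeness, since $M$ is countable I would, for the relevant poset in $M$, take an $M$-generic $g$ lying in a set-forcing extension $V'$ of $V$, so that $M[g] \subseteq V'$ and checking complementation for such generics suffices. Then any $x \in \mathbb{R}^{M[g]}$ satisfies $x \in \mathbb{R}^{V'}$, hence lies in the reinterpretation of $A$ or of its complement in $V'$; by strong $R_{\vec{\varphi}} \times R_{\vec{\psi}}$-closure, $M[g]$ computes the relevant prewellordering rank of $x$, and this rank supplies a branch of $T_M$ or of $S_M$ through $x$ inside $M[g]$. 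Thus $T_M$ and $S_M$ witness that $A \cap M$ is uB in $M$.

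I expect the completeness half of the last step to be the main obstacle. The difficulty is that the honest norm ordinals $\varphi_n(x)$ live outside $M$, so one cannot simply import into $M[g]$ the branch of $T$ through $x$; the entire purpose of passing to internal prewellordering ranks is to obtain a branch whose labels lie in $M[g]$, and it is precisely the strong $R_{\vec{\varphi}} \times R_{\vec{\psi}}$-closure that guarantees $M[g]$ sees enough of each prewellordering to assign such a rank to every one of its reals.
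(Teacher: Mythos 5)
The paper's own proof is only a citation to \cite[Theorem 2.37]{omega_logic_primer}, so you are supplying an argument from scratch; unfortunately the completeness half of your argument, which you correctly identify as the main obstacle, does not go through as written. The tree \(T_M\) you build is the tree of the scale \emph{restricted to \(M\)}: a node of \(T_M\) above \(x\restriction n\) is witnessed only by reals \(y \in A \cap M\) agreeing with \(x\) up to \(n\) and having prescribed \(M\)-computed ranks. Consequently, in any outer model the projection \(p[T_M]\) consists only of reals that are limits of elements of \(A\cap M\) along which the (restricted) norms stabilize. A new real \(x \in M[g]\) lying in the reinterpretation of \(A\) need not be such a limit at all, and knowing the rank of \(x\) in the prewellordering \(R_{\vec\varphi}\cap M[g]\) produces a branch through the tree of the \emph{\(M[g]\)-restricted} scale --- a tree living in \(M[g]\), not the tree \(T_M \in M\). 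The two rank systems do not match up (new reals get inserted into the prewellorderings), and nothing forces \(x\) into \(p[T_M]\). So the pair \((T_M, S_M)\) can fail to project onto all of \(\mathbb{R}^{M[g]}\), which is exactly the property you need.

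The missing idea is the term-relation (names) construction. For each poset \(\mathbb{P}\in M\) one must build trees \emph{in \(M\)} whose nodes incorporate conditions of \(\mathbb{P}\) and names for reals, together with (names for) the norms of the scale restricted to \(M[\dot G]\); strong \(R_{\vec\varphi}\times R_{\vec\psi}\)-closure, combined with a counting/category argument over the countable model \(M\), is what guarantees that a single \(\mathbb{P}\)-name for \(R_{\vec\varphi}\cap M[\dot G]\) exists in \(M\) (below a condition), so that the term-relation tree can be formed inside \(M\) and projects, in \(M[g]\), onto \(A\cap M[g]\). Disjointness is then handled as you suggest (either by Mostowski absoluteness of the wellfoundedness of the amalgamated tree, or by noting that branches of either tree converge, via the lower semicontinuity of the full uB scales, into the reinterpretations of \(A\) and of its complement, which are disjoint). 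Your restricted-to-\(M\) trees do establish that \(A\cap M\) is Suslin and co-Suslin \emph{in \(M\)}, but not that this complementation persists to generic extensions of \(M\); that persistence is precisely what the name construction in the cited proof of Theorem 2.37 of the \(\Omega\)-logic primer is for.
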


Let us go on a brief tangent and give a result
 which shows how $\Omega$-reals fit into the
 bigger picture:
\begin{proposition}
\label{reals_in_mice_are_omega_reals}
Let $\mc{M}$ be a countable premouse that has
 a universally Baire $\omega_1$-iteration strategy
 $\Sigma \subseteq \mbb{R}$.
Let $x \in \mbb{R} \cap \mc{M}.$
Then $\tn{ZFC} \proves_\Omega (x \in \tn{HOD})$ and hence
 $$\tn{ZFC} + \mbox{``there exists a Woodin cardinal''}
 \proves_\Omega (x \in \tn{HOD}).$$
\end{proposition}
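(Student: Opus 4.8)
The plan is to exhibit a single universally Baire set $A$ that witnesses $\zfc \proves_\Omega (x \in \tn{HOD})$ in the sense of the definition: I must arrange that every countable transitive model $M \models \zfc$ which is strongly $A$-closed satisfies $M \models (\zfc \models_\Omega (x \in \tn{HOD}))$. Since $\Sigma$ is universally Baire, I would first fix universally Baire scales $\vec{\varphi}$ and $\vec{\psi}$ on $\Sigma$ and its complement (using that $\Gamma^\infty$ is closed under scales), and let $A$ code the relations $R_{\vec{\varphi}}$ and $R_{\vec{\psi}}$ of Lemma~\ref{ub_in_cmt_omega_logic_primer} together with a tree whose leftmost branch codes the pair $(\mathcal{M}, x)$. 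Then for any strongly $A$-closed countable transitive $M \models \zfc$, strong $A$-closure gives the closure hypothesis of Lemma~\ref{ub_in_cmt_omega_logic_primer}, so $\Sigma_M := \Sigma \cap M$ is universally Baire in $M$; and the leftmost-branch argument in the proof of Lemma~\ref{proj_correctness_omega_logic} puts $\mathcal{M}$ and $x$ into $M$ and guarantees that $\Sigma_M$ is an $\omega_1$-iteration strategy for $\mathcal{M}$ in $M$.

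It then remains to verify that such an $M$ satisfies $\zfc \models_\Omega (x \in \tn{HOD})$. Unwinding the definition of $\models_\Omega$, this asks that whenever $G$ is generic over $M$ and $(V_\alpha)^{M[G]} \models \zfc$, then $x \in \tn{HOD}^{(V_\alpha)^{M[G]}}$. The mechanism I would use is that $\Sigma_M$, being universally Baire in $M$, reinterprets canonically in every forcing extension $M[G]$ and there remains an $\omega_1$-iteration strategy for $\mathcal{M}$; in particular $\mathcal{M}$ stays iterable in $M[G]$. By the comparison lemma the countable iterable premice are linearly preordered, and $\mathcal{M}$ is located in this preorder by first-order information, so $\mathcal{M}$---and hence the real $x$ that $\mathcal{M}$ computes---is definable from ordinal parameters (essentially a Wadge rank of the reinterpreted strategy) in $(V_\alpha)^{M[G]}$. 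This yields $x \in \tn{HOD}^{(V_\alpha)^{M[G]}}$ for every relevant $\alpha$ and $G$, which is what is needed.

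The step I expect to be the main obstacle is the claim in the previous paragraph that $x$ is \emph{genuinely} ordinal definable, rather than merely definable from the real $\Sigma$, and uniformly across all rank-initial-segments of all forcing extensions of $M$. The universal Baireness of $\Sigma_M$ in $M$ is exactly what should supply this, since a uB strategy is guided by ordinal-definable data (a tree representation) that persists generically, so the iterations needed to run a comparison are available and ordinal definable inside each $(V_\alpha)^{M[G]}$. Care is required to see that the comparison terminates and that $\mathcal{M}$ is pinned down by first-order data up to the mouse order, so that no real parameter is covertly needed; this is where the canonicity of mice does the essential work. Finally, the second assertion of the proposition should be immediate by monotonicity: replacing $\zfc$ by $\zfc + \mbox{``there exists a Woodin cardinal''}$ only shrinks the class of countable transitive models to be tested and only strengthens the relation $\models_\Omega$, so the same $A$ continues to witness $\Omega$-provability.
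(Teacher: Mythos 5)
Your proposal is correct and follows essentially the same route as the paper: both pass to a universally Baire set built from uB scales on $\Sigma$ and its complement so that strong closure under it makes $\Sigma\cap M$ a universally Baire iteration strategy for $\mc{M}$ inside $M$ (this is exactly Lemma~\ref{ub_in_cmt_omega_logic_primer}), and then invoke the standard mouse-order/comparison argument to get ordinal definability of $x$. If anything, you are more careful than the paper's proof at the last step, where you explicitly unwind $M \models (\zfc \models_\Omega (x\in\tn{HOD}))$ over all rank initial segments of all generic extensions of $M$ rather than just concluding $x\in\tn{HOD}^M$.
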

\begin{proof}
Using Lemma~\ref{ub_in_cmt_omega_logic_primer},
 fix a universally Baire set $\Sigma' \subseteq \mbb{R}$
 such that any countable transitive model $N$ of $\zfc$
 which is $\Sigma'$-closed is also
 $\Sigma$-closed and $\Sigma \cap N$
 is universally Baire in $N$.
We claim that $\Sigma'$ is an $\Omega$ proof
 that $\zfc \proves_\Omega (x \in \tn{HOD})$.

To see why, let $N$ be any countable transitive
 model of ZFC which is $\Sigma'$-closed.
Since $N$ is also $\Sigma$-closed,
 by definition $\Sigma \cap N \in N$.
This implies that $\Sigma \cap N$
 is an $\omega_1$-iteration strategy for $\mc{M}$,
 from the point of view of $N$.
But also $\Sigma \cap N$ is universally Baire in $N$.
So $N$ satisfies that $\mc{M}$ has
 a universally Baire $\omega_1$-iteration strategy,
 which by standard inner model theory arguments
 implies that $\mc{M}$ is
 $(\omega_1 + 1)$-iterable in $N$,
 and hence every real in $\mc{M}$ is $\tn{OD}$ in $N$.
\end{proof}

\begin{lemma}
\label{od_in_lar_implies_omega_real}
Assume there is a proper class
 of Woodin cardinals.
Let $x \in \mathbb{R}$.
Suppose that $A$ is a universally Baire
 set of reals such that
 $x$ is \tn{OD} in $L(A,\mathbb{R})$.
Then $\tn{ZFC}$ $+$ ``there exists a Woodin cardinal''
 $\proves_\Omega (x \in \tn{HOD})$.
\end{lemma}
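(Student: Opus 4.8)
The plan is to produce an explicit universally Baire ``$\Omega$-proof'' $Z$ and to verify, for every countable transitive $M \models T$ that is strongly $Z$-closed (where $T$ is $\zfc$ together with ``there exists a Woodin cardinal''), that $M \models (T \models_\Omega x \in \tn{HOD})$; that is, that every rank initial segment $(V_\alpha)^{M[H]}$ of every set-forcing extension of $M$ which models $T$ satisfies $x \in \tn{HOD}$. By the soundness of $\Omega$-logic this in particular recovers $x \in \tn{HOD}^V$, so the content is to make the OD-ness of $x$ inside $L(A,\mbb{R})$ visible, uniformly and forcing-robustly, in the rank-segments of the forcing extensions of every such $M$.

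First I would repackage the hypothesis so that it no longer mentions large ordinal parameters. Since $A$ is $\tn{uB}$ and there is a proper class of Woodin cardinals, $L(A,\mbb{R}) \models \tn{AD}^+$. Let $\gamma < \Theta^{L(A,\mbb{R})}$ be least with $x$ ordinal definable in $L_\gamma(A,\mbb{R})$; this structure is coded by a set of reals in $L(A,\mbb{R})$. Fixing $\tn{uB}$ scales $\vec\varphi,\vec\psi$ on $A$ and its complement and applying the $\Sigma^2_1$ Basis Theorem (Fact~\ref{sigma_2_1_reflection}) inside $L(A,\mbb{R})$, I obtain a $\Delta^2_1(\vec\varphi,\vec\psi)$ set of reals $E$ coding a structure $L_\beta(B,\mbb{R})$ in which $x$ is ordinal definable; closure of $\Gamma^\infty$ under projections makes $E$ and $B$ universally Baire. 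The crucial point, exactly as in Lemma~\ref{ultl_reals_in_hod_lar}, is that the coded structure is ordinal definable inside the $\tn{AD}$-model from the Wadge rank of $A$ together with $\beta$, so the witness to $x$'s definability is a \emph{real-coded}, projectively presented object rather than an arbitrary tuple of ordinals below $\Theta$, and is therefore amenable to reflection into countable models.

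Next I would build $Z$. Using closure of $\Gamma^\infty$ under sharps and scales, let $Z$ be a universally Baire set coding $E$, universally Baire scales on $E$ and its complement, the sharps $(A,\mbb{R})^\#$ and $(E,\mbb{R})^\#$, and the reflection witnesses supplied by Lemma~\ref{proj_correctness_omega_logic} and by the scale-relation closure of Lemma~\ref{ub_in_cmt_omega_logic_primer} for $A$ and $E$. The role of $Z$ is that any strongly $Z$-closed countable transitive $M$ has $A \cap M$ and $E \cap M$ universally Baire in $M$, correctly computes the theories of $L(A\cap M,\mbb{R})^M$ and $L(E \cap M,\mbb{R})^M$ via the sharps, and satisfies $\langle H(\omega_1), \in, E\cap M\rangle^M \preceq \langle H(\omega_1), \in, E\rangle$. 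Fixing such an $M \models T$, a forcing extension $M[H]$, and $\alpha$ with $(V_\alpha)^{M[H]} \models T$, I apply Theorem~\ref{sem_engine} inside $M$ to see that the reinterpretation $E^*$ of $E \cap M$ in $M[H]$ still codes a structure $L_\beta(B^*,\mbb{R})$ in which $x$ is ordinal definable, and that this persists into $(V_\alpha)^{M[H]}$; invoking the Wadge-rank definability once more inside the $\tn{AD}$-model $L(B^*,\mbb{R})$, $x$ becomes ordinal definable there from the Wadge rank of $B^*$, which transfers to $x \in \tn{HOD}^{(V_\alpha)^{M[H]}}$. As $M$, $H$, and $\alpha$ were arbitrary, $Z$ witnesses $T \proves_\Omega (x \in \tn{HOD})$.

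The hard part will be eliminating the non-ordinal-definable parameter $A$: being ordinal definable in $L(A,\mbb{R})$ does not on its own give ordinal definability in $V$, precisely because $A$ need not be $\tn{OD}$. This is what the Wadge-rank absorption is for — inside the $\tn{AD}$-model the set $A$ is traded for its Wadge rank, an honest ordinal — and it is what forces the argument through the $\Sigma^2_1$ Basis Theorem (to descend below $\Theta$ to a real-coded, reflectable witness) and through the sharps together with the scale-relation closure (so that strongly $Z$-closed models compute the theory of $L(A,\mbb{R})$ and its reinterpretations correctly). A secondary point I would have to check is that the single Woodin cardinal available in $M$ suffices for the fragment of $\tn{AD}^+$ needed to run the Wadge-rank definability; this I expect to handle by carrying the required homogeneity and sharp information inside $Z$ and verifying everything within the rank-segments $(V_\alpha)^{M[H]}$ rather than in $M$ itself.
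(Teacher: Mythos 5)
Your proposal follows essentially the same route as the paper: encode the fact that $x$ is $\tn{OD}$ in $L(A,\mathbb{R})$ into universally Baire certificates (the sharp of $(A,\mathbb{R})$, the scale relations for $A$, and the elementarity witnesses of Lemma~\ref{proj_correctness_omega_logic}), so that any strongly closed countable transitive $M$ sees $x \in \tn{HOD}^{L(A\cap M,\mathbb{R})}$ with $A \cap M$ universally Baire in $M$, and then use the Woodin cardinal $\delta$ of $M$ to render $L(A\cap M,\mathbb{R})$ ordinal definable via the Wadge rank of $A\cap M$ in the ${\le}\delta$-uB hierarchy. The one step that diverges is your appeal to Theorem~\ref{sem_engine} \emph{inside} $M$: its hypothesis (closure of $\Gamma^\infty$ under scales, sharps, and projections) is a proper-class-of-Woodins assumption that a model of $\zfc$ plus one Woodin cardinal need not satisfy, and the paper avoids this by routing all the needed generic absoluteness through the sharp $(A,\mathbb{R})^\#$ packaged into the certificate --- which is precisely the repair you sketch in your final paragraph, so this is a self-identified and fixable wrinkle rather than a gap. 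Your preliminary descent to a real-coded $L_\beta(B,\mathbb{R})$ via the $\Sigma^2_1$ Basis Theorem is harmless but unnecessary for this lemma; the paper reserves that device for deriving the hypothesis ``$x$ is $\tn{OD}$ in $L(A,\mathbb{R})$ for some universally Baire $A$'' from the axiom ``$V = \tn{Ultimate}$ $L$'' (Lemma~\ref{ultl_reals_in_hod_lar} and the lemma preceding it).
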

\begin{proof}
Let $B := (A,\mathbb{R})^\#$.
Notice that the statement
 $$x \in \tn{HOD}^{L(A,\mathbb{R})}$$
 is encoded into $B$.
Using Lemma~\ref{proj_correctness_omega_logic},
let $Z_1$ be a uB set
 of reals such that whenever $M$ is strongly $Z_1$-closed,
 then $$\langle H(\omega_1), \in B \cap M \rangle^M\
 \preceq \langle H(\omega_1), \in, B \rangle,$$
 and so
 $$M \models x \in \tn{HOD}^{L(A \cap M,\mathbb{R})}.$$
Here we are using $B \cap M$
 to recover $A \cap M$.

Next let $\vec{\varphi}$ and $\vec{\psi}$ be uB
 scales on A and its complement.
Let $Z_2 := Z_1 \times R_{\vec{\varphi}}
 \times R_{\vec{\psi}}$-closed.
Certainly $Z_2$ is uB.

Let $M$ be a countable transitive
 model of the theory 
 $\zfc$ + ``there exists a Woodin cardinal''
 such that $M$ is strongly $Z_2$-closed.
Let $\bar{A} := A \cap M$.
Since $M$ is strongly $Z_1$-closed,
 we have
 $$M \models x \in \tn{HOD}^{L(\bar{A},\mathbb{R})}.$$
Since $M$ is strongly
 $R_{\vec{\varphi}}
 \times R_{\vec{\psi}}$-closed,
 so by Lemma~\ref{ub_in_cmt_omega_logic_primer}
 we get that $\bar{A}$ is uB in M.

Now since $M$ has a Woodin cardinal $\delta$,
 the model $L(\bar{A},\mathbb{R})$ of $M$
 is definable in $M$ using an ordinal as a parameter.
That is, $M$ can define its hierarchy
 of ${\le}\delta$-uB sets of reals,
 and the $L(\bar{A},\mathbb{R})$ model
 is equal to any $L(C,\mathbb{R})$ model
 where $C$ is any ${\le}\delta$-uB set of reals
 of the same Wadge rank as $\bar{A}$.

Since the model $L(\bar{A},\mathbb{R})$ of $M$
 is OD in $M$, it follows that $x$ is OD in $M$.
This is what we wanted to show.
\end{proof}


Here is an alternate way to prove
 something weaker, but we think the
 proof is worth including:

\begin{lemma}
\label{in_lar_to_omega_valid}
Assume there is a proper class
 of Woodin cardinals.
Let $x$ be a real.
Suppose $A$ is a universally Baire
 set of reals such that
 $x$ is $\tn{OD}$ in $L(A,\mathbb{R})$.
Then $\zfc$ $+$ ``there exists a Woodin
 cardinal'' $\models_\Omega (x \in \tn{HOD})$.
\end{lemma}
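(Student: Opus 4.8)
The plan is to verify the definition of $\models_\Omega$ directly, replacing the strongly $Z$-closed countable models of Lemma~\ref{od_in_lar_implies_omega_real} by Woodin's global embedding from Theorem~\ref{sem_engine}. So fix a set forcing extension $V[G]$ together with an ordinal $\alpha$ such that $N := (V_\alpha)^{V[G]}$ satisfies $\zfc + {}$``there is a Woodin cardinal $\delta$''; we must show $x \in \tn{HOD}^N$. Since $V$ has a proper class of Woodin cardinals, so does $V[G]$, so $\Gamma^\infty$ as computed in $V[G]$ is strongly determined and closed under scales, sharps, and projections. Let $A^*$ be the canonical reinterpretation of $A$ in $V[G]$, which is universally Baire there, and by Theorem~\ref{sem_engine} fix an elementary embedding $j : L(A,\mbb{R}) \to L(A^*,\mbb{R})^{V[G]}$ with $j(A) = A^*$.

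The first key point is that $j$ is the identity on $\mbb{R} \cap V$: if $r \in \baire \cap V$, then for all $n,m \in \omega$ we have $r(n) = m$ if and only if $j(r)(n) = m$, because $j$ fixes each natural number, so $j(r) = r$; in particular $j(x) = x$. Since $x$ is $\tn{OD}$ in $L(A,\mbb{R})$ and this model satisfies $\ad^+$, a standard condensation argument inside $L(A,\mbb{R})$ produces a least ordinal $\gamma < \Theta^{L(A,\mbb{R})}$ such that $x$ is definable over $L_\gamma(A,\mbb{R})$ from ordinal parameters below $\gamma$. Applying $j$ and using $j(x) = x$, the real $x$ is definable over $L_{j(\gamma)}(A^*,\mbb{R})^{V[G]}$ from ordinal parameters below $j(\gamma)$, where $j(\gamma) < \Theta^{L(A^*,\mbb{R})^{V[G]}}$.

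Next I would confirm that everything reflects below $\alpha$. Because $N \models \zfc$ and $\mbb{R}^N = \mbb{R}^{V[G]}$, the model $N$ contains $\mathcal{P}(\mbb{R})$, so $(2^{\aleph_0})^{+}$ is an ordinal below $\alpha$; as every prewellordering of $\mbb{R}$ in $L(A^*,\mbb{R})^{V[G]}$ has length below $(2^{\aleph_0})^{+}$, we get $j(\gamma) < \Theta^{L(A^*,\mbb{R})^{V[G]}} \le (2^{\aleph_0})^{+} < \alpha$. Since $A^* \in \mathcal{P}(\mbb{R}) \subseteq N$ and the $L$-hierarchy is absolute, the structure $L_{j(\gamma)}(A^*,\mbb{R})$ is an element of $N$ and $x$ is $\tn{OD}$ over it as computed inside $N$. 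It remains to see that $L_{j(\gamma)}(A^*,\mbb{R})$ is itself $\tn{OD}$ in $N$, and here the argument is exactly that of Lemma~\ref{od_in_lar_implies_omega_real}: the set $A^*$ is $\le\delta$-uB in $N$ (witnessing trees of size below $\alpha$ can be extracted in $V[G]$ and, being elements of $N$, still project to complements after $\tn{Col}(\omega,\delta)$ over $N$), so $N$ can define its hierarchy of $\le\delta$-uB sets of reals, whence $L_{j(\gamma)}(A^*,\mbb{R})$ coincides with $L_{j(\gamma)}(C,\mbb{R})$ for any $\le\delta$-uB set $C$ of the same Wadge rank and is therefore $\tn{OD}$ in $N$ from $j(\gamma)$ and the Wadge rank of $A^*$. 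Composing the two definitions shows $x$ is $\tn{OD}$ in $N$, i.e.\ $x \in \tn{HOD}^N$; as $V[G]$ and $\alpha$ were arbitrary, $\zfc + {}$``there is a Woodin cardinal'' $\models_\Omega (x \in \tn{HOD})$.

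I expect the main obstacle to be the bookkeeping that forces everything below $\alpha$: namely checking that $A^*$ is genuinely $\le\delta$-uB as computed inside the \emph{set} model $N$ rather than merely in $V[G]$, and that $j$ moves the least level $\gamma$ at which $x$ becomes $\tn{OD}$ to an ordinal still below the height of $N$. The conceptual heart, that $L(A^*,\mbb{R})$-style models are ordinal definable in any model possessing a Woodin cardinal, is inherited verbatim from Lemma~\ref{od_in_lar_implies_omega_real}; the genuinely new ingredient here is that replacing strong $Z$-closure of countable models by the single global embedding $j$ obliges us to check that $j$ fixes the ground model reals.
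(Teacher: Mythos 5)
Your proposal is correct and follows essentially the same route as the paper's proof: apply Theorem~\ref{sem_engine} to get $j$ with $j(x)=x$, use condensation to find a level of the $L(A^*,\mathbb{R})^{V[G]}$-hierarchy below $\Theta$ (hence below $\alpha$) where $x$ is ordinal definable, and then use the Woodin cardinal $\delta$ of $(V_\alpha)^{V[G]}$ to make that level ordinal definable there via the $\le\delta$-uB hierarchy and the Wadge rank of $A^*$. The only cosmetic difference is that you apply condensation in $V$ and push the least level through $j$, whereas the paper applies condensation directly in $V[G]$ after transferring ordinal definability via $j$; these are equivalent by elementarity.
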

\begin{proof}
Let $V[G]$ be a forcing extension of $V$.
Let $A^*$ be the reinterpretation
 of $A$ in $V[G]$.
Since there is a proper class of Woodin
 cardinals, fix an elementary embedding
 $$j : L(A,\mathbb{R}) \to
 L(A^*,\mathbb{R})^{V[G]}.$$
Note that $j(x) = x$,
 so since $j$ is elementary we get that
 $$x \mbox{ is } \tn{OD} \mbox{ in }
 L(A^*,\mathbb{R})^{V[G]}.$$
By condensation, fix
 the smallest ordinal
 $\beta < (\Theta^{L(A^*,\mathbb{R})})^{V[G]}$
 such that
 $$x \mbox{ is } \tn{OD} \mbox{ in }
 L_\beta(A^*,\mathbb{R})^{V[G]}.$$
Now fix an ordinal $\alpha$
 such that $$(V_\alpha)^{V[G]}
 \models \mbox{ there exists a Woodin cardinal.}$$
We must show that
 $(V_\alpha)^{V[G]} \models
 (x \in \tn{HOD})$.
Because $V_\alpha \models \zfc$,
 we have the $\Theta$ of $L(A^*,\mathbb{R})^{V[G]}$
 is $< \alpha$,
 and hence
 $$\beta < \alpha.$$

Since $A^*$ is universally Baire in $V[G]$,
 it follows that $A^*$ is universally Baire
 in $(V_\alpha)^{V[G]}$.
However there might be sets of reals
 that are universally Bare in $(V_\alpha)^{V[G]}$
 that are \textit{not}
 universally Baire in $V[G]$.
It is also not clear how to define
 the collection of universally Baire
 sets of reals of $V[G]$
 from inside $(V_\alpha)^{V[G]}$.

This is where the Woodin cardinal of
 $(V_\alpha)^{V[G]}$
 comes in.
Let $\delta < \alpha$
 be the least Woodin cardinal of
 $(V_\alpha)^{V[G]}$
 and therefore of $V[G]$ as well.
Note that $\delta$ is definable in
 $(V_\alpha)^{V[G]}$.
Let $(\Gamma^\infty)^{V[G]}$ be the collection
 of universally Baire sets of reals
 $V[G]$ and let $(\Gamma^\delta)^{V[G]}$
 be the collection of
 ${\le} \delta$-universally Baire sets of reals
 of $V[G]$.
Note that $(\Gamma^\delta)^{V[G]}$
 is definable in $(V_\alpha)^{V[G]}$.

Because $\delta$ is Woodin,
 we get that $(\Gamma^\infty)^{V[G]}$ is a Wadge initial
 segment of $(\Gamma^\delta)^{V[G]}$.
Thus the model $L_\beta(A^*,\mathbb{R})^{V[G]}$
 is definable in $(V_\alpha)^{V[G]}$
 using only $\beta$ and the Wadge rank
 $\gamma$ of $A^*$ as parameters.
More precisely,
 $(V_\alpha)^{V[G]}$ satisfies that
 $L_\beta(A^*,\mathbb{R})$
 is the unique model of the form
 $L_\beta(B,\mathbb{R})$
 where $B$ is any set in $(\Gamma^\delta)^{V[G]}$
 of Wadge rank $\gamma$.

Finally, since
 $x$ is $\tn{OD}$
 in $L_\beta(A^*,\mathbb{R})^{V[G]}$
 and that model is $\tn{OD}$ in
 $(V_\alpha)^{V[G]}$,
 it follows that
 $(V_\alpha)^{V[G]} \models
 (x \in \tn{HOD})$.
\end{proof}

The axiom ``$V$ = \tn{Ultimate} $L$''
 thus implies that every real number
 is an $\Omega$-real:

\begin{theorem}[Woodin]
\label{ult_l_implies_exery_real_is_omega}
Assume the axiom ``$V$ = \tn{Ultimate} $L$''.
Let $x \in \mathbb{R}$.
Then $\zfc$ $+$ ``there exists a Woodin
 cardinal'' $\proves_\Omega (x \in \tn{HOD})$.
\end{theorem}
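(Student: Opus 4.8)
The plan is to observe that this theorem is simply the composition of the two preceding lemmas, so the proof requires no new ideas beyond checking that their hypotheses are met. First I would note that the axiom ``$V$ = Ultimate $L$'' carries with it a proper class of Woodin cardinals (this is built into the formulation of the axiom and is exactly what was used in the proof of the lemma preceding Lemma~\ref{ultl_reals_in_hod_lar}, where we invoked ``because we have a proper class of Woodin cardinals''). Thus both the hypotheses of Lemma~\ref{ultl_reals_in_hod_lar} and the standing large-cardinal hypotheses of Lemma~\ref{od_in_lar_implies_omega_real} are available.

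Next I would apply Lemma~\ref{ultl_reals_in_hod_lar} to the given real $x$. This yields a universally Baire set of reals $A \subseteq \mathbb{R}$ such that
\[
x \text{ is } \tn{OD} \text{ in } L(A,\mathbb{R}).
\]
This is the step that genuinely uses ``$V$ = Ultimate $L$'', via the reflection of the relevant $\Sigma_2$ statement into $\tn{HOD}^{L(A,\mathbb{R})}$ for a universally Baire $A$.

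Finally I would feed this same $A$ into Lemma~\ref{od_in_lar_implies_omega_real}. Its three hypotheses---that there is a proper class of Woodin cardinals, that $A$ is universally Baire, and that $x$ is $\tn{OD}$ in $L(A,\mathbb{R})$---are all now in hand. Its conclusion is precisely
\[
\zfc + \text{``there exists a Woodin cardinal''} \proves_\Omega (x \in \tn{HOD}),
\]
which is the statement of the theorem.

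There is essentially no obstacle at the level of this theorem itself: all the substantive work has been discharged in the two lemmas, one of which (Lemma~\ref{ultl_reals_in_hod_lar}) extracts a universally Baire set coding the ordinal-definability of $x$ from the Ultimate $L$ reflection principle, and the other of which (Lemma~\ref{od_in_lar_implies_omega_real}) builds the actual $\Omega$-proof by using the sharp $(A,\mathbb{R})^\#$ together with projective correctness in strongly closed models to certify, inside any countable strongly closed model $M$ with a Woodin cardinal, that $L(A\cap M,\mathbb{R})$ is $\tn{OD}$ and hence that $x$ is $\tn{OD}$ there. The only point worth stating explicitly, and the one I would flag in the write-up, is the implication from the axiom to the proper class of Woodin cardinals, since without it neither lemma applies.
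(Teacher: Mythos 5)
Your proposal is correct and is essentially identical to the paper's own proof, which simply applies Lemma~\ref{ultl_reals_in_hod_lar} to obtain the universally Baire $A$ with $x$ ordinal definable in $L(A,\mathbb{R})$ and then feeds it into Lemma~\ref{od_in_lar_implies_omega_real}. Your explicit remark that the axiom ``$V = $ Ultimate $L$'' supplies the proper class of Woodin cardinals needed for both lemmas is a reasonable point to make, even though the paper leaves it implicit.
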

\begin{proof}
By Lemma~\ref{ultl_reals_in_hod_lar},
 fix a universally Baire set of reals
 $A \subseteq \mathbb{R}$ such that
 $x$ is $\tn{OD}$ in $L(A,\mathbb{R})$.
By Lemma~\ref{od_in_lar_implies_omega_real}
 this implies that $x$ is an $\Omega$-real.
\end{proof}

Here is how this section relates
 to our main investigation:

\begin{proposition}
\label{phi2_implies_super_abs_fail}
Assume there is a Woodin cardinal $\delta$
 below an ordinal $\alpha$
 where $V_\alpha \models \zfc$.
Assume that
 for each $x \in \mathbb{R}$,
 $$\tn{ZFC} + \mbox{``there exists a Woodin cardinal''}
 \proves_\Omega (x \in \tn{HOD}).$$
Then generic
 $(\forall^\baire)(\Sigma^2_1)^{\tn{uB}}$
 absoluteness fails.
\end{proposition}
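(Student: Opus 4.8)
The plan is to exploit the fact that the hypothesis is \emph{itself} a $(\forall^\baire)(\Sigma^2_1)^{\tn{uB}}$ statement, and to produce a single forcing extension in which it fails. Write $\Phi_2$ for the assumed hypothesis and let $T$ abbreviate $\zfc$ together with ``there exists a Woodin cardinal''. Concretely, I would force with an $\tn{OD}$ homogeneous poset $\mbb{P}$ that adds a real --- Cohen forcing will do --- to obtain $V[G]$, and show that although $\Phi_2$ holds in $V$ by assumption, it fails in $V[G]$. Exhibiting a single $(\forall^\baire)(\Sigma^2_1)^{\tn{uB}}$ statement that is true in $V$ but false in a set forcing extension is exactly a failure of generic $(\forall^\baire)(\Sigma^2_1)^{\tn{uB}}$ absoluteness.

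First I would record that $\Phi_2$ has the right syntactic shape. Unwinding the definition of $\Omega$-provability, the statement $T \proves_\Omega (x \in \tn{HOD})$ asserts that there exists $A \in \Gamma^\infty$ such that every countable transitive model $M$ of $T$ that is strongly $A$-closed satisfies $M \models (T \models_\Omega (x \in \tn{HOD}))$. The leading quantifier ``$\exists A \in \Gamma^\infty$'' is precisely the $\Sigma^2_1$ existential over universally Baire sets, and the matrix is a property of $\langle H(\omega_1), \in, A\rangle$ with real parameter $x$: it quantifies over reals coding countable transitive models $M$, the strong $A$-closure condition is expressible from $A$ inside $H(\omega_1)$, and $M \models (T \models_\Omega \varphi)$ is arithmetic in the code of $M$. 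Thus each instance is $(\Sigma^2_1)^{\tn{uB}}$ with parameter $x$, and universally quantifying over $x \in \baire$ shows $\Phi_2$ is $(\forall^\baire)(\Sigma^2_1)^{\tn{uB}}$.

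Next comes the forcing argument. Since $\delta < \alpha$ is Woodin in $V$ and Cohen forcing is small, $\delta$ remains Woodin in $V[G]$; moreover $(V_\alpha)^{V[G]} = (V_\alpha)^V[G] \models \zfc$ and still has the Woodin cardinal $\delta < \alpha$ below it, so $(V_\alpha)^{V[G]} \models T$. Fix a real $x \in V[G] \setminus V$. Suppose toward a contradiction that $V[G] \models \Phi_2$; applying this to $x$ gives $V[G] \models (T \proves_\Omega (x \in \tn{HOD}))$, whence by the soundness of $\Omega$-logic $V[G] \models (T \models_\Omega (x \in \tn{HOD}))$. Now $(V_\alpha)^{V[G]}$ is a rank initial segment of the trivial forcing extension $V[G]$ of $V[G]$ that satisfies $T$, so by the definition of $\models_\Omega$ we get $(V_\alpha)^{V[G]} \models (x \in \tn{HOD})$. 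Hence $x$ is ordinal definable in $(V_\alpha)^{V[G]}$, and therefore ordinal definable in $V[G]$ using $\alpha$ as a parameter, i.e. $x \in \tn{HOD}^{V[G]}$. But $\mbb{P}$ is $\tn{OD}$ and homogeneous, so by Fact~\ref{when_v_grows_homog_hod_shrinks} the model $\tn{HOD}^{V[G]}$ is a ground of $\tn{HOD}$, whence $\tn{HOD}^{V[G]} \subseteq \tn{HOD} \subseteq V$. This forces $x \in V$, contradicting the choice of $x$. Therefore $V[G] \not\models \Phi_2$.

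Combining the two halves, $\Phi_2$ is a $(\forall^\baire)(\Sigma^2_1)^{\tn{uB}}$ statement that is true in $V$ but false in the set forcing extension $V[G]$, so generic $(\forall^\baire)(\Sigma^2_1)^{\tn{uB}}$ absoluteness fails. The main obstacle I anticipate is the bookkeeping in the soundness step: one must ensure that a genuine \emph{rank initial segment} of $V[G]$ satisfying $T$ is available (this is precisely where the hypothesis that $V_\alpha \models \zfc$ with $\delta < \alpha$ Woodin is used, together with the preservation of Woodinness by the small forcing $\mbb{P}$), and that ``$x \in \tn{HOD}$'' holding in that initial segment genuinely transfers up to $x \in \tn{HOD}^{V[G]}$ before the homogeneity of $\mbb{P}$ pulls $x$ back down into $V$.
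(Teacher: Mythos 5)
Your proposal is correct and follows essentially the same route as the paper's proof: the paper argues by assuming generic absoluteness and deriving a contradiction, while you directly show $\Phi_2$ fails in the Cohen extension, but the core steps (homogeneity giving $\tn{HOD}^{V[G]}\subseteq V$, soundness of $\Omega$-logic, and the small forcing preserving the Woodin cardinal $\delta<\alpha$ so that $(V_\alpha)^{V[G]}$ models the theory) are identical. Your explicit verification that $\Phi_2$ has the $(\forall^\baire)(\Sigma^2_1)^{\tn{uB}}$ form is a detail the paper only asserts, and it is a welcome addition.
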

\begin{proof}
Assume, towards a contradiction,
 that this generic absoluteness holds.
Let $V[G]$ be a forcing extension of $V$
 such that
\begin{itemize}
\item[1)] The forcing is homogeneous.
\item[2)] There is a real in $V[G]$ not in $V$.
\item[3)] The forcing has cardinality $< \delta$.
\end{itemize}
Note that Cohen forcing suffices for this.
By 1) we have
 $$\tn{HOD}^{V[G]} \subseteq \tn{HOD} \subseteq V.$$
Fix a real $x \in V[G]$ not in $V$.
Thus $x \not\in \tn{HOD}^{V[G]}$.
By our generic absoluteness hypothesis,
 in $V[G]$ we have
  $$\zfc + \mbox{``there exists a Woodin cardinal''}
 \proves_\Omega (x \in \tn{HOD}).$$
By the soundness of $\Omega$ logic
 we have (in $V[G]$)
 $$\zfc + \mbox{``there exists a Woodin cardinal''}
 \models_\Omega (x \in \tn{HOD}).$$
Note that $\delta < \alpha$ is Woodin in $V[G]$,
 so $(V_\alpha)^{V[G]} \models
 \zfc + $``there exists a Woodin cardinal''.
Thus $$(V_\alpha)^{V[G]} \models (x \in \tn{HOD})$$
 and so $$V[G] \models (x \in \tn{HOD}).$$
This is a contradiction.
\end{proof}

Hence by the theorem above,
 the axiom ``$V$ = Ultimate $L$'' implies
 that generic $(\forall^\baire)(\Sigma^2_1)^{\tn{uB}}$
 absoluteness fails.
Thus, assuming the Ultimate $L$ \textit{conjecture},
 no conventional large cardinal hypothesis
 can imply generic
 $(\forall^\baire)(\Sigma^2_1)^{\tn{uB}}$
 absoluteness.

\end{document}